\newtheorem{theorem}{Theorem}[section]
\newtheorem{lemma}[theorem]{Lemma}
\newtheorem{proposition}[theorem]{Proposition}
\newtheorem{hypothesis}[theorem]{Hypothesis} 
\newtheorem{Hypothesis}{Hypothesis}
\newtheorem{remark}[theorem]{Remark}
\numberwithin{equation}{section}
\def\qed{{\hfill\hbox{\enspace${ \square}$}} \smallskip}
\def\sqr#1#2{{\vcenter{\vbox{\hrule height .#2pt \hbox{\vrule
 width .#2pt height#1pt \kern#1pt \vrule
width .#2pt} \hrule height .#2pt}}}}
\def\square{\mathchoice\sqr54\sqr54\sqr{4.1}3\sqr{3.5}3}
\def\ds{\begin{displaystyle}}
\def\eds{\end{displaystyle}}
\def\dis{\displaystyle }
\def\<{\langle }
\def\>{\rangle }
\def\dd{\mathcal D}
\def\dim{\noindent \hbox{{\bf Proof.} }}
\def\R{\mathbb R}
\def\E{\mathbb E}
\def\P{\mathbb P}
\def\calf{{\cal F}}
\def\calg{{\cal G}}
\def\caln{{\cal N}}
\def\calp{{\cal P}}
\def\call{{\cal L}}
\title{} 
\begin{document} 

\title{Correction to 
``Well-posedness of semilinear stochastic  wave 
equations with H\"{o}lder continuous 
coefficients''   
 \footnote{This is a correction of    {\sl Masiero and Priola,  J. Differential Equations, 263,  2017} \cite{MaPr}.  In that paper there is a mistake in the  proof of the main uniqueness result.  Indeed  we have used  the strong feller property for the Markov semigroup associated to the linear stochastic wave equation which does not hold (cf.  Theorem 9.2.1 in \cite{DZergo}). We thank  Paolo De Fazio (Parma)    for pointing out this mistake.  Here we correct the proof in \cite{MaPr} avoiding the strong Feller property. We  obtain   pathwise uniqueness  starting from any initial condition as in \cite{MaPr}.  The assumptions on the drift term of the semilinear stochastic wave equation are the same as the ones  considered in \cite{MaPr}. The main changes are in Section 4, in Appendix  and in the proof of Theorem \ref{uni1}.    
}     }   
\date{} 
 \author{
 Federica Masiero\\  
 Dipartimento di Matematica e Applicazioni, Universit\`a di Milano Bicocca\\
 via Cozzi 55, 20125 Milano, Italy\\
 e-mail: federica.masiero@unimib.it,\\
\\
Enrico Priola\\ Dipartimento di Matematica  \\  Universit\`a di Pavia
\\ 
 Pavia, Italy  \\ e-mail: enrico.priola@unipv.it
}
  

\maketitle   
\begin{abstract}
We prove that semilinear stochastic abstract wave equations, including wave and plate equations, are well-posed in the strong sense  with an $\alpha$-H\"{o}lder
continuous drift coefficient, if $\alpha \in (2/3,1)$. 
The uniqueness may fail for the corresponding  
deterministic PDE and 
well-posedness is  restored 
by adding an external random forcing of  
  white noise type.
This shows   
a      
kind of 
regularization by noise
for the semilinear wave equation.
To prove the result we introduce  an 
approach based on backward stochastic differential equations, differentiability along subspaces  and  control theoretic results.  
 We stress that  the well-posedness holds despite  the Markov semigroup associated to the linear stochastic wave equation is not strong Feller. 
\end{abstract}

{\vskip 6mm }
 \noindent {\bf Mathematics  Subject Classification (2000):}
 Primary 60H15; secondary 35R60.

\vskip 4mm 
\noindent {\bf Key words:} Nonlinear stochastic wave equation, H\"older continuous drift, strong  uniqueness.
 
  
\section{Introduction   }     
  
    
We prove well-posedness in the strong sense for 
semilinear stochastic abstract wave equations, including wave and plate equations. Let us consider
the following non-linear stochastic wave equation with Dirichlet boundary conditions:
\begin{equation}
 \left\{
 \begin{array}
 [c]{l}%
 \frac{\partial^{2}}{\partial\tau^{2}}y\left(  \tau,\xi\right)  =\frac
 {\partial^{2}}{\partial\xi^{2}}y\left(  \tau,\xi\right)  +b\left(  \tau
 ,\xi, y(\tau, \xi)
\right) +\dot{W}\left(  \tau,\xi\right), \;\;\; \xi \in (0,1),\\
 y\left(  \tau,0\right)  =y\left(  \tau,1\right)  =0,\\
 y\left(  0,\xi\right)  =x_{0}\left(  \xi\right)  ,\\
 \frac{\partial y}{\partial\tau}\left(  0,\xi\right)  =x_{1}\left(  \xi\right),\;\;\; \tau \in (0,T],\;\;\; \xi \in [0,1],
 \end{array}
 \right.  \label{waveequation-holder}
 \end{equation}
  where  $x_0 \in H^1_0([0,1]) $, $x_1 \in L^2([0,1])$  and $\dot{W}\left(  \tau,\xi\right)  $ is a space-time white noise on $\left[
0,T\right]  \times\left[  0,1\right]  $ which  describes an external random forcing;
we treat it as a time-derivative of a cylindrical Wiener process with values in  $L^2([0,1])$. Moreover   $b$  can be  a bounded measurable function
which is H\"{o}lder continuous of exponent $\alpha \in (2/3,1)$ with respect to the $y$-variable;
 see Hypothesis \ref{ip-b} for the more general assumption.
 
 To get pathwise uniqueness for \eqref{waveequation-holder} (see Theorem \ref{uni1}) we introduce  an 
approach based on backward stochastic differential equations. 
{   
 Our main result holds despite the Markov semigroup associated to the linear stochastic wave equation (the so-called Ornstein-Uhlenbeck semigroup) does not have 
  the strong Feller property
 (cf. Theorem 9.2.1 in \cite{DZergo}  and  Remark  \ref{cont}). 
  This is in contrast with other papers dealing with strong uniqueness (see, for instance,  \cite{DPF}, \cite{DPFR}, \cite{DPFPR15}, \cite{DFRV}). 
  
  We will 
 use  the regularizing effect of the  Ornstein-Uhlenbeck semigroup 
  along 
  special directions and   
  interpolation results involving 
 spaces of H\"older continuous functions along 
  special directions (cf. Section 4.1). 
    Related results  have been  considered in \cite{canndap}
    in a different context to investigate   infinite dimensional elliptic equations involving the Gross Laplacian; see in particular Lemma \ref{interpola}. 
    We partially extend Lemma \ref{interpola}   obtaining   Lemma \ref{intervector} which deals with interpolation of  vector-valued functions. This  lemma  will be important   
  in our proof of  pathwise uniqueness (see in particular Theorem \ref{forse111} and the proof of Theorem \ref{uni1}).

 Without the  noise $\dot{W}\left(  \tau,\xi\right)$ the corresponding nonlinear deterministic equation is in general not well-posed; see Section 
\ref{subsec:counterexamples}.
Thus  our result is a kind of regularization by additive noise 
for semilinear stochastic  wave equations.
 There are already  results in this direction
at  the level of SPDEs  of parabolic type 
(see \cite{GP},  
\cite{DPF}, \cite{DPFR}, 
\cite{MN}, \cite{DFRV}, \cite{WZ}    and the references therein).
For  related results on well-posedness of SPDEs by a kind of  multiplicative noise perturbations, see \cite{FGP}, \cite{DT}, \cite{FF13}, \cite{DFV} and the references therein.
 Coming into the details of the problem we treat in the present paper, we study    semilinear abstract wave equations of the form
 \begin{equation} \label{wave111}
\left\{
\begin{array}
[c]{l}
\frac{d^{2} y}{d \tau^{2}} (  \tau)  =
 - \Lambda y (\tau ) + B (t, y(\tau), \frac{d y}{d\tau}(\tau) )  + \dot{W}(\tau), 
 \\
y\left(  0\right)  =x_{0} ,\\
\frac{d y}{d\tau}(  0)  =x_{1},\;\;\; \tau \in (0,T],
\end{array}
\right.  
\end{equation} 
where  
 $\Lambda : \dd (\Lambda) \subset U \to U $ is  a positive self-adjoint operator on a separable Hilbert space $U$ (see, for instance, Example 5.8 and Section 5.5.2
 in \cite{DPsecond}, \cite{Bre} and the references therein) and 
$\left\{ W(\tau)= W_{\tau},\tau\geq0\right\}  $ is a cylindrical Wiener process with values 
in $U$. 
Many 
linear stochastic equations modelling the vibrations of elastic structures 
can be written in  the form \eqref{wave111} with $B=0$
where
$y$
stands for the displacement field (for instance, we consider the stochastic plate equation in Section 3.2).

Comparing with   \eqref{waveequation-holder}, we have that $\Lambda = - \dfrac{d^2}{dx^2}$ with Dirichlet boundary conditions, 
\begin{equation} \label{sett}
 U = L^{2}\left(  \left[
0,1\right]  \right), \;\; 
\dd (\Lambda) = H_{0}^{1}\left(  \left[  0,1\right]  \right)  \cap  H_{}^{2}\left(  \left[  0,1\right]  \right), 
\;\; \dd (\Lambda^{1/2}) = H_{0}^{1}\left(  \left[  0,1\right]  \right) = V 
\end{equation}
and $\dd(\Lambda^{-1/2}) 
 =  H_{}^{-1}\left(  \left[  0,1\right]  \right) $ (cf. Section 2 and \cite{krabs}).  
 
 To study equations \eqref{wave111} we consider  two basic Hilbert spaces: $K$ and $H.$ The first one is 
 $$
 K=\dd(\Lambda^{1/2})\times U = V \times U. 
 $$
 This is the usual space for the deterministic wave equation obtained when $B=0$ (removing $\dot{W}(\tau)$ from the equation). This space is also denoted by $V \oplus U$.
 However  even if $B=0$ 
solutions to  stochastic wave equations \eqref{wave111}  do not evolve in $K$ 
but in the larger space 
$$ 
H=U \times \dd(\Lambda^{-1/2}) = U \times V'  
$$ 
(see  Example 5.8 in \cite{DPsecond}).     Here $V'$ is the dual space of $V$.
On the other hand, as we mention before, the Ornstein-Uhlenbeck semigroup 
has a regularizing effect only along the directions of $K$ (see Section 3).

The existence of a weak solution 
$X_{\tau}^{0,x} = $ $\big ( y(\tau), \frac{d y }{d \tau}(\tau) \big)$  to \eqref{wave111} for any initial condition  
$x = (x_0, x_1) \in H$,  taking values in $H$  and with
continuous paths is well known if $B: [0,T] \times H \to U$ is Borel and bounded; see Section 2 for more details.  It follows by the Girsanov theorem (cf.         
 \cite{DPsecond}, \cite{Ondre04},  \cite{Mas1} and 
Remark \ref{girsa}) 
writing    
\eqref{wave111} as
\begin{equation}
\label{wa1}
dX_{\tau}^{0,x}  = A X_{\tau}^{0,x} d\tau+GB(\tau,X_{\tau}^{0,x})d\tau+GdW_\tau
,\text{ \ \ \ }\tau\in\left[  0,T\right], \text{ \ \ \ } \dis
X_{0}^{0,x} =x \in H,
\end{equation} 
where $A$ is the generator of the wave group in $H$ and    $GdW_\tau =$ $\left(
\begin{array}
[c]{c} 
0\\
dW_\tau
\end{array} 
\right) $.    
  To prove  pathwise uniqueness
 we require that $B : [0,T] \times H \to U$ is Borel, bounded and $\alpha$-H\"older continuous in the $x$-variable, $\alpha \in (2/3,1)$, uniformly in $t \in [0,T]$ (cf. Hypothesis \ref{ip-b1}).

 Our strategy to show pathwise uniqueness requires first to  investigate regularizing properties of the $J$-valued  Ornstein-Uhlenbeck  semigroup $(R_t)$
(see  Section 4). Here  $J$ can be  any real separable Hilbert space.
   We have 
 $R_{\tau} [\Phi] (x) = \E [\Phi (X_{\tau}^{0,x})]$, $\tau \ge 0$,
 $\Phi \in B_b(H,J)$,
where $X_{\tau}^{0,x}$ is the Ornstein-Uhlenbeck process solving \eqref{wa1} when $B=0$. 
   To prove
   the  differentiability  of $R_{\tau} [\Phi]$, along the directions of 
  $K$,  $\tau >0$,  we use  sharp results 
on the behaviour of the  minimal  energy  for the 
 linear control   system 
\begin{equation} 
\left\{   
\begin{array}    
[c]{l}%
\overset{\cdot}{w}\left(  t\right)  =Aw\left(  t\right)  +Gu(  t )
,\\
w\left(  0\right)  =h \in K,
\end{array}
\right.  \label{wave null cont syst}
\end{equation}
with controls $u \in L^2_{loc} (0, \infty ; U)$   
 (see 
  Theorem \ref{ci1},  Section \ref{refer}   and the references mentioned  in Appendix). We also need interpolation results involving H\"older functions along the directions of $K$ 
  (see 
   in particular Lemma \ref{intervector}). 
    We  will also   consider  
second directional derivatives of $R_{\tau} [\Phi]$; see in particular  the estimate   for  
  $$
   \sum_{m \ge 1} \sup_{\, a \in U, |a|_U =1}| \nabla_k \nabla_{Ga}  \E [\, \langle \Phi(X_{\tau}^{0, \cdot \, }) ,  f_m\rangle_J \, ](x)|^2,  
  $$
 $x  \in H,$ $k \in K$, $\tau >0$, given in Lemma \ref{der12a}  
   (here $(f_m)$ denotes  any basis of $J$).   
 At the end of Section 4 we  establish 
 a regularity result for the following    Kolmogorov  integral equation:
\begin{align} \label{bsde2uno2}
u(t,x)
=&\int_t^T R_{s-t}\left[e^{-(s-t) {A}}G B(s,\cdot)\right](x)\,ds+
\int_t^T R_{s-t}\left[e^{-(s-t){A}} \nabla^Gu(s,\cdot) B(s,\cdot) \right](x)\,ds, 
\end{align}
where the  unknown function  $u(t,x)$ takes values in $K$ and  $ \nabla^G u(s,x)B(s,x) =  \nabla_{GB(s,x)}u(s,x) \in K $, $(s,x) \in [0,T] \times H$. 
  To this purpose we use the regularizing effects of $R_{\tau}$ when 
$ 
  J=K
 $
 (see Theorem \ref{forse111}).  In Remark \ref{remark-final1} we will compare our Kolmogorov  equation with the one used in \cite{DPF} to study parabolic SPDEs . 
     
   In Section 5 we introduce backward stochastic differential equations (BSDEs from now on) for the unknown pair of processes $(Y^{t,x},Z^{t,x})$,  
coupled with the  Ornstein-Uhlenbeck process $\Xi^{t,x}$ starting from $x$ at time $t$:
\begin{equation} \label{dicc}
 \left\lbrace\begin{array}
[c]{l}%
d\Xi_{\tau}^{t,x}  =A\Xi_{\tau}^{t,x} d\tau+GdW_\tau
,\text{ \ \ \ }\tau\in\left[  t,T\right], \\ \dis
\Xi_{t}^{t,x} =x,\\ \dis
 -dY_{\tau}^{t,x}=-AY_{\tau}^{t,x} d\tau+G B(\tau,\Xi^{t,x}_\tau)\,d\tau+Z^{t,x}_{\tau}B(\tau,\Xi_{\tau}^{t,x})d\tau-Z^{t,x}_{\tau}\;dW_\tau,
 \qquad \tau\in [t,T],
  \\\dis
  Y_{T}^{t,x}=0.
\end{array}
\right.  
\end{equation}
The  process $Y^{t,x}$  takes values in {$K$} and $Z^{t,x}$ in the space {$L_2 (U,K)$ }
(cf. \cite{HuPeng}, \cite{BP} and \cite{fute}). 
 We study first differentiability of $(Y^{t,x},Z^{t,x})$ with respect to $x$ assuming in addition that the coefficient $B$ is  regular. Such type of results, together with the identification of $Z^{t,x}$ with the directional derivative of $Y^{t,x}$, are  known also in the infinite dimensional case  when $Y^{t,x}$ 
is real, see \cite{fute};  here we extend these results to the case when $Y^{t,x}$ is Hilbert space valued (see Proposition \ref{Teo:ex-regdep-BSDE} and Lemma \ref{prop-identific-Z-diffle}).
Then, using  the results of Section 4 and an approximation argument, we are able to  
study regularity properties of solutions $(Y^{t,x},Z^{t,x})$ together with 
the  identification of $Z^{t,x}$
in the case of an H\"older continuous drift $B$ (see Theorem \ref{teo-identific-Z} which holds under  more general assumptions on $B$ and also Lemma \ref{forse11}).
  
The results of Sections 4 and 5  allow to get in Section \ref{sez-uniq-wave}
the important identity    
 \begin{equation}\label{waveeq-nobad1}
  X_\tau ^{0,x}= e^{\tau A}x+e^{\tau A}v(0,x)-v(\tau,X_\tau^{x})+\int_0^\tau e^{(\tau-s)A}\nabla^G v(s,X_s^x)\;dW_s 
  +\int_0^\tau e^{(\tau-s)A}GdW_s,
 \end{equation}
  which holds for any weak mild solution  $(X_\tau ^{0,x})$. Note that 
the irregular  coefficient  $B$ is not present in \eqref{waveeq-nobad1}. This identity 
involves a deterministic function  $v  $ related to $Y^{t,x}$ (indeed $v(t,x)= Y_{t}^{t,x} \in K$,  $(t,x) \in [0,T] \times H$; see \eqref{v}). Moreover 
$v$  is ``very regular'' because it solves the Kolmogorov equation \eqref{bsde2uno2}.

Identities 
like \eqref{waveeq-nobad1} are established in 
  \cite{FGP}, \cite{DPF}, \cite{DPFR}, 
\cite{DFRV}, \cite{WZ} 
by the so-called  It\^o-Tanaka trick which is a variant of the Zvonkin method used in \cite{Ver} (see also our Remark \ref{remark-final1} and \cite{Fl}).  Here we prove \eqref{waveeq-nobad1} by using 
 the mild form of the BSDE, which, together with the group property of $A$, allows to remove the ``bad term'' $B$ 
  of the semilinear stochastic wave equation.
 We stress that  in contrast with the previous papers  which use the  It\^o-Tanaka trick here we have a   function $v$ which is regular only along the directions of $K$ (see Theorem \ref{forse111}).  
   We can use the previous 
  identity and  prove pathwise uniqueness  
 noting that  
  (see \eqref{wa1}) 
$$ 
  X^{0,x_1}_{\tau} -   X^{0,x_2}_{\tau} \in K,\;\;\; \tau \in [0,T],
 $$
 if $x_1, x_2 \in H$ and $x_1 - x_2 \in K$ (i.e., the difference of two solutions evolves in $K$ but not the single solution; cf. \eqref{evolve}).
      Finally note that  by Theorem  \ref{uni1}, using   an extension   of  the Yamada-Watanabe theorem 
 (see \cite{Ondre04} and \cite{LR15}), one can   obtain   that   \eqref{wave111}   
has a unique {  strong mild solution,}   { for  any $x \in H$.}

  \begin{remark}\label{gen} {\em  Using a localization argument as in  \cite{DPFPR15}, the boundeness of $ B$ could  be dispensed.  In particular, one can prove strong well-posedness of \eqref{wave111}, for any $x \in H$, under 
  Hypothesis  \ref{base}           
 and  assuming that $B : [0,T] \times H \to U$  is continuous on $[0,T] \times H $  
  and  growths at most linearly, uniformly in $t \in [0,T]$;
  moreover, one requires that for any ball $S \subset H$
  the function $B (t, \cdot ): S \to U$ is $\alpha$-H\"older continuous,  for some $\alpha > 2/3$, uniformly in $t \in [0,T]$ (cf. \eqref{sqq}).}
\end{remark}

\section{Notations and 
preliminary results}\label{sec-prel}

Given two real separable Hilbert spaces $H$ and $J$ we denote by  
$L(H,J)$  the space of bounded linear operators
from $H$ to $J$, endowed with the usual operator norm; 
$L_2(H,J)$ is the subspace of all Hilbert-Schmidt operators  endowed with the Hilbert-Schmidt norm $\| \cdot\|_{L_2(H,J)}$. Let $E$ be a  Banach space.  $B_b(H,E)$ is the space of all Borel  and bounded functions from $H$ into  $E$ endowed with the  supremum norm $\| \cdot \|_{\infty}$, $\| f\|_{\infty}$ $= \sup_{x \in H} |f(x)|_E$, $f \in B_b(H,E)$.
 $C_b(H,E)$ is its subspace consisting of all uniformly continuous and bounded functions from $H$ into  $E$.
The space $C^1_b(H,E)$ is the space of all functions in $C_b(H,E)$ which are Fr\'echet differentiable on $H$ with  bounded and uniformly continuous Fr\'echet derivative $\nabla f : H \to L(H,E)$; it is a Banach space endowed with the norm
 $\| \cdot \|_{C^1_b}$, $\| f \|_{C^1_b}$ $= \| f \|_{\infty}$
  $+ \| \nabla f \|_{\infty}$, $f \in C^1_b(H,E)$.
 We  define, for $0<\alpha<1$, the space $C_b^\alpha(H, E) $  of all functions $f$ in $C_b(H, E)$ such that   
\begin{equation}\label{kk10}
[f]_{\alpha} = \sup_{x',\; x \in H , x-x' \not = 0} {| f(x) - f(x')|_E}\, {|x - x'|_H^{-\alpha}}  < \infty.   
\end{equation}
It is a Banach space endowed with the  norm $\| \cdot \|_{\alpha} = \| \cdot\|_{\infty} + [\cdot]_{\alpha}$. 

By $C([0,T]\times H, E)$ we denote the space of continuous functions from the product space
$[0,T]\times H$ into  $E$. Moreover, $B_b([0,T]\times H, E)$ is   the Banach space of bounded Borel measurable  functions from 
$[0,T]\times H$ into  $E$ endowed with the sup norm.

Let $U$ be a real separable Hilbert space with inner product $\langle \cdot ,\cdot \rangle_U $ and norm $|\cdot|_U$. To study \eqref{wave111} we  assume that
\begin{Hypothesis} \label{base}
 $\Lambda : \dd (\Lambda) \subset U \to U $ is  a given positive self-adjoint operator   and there exists $\Lambda^{-1}$ which is a trace class operator from $U$ into $U$.
\end{Hypothesis}
Recall that positivity of $\Lambda$ means that 
there exists $m>0$ such that 
$\langle \Lambda u, u \rangle_U $ $\ge m |u|^2_U, $ $ u \in \dd (\Lambda)$ (see, for instance, Section 3.3 in \cite{TW}).
We also consider the Hilbert space 
$
V = \dd (\Lambda^{1/2})
$  $=$Im$(\Lambda^{-1/2})$ endowed with the inner product 
$$
\langle  h, k \rangle_V 
= \langle \Lambda^{1/2}
h, \Lambda^{1/2} k \rangle_U, \;\;
h,k \in V
$$
and its dual space $V' $ which is again a Hilbert space. Note that $|\cdot |_{V'} $ is equivalent to $| \Lambda^{-1/2}  \cdot|_{U}$. Moreover,   $V' $ can be identified with the completion of $U$ 
with respect to the norm $| \Lambda^{-1/2} \cdot |_{U}$ (see Section 3.4 in \cite{TW}).
 $V' $ is also denoted by 
$\dd (\Lambda^{-1/2})$. 
We have 
$
 V \subset U \simeq U' \subset V'
$
 with continuous inclusions; $\Lambda$ can be extended to an unbounded self-adjoint operator on $V'$ with domain $V$, which we still denote by $\Lambda$: 
\begin{equation}\label{lambda-est}
\Lambda : V  \to V'.   
\end{equation}  
We  consider the linear stochastic wave equation in a complete
probability space $\left(  \Omega,\mathcal{F},\mathbb{P}\right)  $ with a
filtration $\left(  \mathcal{F}_{\tau}\right)  _{\tau\geq0}$ satisfying the
usual conditions. We have 
 \begin{equation} \label{wave1}
\left\{
\begin{array}
[c]{l}
\frac{d^{2} y}{d \tau^{2}} (  \tau)  =
 - \Lambda y (\tau ) + \dot{W}(\tau), 
 \\
y\left(  0\right)  =x_{0} , \;\;\; 
\frac{d y}{d\tau}(  0)  =x_{1}.
\end{array}
\right.  
\end{equation} 
where $\left\{ W(\tau)= W_{\tau},\tau\geq0\right\}  $ is a cylindrical Wiener process
in $U$ with respect to the filtration $\left(  \mathcal{F}_{\tau}\right)
_{\tau\geq0}$.  The process $W_t$ is formally given by ``$W_t $ $ = \sum_{j \ge 1}
  \beta_j(t) e_j$'' where $\beta_j(t)$ are independent real
   Wiener processes and $(e_j)$ denotes a basis in $U$ (see \cite{DPsecond} for more details).
We introduce  the reference Hilbert space for the solutions to \eqref{wave1}: 
\[ 
H=  U \times  V'    
\] 
endowed with the inner product $\langle x,y \rangle_H$ 
$= \langle x_1 , y_1  \rangle_U $ + $\langle x_2 , y_2  \rangle_{V'}$
 $=  \langle x_1 , y_1  \rangle_U $ + $\langle \Lambda^{-1/2} x_2 , \Lambda^{-1/2} y_2  \rangle_{U}$
 and norm $|x|_H = 
(\langle x,x \rangle_H)^{1/2}$, $x,y \in H$. This space is also denoted by $U \oplus V'$.

In the sequel we will also denote $\langle \cdot, \cdot\rangle_H$ and $|\cdot|_H$ by $\langle \cdot, \cdot\rangle $ and $|\cdot|$.
According to   \cite{DPsecond}, the equation \eqref{wave1} is well-posed in $H$ thanks to Hypothesis \ref{base}. 
On the other hand, \eqref{wave1} is not well-posed in the usual  space $K$ for the deterministic   wave equation:
\begin{equation}\label{kkk}
K = V \times U =  {\dd(\Lambda^{1/2})} \times {U}
\end{equation}
(i.e., solutions to  (\ref{wave1}) do not evolve in $K=
V \times U$ even if $x_0 \in V$ and $x_1 \in U$; see  Example 5.8 in \cite{DPsecond}).   Recall the inner product $\langle x,y \rangle_K$  
$= \langle x_1 , y_1  \rangle_V $ + $\langle x_2 , y_2  \rangle_{U} $, $x,y \in K.$  
In $H$ one considers the unbounded wave operator $A$  which generates a unitary group 
$e^{tA}$:
\[
\mathcal{D}\left(  A\right)  = V 
\times U,
\text{ \ \ \ \ }%
A\left(
\begin{array}
[c]{c}%
y\\
z
\end{array}
\right)  =\left(
\begin{array}
[c]{cc}%
0 & I\\
-\Lambda & 0
\end{array}
\right)  \left(
\begin{array}
[c]{c}%
y\\
z
\end{array}
\right)  ,\text{ \ for every }\left(
\begin{array}
[c]{c}%
y\\
z
\end{array}
\right)  \in\mathcal{D}\left(  A\right),
\]
\[
e^{tA}\left(
\begin{array}
[c]{c}%
y\\
z
\end{array}
\right)  =\left(
\begin{array}
[c]{cc}%
\cos\sqrt{\Lambda}t & \frac{1}{\sqrt{\Lambda}}\sin\sqrt{\Lambda}t\\
-\sqrt{\Lambda}\sin\sqrt{\Lambda}t & \cos\sqrt{\Lambda}t
\end{array}
\right)  \left(
\begin{array}
[c]{c}%
y\\
z
\end{array}
\right)  ,\text{ \ \ }t\in\mathbb{R},\;\; \left(
\begin{array}
[c]{c}%
y\\
z
\end{array}
\right)  \in H.
\]
Let $G : U \to H$, 
\begin{equation} \label{G}
G u=\left(
 \begin{array}
 [c]{c}%
 0\\
 u
 \end{array}
 \right)  =\left(
 \begin{array}
 [c]{c}%
 0\\
 I
 \end{array}
 \right)  u,\;\;\; u \in U.
\end{equation}
Notice that   $e^{tA}:K\rightarrow K$ and $e^{tA}:H\rightarrow H$, and moreover since $(e^{tA})_{t}$ is a group of linear operators, then
\begin{equation}\label{immagine-semigruppo}
e^{tA}(K)= K, \quad e^{tA}(H)= H,\;\;\;  t \in \R.   
\end{equation}  
 Equation (\ref{wave1}) can be rewritten in an abstract form as
\begin{equation} 
\left\{ 
\begin{array}
[c]{l}%
dX_\tau  =AX_\tau  d\tau + G dW_{\tau},\text{ \ \ \ }\tau\in\left[  0,T\right].
 \\
X_0  =x \in H,
\end{array}
\right.  
\label{wave eq ab det}
\end{equation}
A solution to \eqref{wave eq ab det} is a  particular 
Ornstein-Uhlenbeck process.
  We study  \eqref{wave1}
 in  $H$ since  
 the operators 
 \begin{equation} \label{qttt}
Q_{\tau } = \dis\int_{0}^{\tau}e^{sA}GG^{\ast}e^{sA^{\ast}}ds, \;\;\; \tau \ge 0,
 \end{equation} 
are of trace class from $H$ into $H$ thanks to Hypothesis \ref{base} (cf. Example 5.8 in \cite{DPsecond}); here $G^*$ denotes the adjoint operator of $G$ in $H$. Thus  the stochastic convolution (i.e., the solution to \eqref{wave eq ab det} when $x=0$)
\begin {equation} \label{stoc1}
S_{\tau} = \int_{0}^{\tau}e^{\left(  \tau-s\right)  A}GdW_s \;\; \text{is well defined in $H$.} 
\end{equation}
Its law at time $\tau$ is the Gaussian measure ${\cal N}(0, Q_{\tau})$ with mean 0 and covariance operator $Q_{\tau}$ (cf. \cite{DPsecond}). 
 Note that we have 
\begin{gather*} 
 S_{\tau} = \left(
 \begin{array}
 [c]{c}%
   C_{\tau}
   \\
 D_{\tau} 
 \end{array}
 \right) \;\;\; \text{where } \; C_{\tau} =  \int_{0}^{\tau}  \frac{ \sin \big ( \sqrt{\Lambda} (\tau -s)\big ) }{\sqrt{\Lambda}}   dW_s,  \;\;\;  D_{\tau} =  \int_{0}^{\tau}  { \cos \big ( \sqrt{\Lambda} (\tau -s)\big ) }
  dW_s. 
\end{gather*}  
 Moreover,  since
$$
 \sup_{t \in [0,T]} \| e^{tA} G \|_{L_2(U, H)} < \infty, \;\;\; T>0,
$$
we can apply   Theorem 5.11  in \cite{DPsecond} and deduce that the process $(S_{\tau})$ has a continuous version with values in $H$.
  Concerning  the semilinear stochastic equation \eqref{wa1}, we assume that
  \begin{Hypothesis}\label{ip-b1}
$B: [0,T] \times H \to   
U$   
 is (Borel) measurable and bounded
 and moreover there exists $C>0$ such that
 \begin{equation}\label{sqq}
|B(t, x+ h) - B(t,x) |_{U} \le C |h|^{\alpha}_H,\;\; x,h \in H ,  \; t \in [0,T],
 \end{equation} 
for some $\alpha \in (2/3,1)$. We also write that $B \in B_b([0,T]; C^{\alpha}_b(H,U))$ with $\alpha \in (2/3,1)$. 
\end{Hypothesis}
   
\vskip 1mm  
  Let $x \in H$. Recall that a  (weak) mild solution  to
(\ref{wa1}) is a tuple   $\left(
\Omega, {\mathcal F},
 ({ \mathcal F}_{t}), \P, W, X\right) $, where $\left(
\Omega, {\mathcal F},
 ({\mathcal F}_{t}), \P \right)$ is a  stochastic basis  
  on which it is defined a
 cylindrical $U$-valued ${\mathcal F}_{t}$-Wiener process $W$ and
 a continuous ${\mathcal F}_{t}$-adapted $H$-valued
process $X = (X_t) = (X_t)_{t \in [ 0,T] }$ such that, $\P$-a.s.,
\begin{equation}
X_{t}=e^{tA}x+\int_{0}^{t}e^{\left(  t-s\right)  A}GB\left (s,
X_{s}\right)
ds+\int_{0}^{t}e^{\left(  t-s\right)  A}GdW_{s},\;\;\; t \in [0,T].\label{mild}
\end{equation}
According to Chapter 1 in \cite{Ondre04}  (see also \cite{LR15})   we say that  strong existence holds for  
equation \eqref{wa1}   if, for every stochastic basis   
$(\Omega,{\cal F}, ({\cal F}_t), \P)$ on which there is defined an $U$-valued cylindrical
${\cal F}_t$-Wiener process $W$, for any initial condition $x \in H,$ 
there exists an $H$-valued continuous $({\cal F}_t)$-adapted
process $X= (X_t)= (X_t)_{t \in [0,T]}$ such that
 $(\Omega ,{\cal F}, ({\cal F}_t), \P,W, X)
 $ is a weak mild solution.  We also write $X_t^{0,x}$ or $X_t^x$  instead of $X_t$. Similarly, we denote by $(X_{\tau}^{t,x})_{\tau \ge t}$ the solution to \eqref{wa1} starting from $x \in H$ at time $t \in [0,T]$.   
{Note that if  $a \in U$ 
\begin{equation*}
 Ga = \left (\begin{array}
[c]{c}
0\\
a
\end{array}
\right) \in K  \;\; \text{and } \;\; e^{tA} 
\left (\begin{array}
[c]{c}
0\\
a
\end{array}
\right)   =\left(
\begin{array}
[c]{c}
\frac{1}{\sqrt{{\Lambda}}}\sin(\sqrt{{\Lambda}}\, t) \, a \\
 \cos(\sqrt{{\Lambda}}t)a
\end{array}
\right)  \in K, \;\; \text{ \ \ }t\in\mathbb{R}.
\end{equation*}   
Hence, since in Hypotheses \ref{ip-b1} we assume that the drift $B$ takes its values in $U$, then $\displaystyle\int_{0}^{t}e^{\left(  t-s\right)  A}GB\left (s,
X_{s}\right)
ds$ evolves in $  {K}$  
{: it is $K$- valued and the map $t\mapsto \displaystyle\int_{0}^{t}e^{\left(  t-s\right)  A}GB\left (s,
X_{s}\right)ds$ is continous due to the boundedness of $B$ (indeed, let $T>0$; for any $\omega$, $\P$-a.s., $s \mapsto GB\left (s,
X_{s}(\omega)\right)$ is Borel and bounded from $[0,T]$ with values in $K$ and so we can apply Lemma 3.1.5 in \cite{CW}).}    
\\   
{Therefore even if  in general  a solution $(X_t^x)$ does not evolve  in $K$ (cf. \eqref{stoc1})   we know  that  when  the initial conditions $x_1 $ and $x_2$ are in $H$  and verify       
$$ 
x_1 - x_2 \in K 
$$ 
 then any couple of weak mild solutions   $X_t^{x_1}$ and $ X_t^{x_2}$ (starting at $x_1$ and $x_2$ respectively) verifies the   property  
\begin{gather} \label{evolve} 
( X_t^{x_1} -  X_t^{x_2})  \;\; \text{evolves in} \; \; K,\;\;\; t \in [0,T].
\end{gather}    
Indeed           
$$ 
X_t^{x_1} -  X_t^{x_2}=e^{ t  A}(x_1-x_2)+\displaystyle\int_{0}^{t}e^{\left(  t-s\right)  A}G\Big(B\left (s,X^{x_1}_{s}\right)-B\left (s,X^{x_2}_{s}\right)\Big)\,ds,\;\;\; t \in [0,T];
$$ 
the stochastic integral has disappeared, and since $x_1 - x_2 \in K$, also $e^{ t  A}(x_1-x_2)\in K$; the other term we have already discussed that belongs to $K$.}   
    Note that, $\P$-a.s.,  the paths of $ (X_t^{x_1} -  X_t^{x_2})$
 are continuous functions from $[0,T]$ with values in $K$.     
    Property  \eqref{evolve} will be important  in the proof of  our  uniqueness result (see Section 6).  Indeed recall that    the  Ornstein-Uhlenbeck semigroup  regularizes only in the directions of $K$  (see Section 4).

 
\begin{remark} \label{girsa} {\em 
 Thanks to  the boundedness of $B$ we can  apply the Girsanov Theorem as in \cite{Mas1}. For the infinite dimensional Girsanov theorem we refer to Proposition 7.1 in \cite{Ondre04}      and Section 10.3 in \cite{DPsecond}.
 The Girsanov theorem allows to prove  Theorem 5 in \cite{Ondre04} which states that there always exists  a weak mild solution, starting from any $x \in H$ (Theorem 5 in \cite{Ondre04}  even shows weak existence  for random initial conditions).   
Moreover uniqueness in law holds for \eqref{wa1}. To deduce such results by  Theorem 5 of \cite{Ondre04}  we note  the following facts: as $f $ in \cite{Ondre04}    
 we can consider   
our $GB : [0,T] \times H \to {K} \subset H$; our space $H$ can be the space  $U = X =X_1$ used in \cite{Ondre04}; {the space $U_0$ in \cite{Ondre04}  can be our $U$; finally   as  cylindrical Wiener process of Theorem 5 in \cite{Ondre04} we can consider  our $ W$.}         
}
\end{remark}
\section {Examples  }\label{sez-examples}
 
   
We   present two classes of abstract semilinear stochastic wave equations that we can treat:
the stochastic semilinear wave and  plate equations. In Section 3.3
we also give a counterexample to uniqueness for deterministic semilinear wave equations with H\"older continuous coefficients. 
\subsection{Stochastic wave equations}
 We first deal with   the semilinear  stochastic wave equation as in Introduction, i.e., 
\begin{equation}
 \left\{
 \begin{array}
 [c]{l}%
 \frac{\partial^{2}}{\partial\tau^{2}}y\left(  \tau,\xi\right)  =\frac
 {\partial^{2}}{\partial\xi^{2}}y\left(  \tau,\xi\right)  +b\left(  \tau
 ,\xi, y(\tau, \xi)
\right) +\dot{W}\left(  \tau,\xi\right), \\
 y\left(  \tau,0\right)  =y\left(  \tau,1\right)  =0,\\
 y\left(  0,\xi\right)  =x_{0}\left(  \xi\right)  ,\\
 \frac{\partial y}{\partial\tau}\left(  0,\xi\right)  =x_{1}\left(  \xi\right),\;\;\; \tau \in [0,T],\; \xi \in [0,1].
 \end{array}
 \right.  \label{waveequation-holder11}
 \end{equation}
 Comparing with   \eqref{wave111}, $\Lambda = -\frac{d^2}{dx^2}$ with Dirichlet boundary conditions, i.e.,  $\dd (\Lambda) = H_{0}^{1}\left(  \left[  0,1\right]  \right)  \cap  H_{}^{2}\left(  \left[  0,1\right]  \right)$. Note that   
 $\Lambda^{-1}$ is of trace class since eigenvalues of $\Lambda$ are $\lambda_n = n^2$, $n \ge 1$. Thus Hypothesis \ref{base} holds. 

 We still denote by $\Lambda$ its extension on $H^{-1}\left(  \left[  0,1\right]  \right)$ with domain
\[
\mathcal{D}\left(  \Lambda\right)  =
H_{0}^{1}\left(  \left[  0,1\right]  \right)  ,\text{ \ \ \ \ }
\Lambda y    =-\frac{\partial^{2} y }{\partial\xi^{2}%
}  \in H^{-1}([0,1]) ,\text{ \ for every }y\in\mathcal{D}\left(
\Lambda\right).
 \]
We consider  
 $x_0 \in  U = L^{2}\left(  \left[
0,1\right]  \right)$, $x_1 \in H_{}^{-1}\left(  \left[  0,1\right]  \right)$.

Writing 
$ X_\tau(\xi):=
 \Big(
 \begin{array}{l}
  y(\tau, \xi)\\
  \frac{\partial}{\partial \tau}y(\tau,\xi)
 \end{array}
 \Big),$  according to Section 2,
the reference  Hilbert space for the solution 
is  $H= L^{2}\left(  \left[
0,1\right]  \right) \times H_{}^{-1}\left(  \left[  0,1\right]  \right)$.

By considering $G: L^2([0,1])  \longrightarrow H$,
$G u=\left(
 \begin{array}
 [c]{c}%
 0\\
 u
 \end{array}
 \right)  =\left(
 \begin{array}
 [c]{c}%
 0\\
 I
 \end{array}
 \right)  u
 $
(cf. \eqref{G}) 
we can rewrite \eqref{waveequation-holder11} 
in the abstract form \eqref{wa1} with
 $ B(\tau, h)$ $:=
 b(\tau, \cdot, h_1 (\cdot))$ and
\begin{equation}\label{B}
G B(\tau, h) (\xi):=
 \left(\begin{array}{l}
     0\\
 b(\tau, \xi, h_1(\xi)) 
 \end{array}\right), \;\;\; \xi \in [0,1],\; \tau \in [0,T], \; h = (h_1, h_2) \in H.
\end{equation}
It is easy to check that the next assumptions on $b$ imply the validity of Hypothesis \ref{ip-b1} for $B$.
 \begin{hypothesis}\label{ip-b}
The function $b : \left[  0,T\right]  \times\left[  0,1\right]
\times\mathbb{R} \to \R $  is measurable and,  for  $\tau
\in\left[  0,T\right]  ,$ a.e. $\xi\in\left[  0,1\right]  ,$ the map $b\left(
\tau,\xi,\cdot\right)  :\mathbb{\mathbb{R}}  \rightarrow\mathbb{R}$ is continuous. 
There exists $c_{1}$ bounded and measurable 
on $\left[  0,1\right] $, $\alpha \in (2/3,1)$,  such that,
 for  $\tau\in\left[
0,T\right]  $ and a.e. $\xi\in\left[  0,1\right],$   
\[
\left|  b\left(  \tau,\xi,x\right)  -b\left(  \tau,\xi,y\right)  \right|
\leq c_{1}\left(  \xi\right)  \left|  x-y\right|^\alpha,
\]
$ x,\,y \in \R$. Moreover
$\left|  b\left(  \tau,\xi,x\right)  \right|  \leq c_{2}\left(  \xi\right)
,$  for $\tau \in [0,T]$, $x \in \R$ and a.e. $\xi \in [0,1]$, with $c_{2} \in L^2 (\left[  0,1\right] ) $.
\end{hypothesis}
   

\subsection{Stochastic plate equations }

Let  $D \subset \R^2$ be  a bounded open domain  with smooth boundary  $\partial D$, which
represents an elastic plate.  
We  consider   the following semilinear  stochastic  plate  equation  
\begin{equation}
 \left\{
 \begin{array}
 [c]{l}%
 \frac{\partial^{2} y }{\partial\tau^{2}}\left(  \tau,\xi\right)  =- \triangle^2 y\left(  \tau,\xi\right)  +b\left(  \tau
 ,\xi, y(\tau, \xi)
\right) +\dot{W}\left(  \tau,\xi\right), 
\\ \\
y\left(  \tau, z\right)    =0,  \;\;\;\;
\frac{  \partial y}{\partial \nu } \left(  \tau, z\right)    =0,
\;\;\; z \in \partial D,
\\ \\
y\left(  0,\xi\right)  =x_{0}\left(  \xi\right)  , \;\;\;
 \frac{\partial y}{\partial\tau}\left(  0,\xi\right)  =x_{1}\left(  \xi\right),\;\;\; \tau \in (0,T],\; \xi \in \overline{D},
 \end{array}
 \right.  \label{plate}
 \end{equation}
where $\triangle $ is the Laplacian in $\xi$, $\triangle^2 = \triangle (\triangle)$ is a fourth order operator, $\frac{\partial}{\partial \nu}$ denotes the outward  normal derivative on the boundary (we are considering the so-called clamped boundary conditions) and $\dot{W}\left(  \tau,\xi\right)$ is a space-time white noise on $[0,T] \times D$. 
We remark that weak existence and uniqueness in law 
for  non-linear  stochastic plate equations with multiplicative noise  have been established  in   \cite{kim}.

Following Section III.8.4 in \cite{BD1}   we introduce   $U = L^2(D)$ (the $L^2(D)$ space is defined with respect to the Lebesgue measure);  the operator $\Lambda = \triangle^2$, with domain
$$
\dd (\Lambda) = H^4(D) \cap H^2_0(D)
$$
is a positive self-adjoint operator ($H^2_0(D)$ is the closure of $C_0^{\infty}(D)$ in $H^2(D)$, see Definition 13.4.6 in \cite{TW}). One can prove that $\dd (\Lambda^{1/2}) = H^2_0(D)$
 (see page 172 in \cite{BD1}). 
The topological dual of $H^2_0(D)$ will be indicated by $H^{-2}(D)$.  
In order to check that $\Lambda$ satisfies Hypothesis \ref{base} we refer to \cite{CH}. Indeed a classical  result by Courant (see page 460 of \cite{CH}) states that the  eigenvalues $\lambda_n$ of $\Lambda$ have the asymptotic behaviour
\begin{equation}
 \label{cou}
 \lambda_n \sim \frac{(4 \pi n)^2}{f^2}
 \end{equation} 
where $f$ denotes the area of $D$ (such behaviour depends on the size but not on the shape of the plate). It follows that $\Lambda^{-1}$ is a trace class operator in $L^2(D)$.
 Proceeding as in Sections 2 and 3.1 we consider an  extension  
of  $\Lambda$ to   $H^{-2}(D)$  with domain $H^2_0(D)$. 
 
The initial conditions of \eqref{plate} are   
 $x_0 \in   L^{2}\left( D  \right)$, $x_1 \in H_{}^{-2}\left(  D  \right)$.

The reference  Hilbert space for the solution $ X_\tau(\xi):=
 \Big(
 \begin{array}{l}
  y(\tau, \xi)\\
  \frac{\partial}{\partial \tau}y(\tau,\xi)
 \end{array}
 \Big)$  
is  $H= L^{2}\left( D  \right) \times H_{}^{-2}\left(  D  \right)$.  
By considering $G: L^2(D)  \longrightarrow H$,
$G u=\left(
 \begin{array}
 [c]{c}%
 0\\
 u
 \end{array}
 \right)
 $
(cf. \eqref{G}) 
we  rewrite \eqref{plate} 
in the abstract form \eqref{wa1} with
 $ B(\tau, h)$ $:=
 b(\tau, \cdot , h_1 (\cdot)) $, $h = (h_1, h_2) \in H$. 
 The assumptions we impose on   $b$ to verify Hypothesis \ref{ip-b1} and get well-posedness for \eqref{plate} are similar to 
Hypothesis \ref{ip-b}.
 \begin{hypothesis}\label{ip-b3}
The function $b : \left[  0,T\right]  \times D
\times\mathbb{R} \to \R $  is measurable and, for  $\tau
\in\left[  0,T\right]  $ and a.e. $\xi\in D  ,$ the map $b\left(
\tau,\xi,\cdot\right)  :\mathbb{\mathbb{R}}  \rightarrow\mathbb{R}$ is continuous. 
 There exists $c_{1}$ bounded and measurable 
on $D $, $\alpha \in (2/3,1)$,  such that,
 for  $\tau\in\left[
0,T\right]  $ and for a.e. $\xi\in D  ,$   
\[
\left|  b\left(  \tau,\xi,x\right)  -b\left(  \tau,\xi,y\right)  \right|
\leq c_{1}\left(  \xi\right)  \left|  x-y\right|^\alpha,
\]
$ x,\,y \in \R$. Moreover
$\left|  b\left(  \tau,\xi,x\right)  \right|  \leq c_{2}\left(  \xi\right)
,$ for $\tau \in [0,T]$ and a.e. $\xi \in D$,   with $c_{2} \in L^2(D)  $.
\end{hypothesis}

\subsection{A counterexample to well-posedness in the deterministic case}
\label{subsec:counterexamples}

Let us consider the following semilinear deterministic wave equation for $\tau \in [0,T]$:
\begin{equation}
\left\lbrace
\begin{array}
 [c]{l}%
 \frac{\partial^{2} y}{\partial\tau^{2}}\left(  \tau,\xi\right)  =\frac
 {\partial^{2}y}{\partial\xi^{2}}\left(  \tau,\xi\right)  +b\left(  
 \xi, y(\tau, \xi)\right) \\
 y\left(  \tau,0\right)  =y\left(  \tau,\pi\right)  =0,\\
 y\left(0,  \xi\right)  =0  , \;\; \;
 \frac{\partial y}{\partial\tau}\left(0, \xi\right)  =0,\;\; \xi \in [0, \pi].
 \end{array}
 \right.  \label{waveequation-holder-det1}%
 \end{equation}
with 
\begin{align*}
 b\left(  
 \xi, y \right)&=56  \sqrt[4]{\sin \xi\, y^3
 }\, I_{\left\lbrace\vert y \vert<2T^8\right\rbrace}+
 y \, I_{\left\lbrace\vert y \vert<2T^8\right\rbrace} 
+ 56  \sqrt[4]{8T^{24}\sin \xi }\, I_{\left\lbrace\vert y \vert\geq2T^8\right\rbrace}+
 2T^8\, I_{\left\lbrace\vert y \vert\geq 2T^8\right\rbrace},
\end{align*}
where $\xi \in [0, \pi]$, $y \in \R$;  $I_A$ is the indicator function of a set $A \subset \R$.
 Notice that $b$, which is independent of $\tau$ satisfies Hypothesis \ref{ip-b}.
It turns out that $y(\tau,\xi)\equiv 0$ and 
 $y(\tau,\xi)=\tau^8 \sin \xi $ are both 
 solutions to equation 
(\ref{waveequation-holder-det1}).


 \section{The $J$-valued transition semigroup for the stochastic wave equation}

 Let $J$ be a real separable Hilbert space. As in Section 2  we consider   the Hilbert spaces  
 $$
 H= U \times  V' , \;\; \; K = V \times U \subset H.
 $$
 Moreover   $(e_j)$ is a basis in $U$ such that   $(e_j) \subset \mathcal{D} (\Lambda) \subset U$ and    
  \begin{equation}\label{basee}
 \Lambda e_j = \lambda_j e_j, \;\; \lambda_j >0,\;\; j \ge 1; \;\;\;  \sum_{j \ge 1} \lambda_j^{-1} < \infty.  
  \end{equation}
  We will  prove some   regularizing effects for the 
Ornstein-Uhlenbeck semigroup $(R_t)$ related to stochastic wave   equation \eqref{wa1} with $B=0$ and acting on $J$-valued functions $\Phi$.  
Recall that 
 \begin{equation}
 R_{\tau}\left[  \Phi\right]  \left(  x\right)  =  R_{\tau} \Phi \left(  x\right)  =\mathbb{E}\Phi\left(
X_\tau^{0,x}  \right),\quad \Phi \in B_b(H,J), \;\; x \in H, \; \tau \ge 0,
\label{wave-Hsemigroup}
\end{equation}
where $X $, defined by (\ref{mild}), is the Ornstein-Uhlenbeck process (cf. \cite{DPF}). 
Since $X$ is time homogeneous, we have
 \[
 R_{\tau - t}\left[  \Phi\right]  \left(  x\right)  =\mathbb{E}\Phi\left(
X_\tau^{t,x}  \right),
\;\;  
\Phi \in B_b(H,J) , 
%
 \]
 $\tau \ge t \ge 0$, $x \in H$.   Similarly,  
  we  consider  the usual  Ornstein-Uhlenbeck semigroup $(P_t)$ acting on scalar functions $\phi \in B_b(H)$: 
   \begin{equation} \label{ptt1}
 P_{\tau}\left[  \phi\right]  \left(  x\right)  =  P_{\tau} \phi \left(  x\right)  =\mathbb{E}\phi\left(
X_\tau^{0,x}  \right),\quad \phi \in B_b(H), \;\; \tau \ge 0.
 \end{equation}
 Using also the results in Appendix, 
  for $t>0,$ we show the differentiability  
  of $R_t \Phi$ along  the directions of $K$. Moreover, 
   we  prove that,  for any $x  \in H,$ $k \in K$, $t>0$,
  $$
   \sum_{m \ge 1} \sup_{|a|_U =1}| \nabla_k \nabla_{a}^G  P_t [\, \langle \Phi (\cdot ),  f_m\rangle_J \, ](x)|^2  
  $$
  is finite (here $(f_m)$ denotes  any basis of $J$) and we provide a bound independent of $x$ and $k$ (see  Lemma \ref{der12a} and compare with  Chapter 6 of  \cite{DP3} and Section 3 of \cite{DPF}). 

  \vskip 1mm
 In order to  study  differentiability properties of $R_t[\Phi ]$ for $t>0$ we fix   some basic definitions.  
  If $F: H = U \times V'\to J$ is G\^ateaux  differentiable at $x \in H$ we denote by $\nabla F(x) \in L(H,J)$ its G\^ateaux  derivative at $x$ and by $\nabla_h F(x)= \nabla F(x) h$ its directional derivative along the direction $h \in H$: 
$$ 
 \lim_{s \to 0}  \frac{ F (x+ s h )- F (x)}{s} 
= \nabla_h F (x), \,\;\; x \in H,\, h \in H. 
$$
We say that $F:  H \to J$ is differentiable along the subspace $K = V \times U \subset H$ if there exists at any $x \in H$ the   directional derivative along any   direction $k \in K$ (i.e.,  $\lim_{s \to 0}  \frac{ F (x+ s k )- F (x)}{s}\in J$, for any $x \in H$, $k \in K$).  We denote the directional  derivative at $x$ along the direction $k \in K$ as  
\begin{gather*}   
\nabla_k F(x) \in J. 
\end{gather*}  
 If in addition 
 $$
 k \mapsto \nabla_k F(x) \;\; \text{  belongs to $L(K,J)$} 
 $$  
 we indicate  such linear  operator with  $\nabla^K F(x)$.    We say that $F$ is $K$-differentiable on $H$  if  it is   differentiable along the subspace  $K$ and there exists  $\nabla^K F(x) \in L(K,J)$ for any $x \in H$ (if $J=\R$ then $\nabla^K F(x)$ can be identified with an element in $K$ by the Riesz theorem).

 Note that the concept of   differentiability along subspaces    arises naturally in the Malliavin Calculus  (see also the related concept of  Gross differentiability; we refer to  \cite{ShigekawaBook} and the references therein).      
   
Let $G : U \to H$, $Ga = \Big(
\begin{array}
[c]{c}
0\\
a
\end{array}
\Big) \in K \subset H$, $a \in U$. If $F: H \to J $ is differentiable along the subspace $G (U)$ we set 
\begin{equation} \label{dg}
\nabla_a^G F(x) = \nabla_{Ga} F(x) \in J,\;\;\;\; a \in U,\; x \in H.
\end{equation}
If in addition $a \mapsto \nabla_a^G F(x)$ belongs to $L(U,J)$ we denote such linear  operator with  $\nabla^G F(x)$. We say that $F$ is $G$-differentiable on $H$ if it is differentiable along the subspace $G (U)$   and  there exists  $\nabla^G F(x) \in L(U,J)$ for any $x \in H$.  
   
  {  Note that  if $F: H \to J$ is $K$-differentiable on $H$ then it is also 
   $G$-differentiable on $H$ and   $\nabla^G F(x) = \nabla^K F(x) G \in L(U,J)$.  
     } 
 

  
 \vskip 1mm
For $0<\alpha<1$, we introduce  the space $C_K^\alpha(H,J)$ consisting   of all functions $F$ in $C_b(H,J)$ which are  $\alpha$-H\"older continuous along $K $, i.e., such that       
\begin{equation}\label{xii}
[F]_{\alpha, K} = \sup_{x \in H = U \times V',\; k \in K , k \not = 0} \frac{| F(x+ k) - F(x)|_J}{|k|_K^{\alpha}} < \infty.        
\end{equation} 
It is a Banach space endowed with the  norm $\| \cdot \|_{\alpha, K} = \| \cdot\|_{\infty} + [\cdot]_{\alpha, K}$.    
    

\subsection{Interpolation results involving $K$-differentiable functions
}

 We first   introduce  a function space related to the $K$-differentiability.
  We say that $f \in C_K^1(  H , J)$ if $f \in C_b(H, J)$, 
$f$ is $K$-differentiable on $H$ 
 and $\nabla^K f : H \to L(K, J)$ is uniformly continuous and bounded.  
 This is a Banach space endowed with the norm 
 \begin{gather*}    
 \| f \|_{C_K^1(  H , J)} = \| \nabla^K f  \|_{\infty} +  \| f  \|_{\infty},\;\; f \in C_K^1(  H,J),  
  \end{gather*}  
 setting $\sup_{x \in H} | \nabla^K f(x) |_J 
 = \| \nabla^K f  \|_{\infty}$. 
  When $J = \R$ we set  $ C_K^1(  H ,\R) =   C_K^1(  H )$. 
 Recall that for $f \in  C_K^1(  H )$ one has: $\nabla^K f : H \to K$  uniformly continuous and bounded.  
 
   Let us consider  the following  operator $Q: H \to H$,
 \begin{equation}\label{sdr}
 Q= \left(
\begin{array}
[c]{cc}%
\Lambda^{-1} & 0\\
0 &  \Lambda^{-1}
\end{array} 
\right),\;\;\;\; Q h =  \Big(
\begin{array}
[c]{c}
\Lambda^{-1} h_1 \\
 \Lambda^{-1} h_2
\end{array}
\Big),\;\;\; h = (h_1, h_2 ) \in H = U \times V'.  
\end{equation} 
Let   $(e_j)$ be the  basis in $U$ defined in \eqref{basee}. Then $(\sqrt{\lambda_j} e_j) $ is a basis of $V'$ and $\{(e_j,0)\}_{j \ge 1} \cup
\{(0, \sqrt{\lambda_j} e_j)\}_{j \ge 1} $ is a basis of $H$. 
 It is not difficult to check that  $Q$ is a symmetric positive trace class operator  and that  
\begin{equation}\label{coincide}
\text {$C^1_K(H)$ coincides  with the space $C^1_Q(H)$ introduced   in \cite{canndap} with equivalence of norms.} 
\end{equation}
 To this purpose we note that  $Q^{1/2} H = K$. Then we  consider   conditions (i), (ii) and (iii) used in the definition of $C^1_Q(H)$ in Section 2.1 of   \cite{canndap}. Let  $f \in C_b (H)$. Condition (i) says that there exist all the directional derivatives of $f$ in the directions of $K = Q^{1/2} H$.
Let $k = Q^{1/2}h$ with $h \in H $. The directional derivative in $x$  along the direction  $k$ is denoted by 
$$
 \nabla_k f(x) =  \nabla_{Q^{1/2}h} f(x). 
$$
Condition   (ii) says that for any $x \in H$,  there exists $D_Q f(x) \in H$ such that    
\begin{gather*}
 \nabla_{Q^{1/2}h} f(x) = \langle D_Q f(x), h \rangle_H. 
\end{gather*}
 If  $k \in K$ then  $k = Q^{1/2} h$  for a unique  $h \in H.$ 
 We have   $\langle D_Q f(x), h \rangle_H = \langle  Q^{1/2} D_Q f(x), Q^{1/2} h \rangle_K  = \langle Q^{1/2} D_Q f(x), k \rangle_K  $. Thus
condition  (ii) is equivalent to say that  $ k \mapsto  \nabla_k f(x)$ is  linear and continuous from $K$ into $\R$. Moreover such linear functional can be identified with  $Q^{1/2} D_Q f(x)$. According to our previous   notation we can  write  
$$
\nabla^{K} f(x) = Q^{1/2} D_Q f(x),\;\;\; x \in H.
$$
Condition (iii) requires that the mapping: $x \mapsto D_Q f(x)$ is uniformly continuous and bounded from $H$ into $H$. This is equivalent to say that 
 the mapping: $x \mapsto Q^{1/2} D_Q f(x) = \nabla^{K} f(x)$ is uniformly continuous and bounded from $H$ into $K$. This shows \eqref{coincide}.

\vskip 1 mm    
Similarly to \cite{canndap} we  define, for $0<\alpha<1$, the space $C_K^\alpha(H, J) $  of all functions $f$ in $C_b(H, J)$ such that   
\begin{equation}\label{kk1}
[f]_{\alpha, K} = \sup_{k',\; k \in K , k-k' \not = 0} \frac{| f(k) - f(k')|_J}{|k - k'|_K^{\alpha}}  < \infty.  
\end{equation}
It is a Banach space endowed with the  norm $\| \cdot \|_{\alpha, K} = \| \cdot\|_{\infty} + [\cdot]_{\alpha, K}$, where $ \| f\|_{\infty} = \sup_{x \in H} |f(x)|_J$.        
  Note that $C_b^\alpha(H, J) \subset C_K^\alpha(H, J) $  and in general the inclusion is strict (cf. Remark \ref{pz}). 
 
 \begin{remark} \label{tes} {\em 
  Condition \eqref{kk1} is equivalent   to  
\begin{equation*}\   
\sup_{x \in H = U \times V',\; k \in K , k \not = 0} \frac{| f(x+ k) - f(x)|_J}{|k|_K^{\alpha}}  < \infty    
\end{equation*} 
  (cf. \eqref{xii}). 
 Indeed  if $x \in H$ and $k \in K$ there exists a sequence $(k_n) \subset K$ such that $k_n \to x$ in $H$. Then by \eqref{kk1} we find 
 $
| f(k_n+ k) - f(k_n)|_J     \le C |k|_K^{\alpha}.
$  Passing to the limit as $n \to \infty$ we obtain ${| f(x+ k) - f(x)|_J} \le C |k|_K^{\alpha}.$ 
}    
 \end{remark}
 The space  $C_K^\alpha(H) = C_K^\alpha(H, \R)$  coincides with the space $C_Q^\alpha(H)$
introduced in Section 2.2 in \cite{canndap} as the space of all functions $f \in C_b(H) $  such that   
\begin{gather*} 
[f]_{\alpha , Q} = \sup_{h',\; h \in H , h-h' \not = 0} \; {| f( Q^{1/2} h ) - f(Q^{1/2} h')|}\, \cdot  {|h - h'|_H^{-\alpha}}  < \infty 
\end{gather*}
with equivalence of norms.   By \cite{canndap} we now obtain the following useful interpolation result.
\begin{lemma}\label{interpola}
We have, for $\alpha \in (0,1)$,  with equivalence of norms,
\begin{equation} \label{internos}
(C_b(H) , C^1_K(H))_{\alpha , \infty} =  C^{\alpha}_K(H).
\end{equation}  
\end{lemma}
\begin{proof} The result is proved  in Proposition 2.1 in \cite{canndap} in the  form 
 \begin{gather*}
(C_b(H) , C^1_Q(H))_{\alpha , \infty} =  C^{\alpha}_Q(H).
\end{gather*}
 We only recall that  $f \in X_{\alpha} = (C_b(H) , C^1_K(H))_{\alpha , \infty}  $  if $\| f \|_{X_{\alpha}} = \sup_{t \in (0,1]} t^{- \alpha}  L(t,f) < \infty$ where 
 $ L(t,f) =\inf \{ \|a \|_{C_b(H)} + t \|b \|_{C^1_K(H)},$ $f = a + b,$ $a \in C_b(H),\, b \in C_K^1(H) \}$ (see, for instance,  Section 2.3 in \cite{DPsecond}).
 \end{proof}   
    
\begin{remark} \label{pz}   {\em 
   Theorem  3.1 in \cite{prizambo}    implies  that $C^{\alpha}_b(H)$ (the space of real $\alpha$-H\"older continuous and bounded functions defined on $H$) is strictly included in $C^{\alpha}_Q(H)$,  $\alpha \in (0,1) $. 
  Indeed $C^{\alpha}_b(H)$ is contained in  the interpolation space ${\cal D}_{\cal A} (\alpha/2, \infty)$ (see the notation in \cite{prizambo})  which by Theorem 3.1 is strictly included  in  $C^{\alpha}_Q(H)$. 
 }       
 \end{remark}

 When $J$ is infinite dimensional it is an open problem to characterize both $\big(C_b(H,J) , C^1_K(H,J) \big)_{\alpha , \infty}$ and 
 $\big(C_b(H,J) , C^1_b(H,J) \big)_{\alpha , \infty}$. However we can prove  the following inclusion which will be important for the sequel (see in particular the proof of Lemma \ref{der12ab}).

\begin{lemma} \label{intervector} For any real separable Hilbert space $J$ we have
\begin{equation}\label{inter}
 C^{\alpha}_b(H, J) \subset  \big(C_b(H,J) , C^1_K(H,J) \big)_{\alpha , \infty},\;\; \alpha \in (0,1),  \;\; \text{ with a continous inclusion.}
\end{equation}
 \end{lemma}
 \begin{proof}
 We take into account  Remark 2.3.1 in \cite{DPsecond}.
   Let  
 $f \in  C^{\alpha}_b(H, J)$ and $t \in (0,1]$. We prove that 
 there exists $a_t \in C_b(H,J)$ and $b_t \in C^1_K(H,J)$ such that $f = a_t + b_t$ and
\begin{equation}\label{sfg}
 \| a_t\|_{ C_b(H,J)} + t \| b_t\|_{C^1_K(H,J)} \le c \| f\|_{C^{\alpha}_b(H, J)} \,  t^{\alpha}
\end{equation} 
with $c>0$ independent of $t$ and $f$. This gives \eqref{inter}.

Let us consider the trace class operator $Q: H \to H$ given in \eqref{sdr}. Recall that 
$Q $ is   injective and $Q^{1/2}(H) = K$.
 As in Chapter 3 of \cite{DPsecond} we consider the heat semigroup $(V_t)$ acting on functions in $C_b(H,J)$:
\begin{gather*}
 V_r g(x) = \int_{H} g(x+ y) \caln(0,rQ)(dy), \;\; x \in H,\;\; g \in  
 C_b(H,J),\; r \ge 0.   
\end{gather*} 
 For $t \in (0,T]$ we set  
\begin{gather*}
 a_t = f - V_{t^2} f,\;\;\; b_t =   V_{t^2} f
\end{gather*}
and prove that \eqref{sfg} holds. Let us first consider $a_t$. It is easy to prove that $a_t \in C_b(H,J) $. Moreover,  we have
\begin{gather*}
 |a_t (x)|_J \le \int_{H} |f(x+ y) - f(x) |_J \, \caln(0,t^2 Q)(dy)
 \le \| f\|_{C^{\alpha}_b(H, J)}   \int_{H} | t y |_J^{\alpha} \, \caln(0, Q)(dy) \le c_{\alpha} \| f\|_{C^{\alpha}_b(H, J)}  t^{\alpha}.
\end{gather*} 
To prove that  $b_t \in C^1_K(H,J)$  we consider
 $k = Q^{1/2}h \in K$ with $h = Q^{-1/2} k\in H $. Arguing as in Theorem 3.3.3 in \cite{DPsecond}, using the Cameron-Martin theorem, one can prove that, for any $x \in H,$ there exists the directional derivative
\begin{gather*} 
 \nabla_k b_t (x) 
  = \lim_{s \to 0}  \frac{ V_{t^2} f (x+ s Q^{1/2}h )- V_{t^2} f (x)}{s} 
\\ 
= \frac{1}{t}\int_{H} f(x+ y)  \langle (t^{2} Q)^{-1/2} y , h \rangle \caln(0,t^2 Q)(dy) 
= \frac{1}{t}\int_{H} f(x+ y)  \langle (t^{2} Q)^{-1/2} y ,  Q^{-1/2} k \rangle \caln(0,t^2 Q)(dy).
  \end{gather*} 
 It is not difficult to prove that $k \mapsto  \nabla_k b_t (x) $ is linear and continuous from $K $ into $J$. We  note that
 \begin{gather*}
 |\nabla_k b_t (x)|_J^2 \le  \frac{1}{t^2} \| f\|_{\infty}^2 \int_{H} |  \langle (t^{2} Q)^{-1/2} y ,  Q^{-1/2} k \rangle|^2\,  \caln(0,t^2 Q)(dy)
\le 
 \frac{1}{t^2} \| f\|_{\infty}^2 \, | Q^{-1/2} k|^2 = \frac{|k|_K^2}{t^2} \| f\|_{\infty}^2.
 \end{gather*} 
 Writing, for $k \in K$, $x,x' \in H$, 
 \begin{gather*}
|\nabla_k b_t (x) - \nabla_k b_t (x')|_J^2 \le  \frac{1}{t^2}  \int_{H} |  \langle (t^{2} Q)^{-1/2} y ,  Q^{-1/2} k \rangle|^2\,  \caln(0,t^2 Q)(dy)
\\ \cdot  \int_{H} | f(x + y ) - f(x' +y)  |^2_J\,  \caln(0,t^2 Q)(dy)
\le  \frac{|k|_K^2}{t^2} \| f\|_{C^{\alpha}_b(H, J)}^2\,  |x-x'|^{2\alpha}
\end{gather*} 
 (we have used that $| Q^{-1/2} k|= |k|_K$)
 we check easily that $\nabla^K b_t : H \to L(K, J)$ is uniformly continuous and bounded.   Finally,  
  since   
  \begin{gather*} 
 \frac{1}{t}\int_{H} f(x)  \langle (t^{2} Q)^{-1/2} y , h \rangle \caln(0,t^2 Q)(dy) =0,
\end{gather*}
  we obtain by the Cauchy-Schwarz inequality, for any $k \in K,$  $x \in H,$ 
\begin{gather*}
 |\nabla_k b_t (x)|_J^2  = \Big |  \frac{1}{t}  \int_{H}   \langle (t^{2} Q)^{-1/2} y ,  Q^{-1/2} k \rangle \, [f(x + y ) - f(x)]\caln(0,t^2 Q)(dy) \Big|_J^2   
 \\ 
  \le \| f\|_{C^{\alpha}_b(H, J)}^2   \frac{1}{t^2}  \int_{H} |  \langle (t^{2} Q)^{-1/2} y ,  Q^{-1/2} k \rangle|^2\,  \caln(0,t^2 Q)(dy) \cdot \int_{H} | t y |_J^{2\alpha} \, \caln(0, Q)(dy)  
  \\ \le c_{\alpha}    \| f\|_{C^{\alpha}_b(H, J)}^2  \, t^{2 \alpha -2}.   
\end{gather*} 
Collecting the previous estimates we get    \eqref{sfg} 
 \end{proof}

\subsection {Regularizing properties of the transition semigroup 
}


 We first collect some useful properties proved in Section \ref{refer} by classical  control theoretic arguments for  the abstract wave equation. 
  \begin{remark}\label{rem-wave operator2} {\em   
 For any $t>0$, we have  
 $e^{tA}( K) =  Q_t^{1/2}(H)=K$
 and  
  $Q_t^{-1/2} e^{tA} $      
 belongs to $L(K,H)$.  
  Let $T>0$. 
There exists $c= c_T>0$ such that  for any $t \in (0,T]$  
  we have   
\begin{gather} \label{se19}   
 |Q^{-1/2}_t e^{tA} k|_H \le   \frac{c}{t^{3/2}} |k|_K,\; \;\; k \in K = V \times U;  
\\
 \label{vv} |Q^{-1/2}_t e^{tA} Ga|_H     \le \frac{c}{t^{1/2}} |a|_U,\; \;\; a \in  U. \qed 
 \end{gather} 
  } 
  \end{remark}    
Let $\Phi \in B_b(H,J )$ and $x \in H$. Arguing as in the proof of Theorem 6.2.2 of \cite{DP3} (similar arguments are used in Section 9.4 of \cite{DPsecond} and Section 3 of \cite{DPF})    one can prove the existence of the directional derivative of    
$R_t[\Phi]$ along the directions of $K$: 
$$ 
 \lim_{s \to 0}  \frac{ R_t[\Phi](x+ s k )-R_t[\Phi](x)}{s}   
$$
\begin{equation}\label{nabla-R} 
= \nabla_k R_t[\Phi](x) =
 \int_H
\<Q_t^{-\frac{1}{2}}e^{tA}k,Q_t^{-\frac{1}{2}}y\>\, \Phi(e^{tA}x+y)\, \caln(0,Q_t)(dy),\;  { k\in K, } \,\;  t>0. 
\end{equation}  
 In the sequel we often write $\mu_t = \caln(0,Q_t)$ and $|\cdot|_H = |\cdot|$.

 In the next result we will use \eqref{nabla-R} together with the estimates in Remark \ref{rem-wave operator2}. 
   \begin{lemma}\label{teo:derHsemigroup} Assume Hypothesis \ref{base} and let  $R= (R_t)$ be the OU  semigroup defined 
in (\ref{wave-Hsemigroup}). If   $\Phi \in {B}_b(H,J)$ and $t>0$
then $R_t \Phi$ is $K$-differentiable on $H$.
The  directional derivative  $\nabla_k R_t[\Phi](x)\in J$   is given by \eqref{nabla-R}, for $x \in H$.   
  In particular    $R_t[\Phi]$  is $G$-differentiable on $H$;
further     
\begin{equation}\label{nablaG-R}  
\nabla^G_{a} R_t[\Phi](x) =\int_H 
\<Q_t^{-\frac{1}{2}}e^{tA}Ga,Q_t^{-\frac{1}{2}}y\> \, \Phi(e^{tA}x+y)\caln(0,Q_t)(dy),\,\;\;  a\in U. 
\end{equation}
Moreover,  for $t \in (0,T]$,  we have
\begin{align}\label{estimatess}
& \sup_{x \in H}| \nabla_k  R_t[\Phi](x)  \vert_J \leq \frac{c}{t^{\frac{3}{2}}}\Vert \Phi\Vert_\infty\vert k\vert_K,\,k\in K;
\\
 & \sup_{x \in H} \vert \nabla^G_{a} R_t[\Phi](x) \vert_J \leq 
 \frac{c}{t^{\frac{1}{2}}}\Vert \Phi\Vert_\infty\vert  G a \vert_{K}  = 
 \frac{c}{t^{\frac{1}{2}}}\Vert \Phi\Vert_\infty\vert  a \vert_{U},\, a \in U. \label{hi11} 
\end{align}
If in addition  $\Phi \in {C}_b(H,J)$  
then 
$ \nabla^K  R_t [\Phi] \in C_b(H, L(K,J)) $, $\nabla^G  R_t[\Phi] \in C_b(H, L(U,J)) $  for $t >0$.
\end{lemma}
\dim   
 Let us fix $t \in (0,T] $ and $x \in H$.  
The integral in (\ref{nabla-R}) defines a linear operator in
$L(K,J)$.       
Let    
\[
 I_{t,x}k:=\int_H
\<Q_t^{-\frac{1}{2}}e^{tA}k,Q_t^{-\frac{1}{2}}y\>\Phi(e^{tA}x+y)\caln(0,Q_t)(dy),\;\; k \in K.
\]
We have the well-known estimate (cf. the proof of Theorem 6.2.2  in \cite{DPsecond}) 
\begin{align} \label{rit1}
 \vert I_{t,x}k\vert_J    &\leq\int_H
\vert\<Q_t^{-\frac{1}{2}}e^{tA}k,Q_t^{-\frac{1}{2}}y\>\Phi(e^{tA}x+y)\vert_J   \caln(0,Q_t)(dy)
\\ 
\nonumber
&\leq   \Big(\int_H \vert \Phi(e^{tA}x+y) \vert^2_J 
\caln(0,Q_t)(dy) \Big)^{1/2} \cdot     
 \Big(\int_H\vert\<Q_t^{-\frac{1}{2}}e^{tA}k,Q_t^{-\frac{1}{2}}y\>\vert^2
\caln(0,Q_t)(dy) \Big)^{1/2}   
\\ \nonumber 
&\leq\Vert\Phi\Vert_\infty 
 \Big(\int_H\vert\<Q_t^{-\frac{1}{2}}e^{tA}k,Q_t^{-\frac{1}{2}}y\>\vert^2
\caln(0,Q_t)(dy) \Big)^{1/2} 
=\Vert\Phi\Vert_\infty\vert Q_t^{-\frac{1}{2}}e^{tA}k\vert_H
\leq   \frac{c}{t^{\frac{3}{2}}}\Vert \Phi\Vert_\infty\vert k\vert_K. 
\end{align}
 Similarly,  we get \eqref{hi11} using \eqref{vv} since
 \begin{equation}\label{s} 
 \int_H 
 \vert\<Q_t^{-\frac{1}{2}}e^{tA} G a,Q_t^{-\frac{1}{2}}y\>\vert^2
\caln(0,Q_t)(dy) \le |Q_t^{-\frac{1}{2}}e^{tA} G a|^2 \le  \frac{c |a|^2_U }{t},\;\;   a \in {U}.    
\end{equation}   
Computing the directional derivative as in \eqref{nabla-R}
we  
obtain 
the differentiability of $R_t[\Phi]$ along the directions of $K$ at $x$.
 We also obtain that $R_t[\Phi]$ is $G$-differentiable and $K$-differentiable  on $H$. 

If  $\Phi \in {C}_b(H,J)$    
 we  compute, for any $k \in K$, $|k|_K=1$, $z \in H$, 
\begin{gather*}
 |I_{t,x}k - I_{t, x+ z}k |^2_J  
\\
\leq \Big | \int_H
\<Q_t^{-\frac{1}{2}}e^{tA}k,Q_t^{-\frac{1}{2}}y\> \, [\Phi(e^{tA}x+y) - \Phi(e^{tA}(x+z) +y) ]\, \caln(0,Q_t)(dy)
\Big|^2_J
\\ 
\leq     
 \int_H\vert\<Q_t^{-\frac{1}{2}}e^{tA}k,Q_t^{-\frac{1}{2}}y\>\vert^2
\caln(0,Q_t)(dy)  \, \int_H | \Phi(e^{tA}x+y) - \Phi(e^{tA}x + e^{tA}z +y) |^2_J \caln(0,Q_t)(dy)
\\
\le \frac{c^2 |k|^2_K}{t^{3} } \, 
\int_H \sup_{x\in H}| \Phi(e^{tA}x+y) - \Phi(e^{tA}x + e^{tA}z +y) |^2_J \, \caln(0,Q_t)(dy)
\end{gather*}
and so by the dominated convergence  theorem  we obtain  easily  
\begin{equation} \label{s2}
\lim_{z \to 0}  \; \sup_{y \in H} \, \sup_{|k|_K =1} |I_{t,y}k - I_{t, y+ z}k | _J=0.
\end{equation}  
By \eqref{s2} we deduce that $
\nabla^K  R_t[\Phi] \in C_b(H, L(K,J))$ and   
 $\nabla^G  R_t[\Phi] \in C_b(H, L(U,J)) $.   \qed 

  
 In a similar way   we get
\begin{lemma}\label{der1}   
Under the assumptions of Lemma \ref{teo:derHsemigroup} 
let $t  >0$. 
 If $\Phi \in C_b(H, J)$ and $\xi \in U$ the mapping: 
$$ 
x \mapsto \nabla_{\xi}^G  R_t[\Phi](x)
$$  
with values in $J $ is  $K$-differentiable on $H$.    
The second order directional derivatives are
\begin{equation}\label{nabla-R1}    \nabla_k \nabla_{\xi}^G  R_t[\Phi](x)
=\int_H
\big (\< \Gamma_t k,Q_t^{-\frac{1}{2}}y\> \, \< \Gamma_t G \xi ,Q_t^{-\frac{1}{2}}y\> -  \< \Gamma_t k,  \Gamma_t G \xi \> \big)
\Phi(e^{tA}x+y)\mu_t(dy),  
\end{equation}
for $x\in H,  { k \in K}$, $\xi \in U$.
Moreover, for each $x \in H$, $k \in K$,        the map:  $ \xi \to  \nabla_k \nabla_{ \xi \,}^G R_t[\Phi](x)$ belongs to   $ L(U,J)$ and,  for any $t \in (0,T]$,   
\begin{gather}\label{sii}
  \sup_{x \in H}  \Vert  \nabla_k \nabla_{\cdot }^G R_t[\Phi](x) \Vert_{L(U,J)} \leq \frac{c_T |k|_{K}}{t^{2}}\Vert \Phi\Vert_\infty, \,  { k\in K. } 
\end{gather}  
\begin{gather}\label{estimates1}
\lim_{x \to 0}    \sup_{y \in H }  \, \|   \nabla^K \nabla_{\xi}^G R_t[\Phi](x +y) -  \nabla \nabla_{\xi}^G R_t[\Phi](y) \|_{L(K,J)}   
\\ \nonumber = 
\lim_{x \to 0}  \sup_{y \in H } \sup_{|k|_K = 1} \, |   \nabla_k \nabla_{\xi}^G R_t[\Phi](x +y) -  \nabla_k \nabla_{\xi}^G R_t[\Phi](y) |_{J}   =0,\;\; \xi \in U.
\end{gather}     
\end{lemma}
\begin{proof}  
Let us  fix $T>0$ and  $t \in (0,T]$, $x \in H$. Let  $\xi \in U$. First  define  $\Gamma_{t,x,k, \xi}$ as the integral in the right hand side of \eqref{nabla-R1}.
 Proceeding as in the proof      of Lemma \ref{teo:derHsemigroup} it is not difficult to show that 
\begin{equation} \label{233} 
k \mapsto \Gamma_{t,x,k, \cdot} 
\end{equation} 
is defined from  $K$ into $L(U,J)$.       Using \eqref{se19}  and the Cauchy-Schwarz inequality, we get  
\begin{gather} \label{w2} 
 |\Gamma_{t,x,k, \xi} |^2_J 
\le  \Big | \int_H
[\< \Gamma_t k ,Q_t^{-\frac{1}{2}}y\> \, \< \Gamma_t G \xi ,Q_t^{-\frac{1}{2}}y\> -  \< \Gamma_t k,  \Gamma_t G \xi  \> \big ]
  \,        
\Phi(e^{tA}x+y) \, \mu_t(dy) \Big|_J^2
\\ \nonumber 
\le \int_H
\big |\< \Gamma_t k ,Q_t^{-\frac{1}{2}}y\> \, \< \Gamma_t G \xi ,Q_t^{-\frac{1}{2}}y\> -  \< \Gamma_t k,  \Gamma_t G \xi  \> \big
 |^2 \mu_t(dy)
  \cdot \int_H
\,        
  |\Phi(e^{tA}x+y)|^2_J \, \mu_t(dy)
\\  \nonumber   
\le  \frac{c |k|^2_K }{t^{4}}   |\xi|^2_U 
\Vert \Phi\Vert_\infty^2.  
\end{gather} 
Thus we have proved \eqref{sii}.   
  Arguing as in Section 9.4 of \cite{DPsecond} and Section 3 in \cite{DPF} we find that   
\[
 \lim_{s \to 0}  \frac{ \nabla^G_{\xi} R_t[\Phi](x+ s k )- \nabla^G_{\xi} R_t[\Phi](x)}{s} 
 =   \Gamma_{t,x,k, \xi}, \;\; k \in K.
\]
Moreover, for any $z \in H$, $\xi \in U$,
\begin{gather*}
|\Gamma_{t,x, k, \xi} - \Gamma_{t, x+ z, k, \xi} |^2_J 
\\
   =  \Big |  \int_H
 \big (\< \Gamma_t k,Q_t^{-\frac{1}{2}}y\> \, \< \Gamma_t G \xi ,Q_t^{-\frac{1}{2}}y\> -  \< \Gamma_t k,  \Gamma_t G \xi \> \big) \, [ \Phi(e^{tA}x + e^{tA}z  +y) - \Phi(e^{tA}x   +y) ] \mu_t(dy) \Big|^2_J 
 \\
\le  \frac{c |k|^2_K}{t^{4}} 
 \vert  \xi \vert_{U}^2 \int_H | \Phi(e^{tA}x+y) - \Phi(e^{tA}x + e^{tA}z +y) |^2_J \mu_t(dy) 
\end{gather*} 
and so  
\begin{equation} \label{via}
 \lim_{z \to 0} \sup_{x \in H} \sup_{|k|_K =1} |\Gamma_{t,x, k, \xi} - \Gamma_{t, x+ z, k, \xi} |^2_J =0.
\end{equation}
This shows in particular that the mapping 
$
x \mapsto \nabla_{\xi}^G  R_t[\Phi](x)
$
verifies \eqref{estimates1}.   
\end{proof}

Now we  improve the previous estimates in the case when $\Phi$
is  H\"older continuous along the directions of $K$ using 
  Lemma \ref{interpola}.  
  \begin{lemma}\label{der12} Let $T>0.$ Under the assumptions of Lemma \ref{teo:derHsemigroup}   
let  $\Phi \in C^{\alpha}_K (H, J)$, $\alpha \in (0,1)$, see \eqref{xii}. We have all  the assertions of Lemmas \ref{teo:derHsemigroup} and \ref{der1} and  
the following new estimates, for $t \in (0,T]$,
\begin{align}\label{estimates12}   
& \sup_{x \in H} \vert \nabla_k R_t[\Phi](x)\vert_J \,  \leq \frac{c}{t^{\frac{3}{2} (1- \alpha)} }\Vert \Phi\Vert_{\alpha, K} \vert k\vert_K,\,k\in K;
\\   
 &  
 \sup_{x \in H}  \Vert  \nabla_k \nabla_{\cdot}^G R_t[\Phi](x) \Vert_{L(U,J)} 
\leq \frac{c}{t^{\frac{4 - 3\alpha}{2}}}\Vert \Phi\Vert_{\alpha,K} \vert k\vert_K,\,k\in K.  
\nonumber 
\end{align} 
\end{lemma}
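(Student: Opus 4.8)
The plan is to obtain both bounds by real interpolation in the argument $\Phi$ via the identity \eqref{inter}, so that the only substantive work is to produce two endpoint estimates for each operator: one on $C_b(H,H)$, already supplied by the previous lemmas, and one on $C^1_b(H,H)$. Fix $t\in(0,T]$ and $k\in H$, and regard
\[
\Phi\mapsto \nabla_k R_t[\Phi]\in C_b(H,H),\qquad \Phi\mapsto \nabla_k\nabla^G_\cdot R_t[\Phi]\in C_b(H,L_2(U,H))
\]
as linear operators (the images lie in these spaces by Lemmas \ref{teo:derHsemigroup} and \ref{der1}). On $C_b(H,H)$ they are bounded with norms $\le c\,t^{-3/2}|k|_H$ and $\le c\,t^{-2}|k|_H$, by \eqref{estimates} and \eqref{estimates1} respectively; these are the ``low-regularity'' endpoints.

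The core of the argument is the $C^1_b$ endpoint. For $\Phi\in C^1_b(H,H)$ I would differentiate under the Gaussian integral in \eqref{wave-Hsemigroup} by the chain rule, and apply the integration-by-parts formula \eqref{nabla-R} only to the ``expensive'' direction. For the first estimate this gives $\nabla_k R_t[\Phi](x)=\int_H \nabla\Phi(e^{tA}x+y)\,e^{tA}k\,\mu_t(dy)$, hence $|\nabla_k R_t[\Phi](x)|\le \|\nabla\Phi\|_\infty |e^{tA}k|_H=\|\nabla\Phi\|_\infty|k|_H$ since $e^{tA}$ is unitary; this is the $O(1)$-in-$t$ endpoint. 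For the second derivative the order of differentiation is the decisive point: I would let $\nabla_k$ fall directly on $\Phi$ (costing only $|e^{tA}k|_H=|k|_H$) and integrate by parts in the $G$-direction, so that the Cameron--Martin kernel carries $\Gamma_t G\xi$ and I may invoke the sharper bound \eqref{se2}, with its $t^{-1/2}$, rather than the $t^{-3/2}$ of \eqref{se1}. This yields
\[
\nabla_k\nabla^G_\xi R_t[\Phi](x)=\int_H \langle \Gamma_t G\xi,\,Q_t^{-1/2}y\rangle\,\nabla\Phi(e^{tA}x+y)\,e^{tA}k\,\mu_t(dy),
\]
and Cauchy--Schwarz in $\mu_t$, together with $\int_H|\langle\Gamma_t G\xi,Q_t^{-1/2}y\rangle|^2\mu_t(dy)=|\Gamma_t G\xi|_H^2\le c\,t^{-1}|\Lambda^{-1/2}\xi|_U^2$, gives $|\nabla_k\nabla^G_\xi R_t[\Phi](x)|\le c\,t^{-1/2}\|\nabla\Phi\|_\infty|k|_H|\Lambda^{-1/2}\xi|_U$. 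Summing over a basis $(e_j)$ of $U$ and using $\sum_j|\Lambda^{-1/2}e_j|_U^2=\|\Lambda^{-1/2}\|_{L_2(U,U)}^2<\infty$ produces the $C^1_b$ endpoint bound $\|\nabla_k\nabla^G_\cdot R_t[\Phi](x)\|_{L_2(U,H)}\le c\,t^{-1/2}\|\nabla\Phi\|_\infty|k|_H$, uniformly in $x$.

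With both endpoints in hand I would interpolate. Since each operator is bounded on $C_b(H,H)$ and on $C^1_b(H,H)$ into a fixed target space, by \eqref{inter} it is bounded from the intermediate space $C^\alpha_b(H,H)$ into that target, with norm controlled by the corresponding interpolated power of $t$. For the first operator this interpolates $t^{-3/2}$ and $t^{0}$ into $t^{-\frac32(1-\alpha)}$, and for the second it interpolates $t^{-2}$ and $t^{-1/2}$ into $t^{-\frac{4-3\alpha}{2}}$, which are precisely the estimates \eqref{estimates12} (with $\|\Phi\|_\alpha$ on the right, the norm of $C^\alpha_b$). The uniformity in $x$ of the endpoint bounds is what allows the operators to be viewed as maps into $C_b(H,H)$ and $C_b(H,L_2(U,H))$, so that interpolation is applied between genuine Banach spaces.

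I expect the single delicate step to be the $C^1_b$ estimate of the second derivative: specifically the decision to spend the $\nabla_k$ derivative directly on $\Phi$ and to integrate by parts \emph{only} in the $G$-direction, so that the favourable estimate \eqref{se2} rather than \eqref{se1} governs the singularity. Any other distribution of the two derivatives yields a strictly worse power of $t$ and fails to reproduce \eqref{estimates12}. The rest is bookkeeping: checking that the endpoint bounds hold uniformly in $x$ and that the vector-valued form of \eqref{inter} applies.
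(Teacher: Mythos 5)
Your proposal is correct and follows essentially the same route as the paper: the same two endpoint estimates (in particular the decisive $C^1_b$ bound for the mixed derivative, obtained by spending $\nabla_k$ on $\Phi$ and integrating by parts only in the $G$-direction so that \eqref{se2} rather than \eqref{se1} governs the singularity), followed by interpolation via \eqref{inter} and summation over a basis of $U$ using $\|\Lambda^{-1/2}\|_{L_2(U,U)}<\infty$. The one cosmetic difference is that the paper scalarizes first, applying the interpolation to the real-valued functions $\Phi_h=\langle\Phi(\cdot),h\rangle$ through the auxiliary scalar semigroup $(P_t)$ and only afterwards taking the supremum over $|h|_H=1$ and summing over $(e_j)$, so that \eqref{inter} is used exactly in the scalar form in which it is cited; your direct interpolation of the $H$-valued (resp.\ $L_2(U,H)$-valued) spaces is fine but relies on the vector-valued analogue of \eqref{inter}, which is true but not what the paper cites --- precisely the point you flag as needing a check.
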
  
 \begin{proof} Let us fix $t \in (0,T]$, $k \in K$ and $\xi \in U$. Using the OU process  $X $   defined by (\ref{mild}) we can define the Ornstein-Uhlenbeck semigroup $(P_t)$ acting on scalar functions $\phi \in B_b(H)$ (see \eqref{ptt1}).  
 
 For $h \in J$,  we  introduce the scalar function
$
\Phi_h(x) = \langle \Phi(x), h \rangle_J     $, $x \in H$, which belongs to 
$C_K^{\alpha}(H)$ with $\| \Phi_h \|_{{\alpha,K}} $ $\le \| \Phi \|_{{\alpha,K}} \, |h|_J $.
 We note arguing as in Section 3 of \cite{DPF} that   
$$  
\langle \nabla_k R_t[\Phi](x), h \rangle_J  = \nabla_k P_t[\Phi_h](x),\;\; x \in H.
$$ 
To prove the first estimate we  consider the linear operators
$$
 \nabla_k P_t : C^1_K (H) \to  {C}_b (H),\;\;\; \nabla_k P_t : {C}_b(H) \to  {C}_b (H).   
$$  
 These operators are well defined by Lemma \ref{teo:derHsemigroup}.   
 When $\phi \in C^1_K(H)$   we find that  (recall that $e^{tA} k \in K$) 
 \begin{gather*}    
  \nabla_k P_t[\phi](x) = 
\lim_{s \to 0}\int_H
 \frac{  \phi(e^{tA}x + s e^{tA}k +y) -  \phi(e^{tA}x  +y)} {s} \mu_t(dy)
=  \int_H 
   \nabla_{ e^{tA} k} \, \phi(e^{tA}x+y) \,      \mu_t(dy) 
\\   =
 \int_H
\langle    \nabla^{K } \, \phi(e^{tA}x+y), e^{tA} k \rangle_K \,  \mu_t(dy) 
\end{gather*} 
 (by the Riesz theorem we identify  $\nabla^{K } \, \phi(e^{tA}x+y)$ with an element in $K$).
 We get the estimate  
\begin{equation} \label{est1}
\sup_{x \in H} \vert \nabla_k P_t[\phi](x) \vert \leq C \Vert \nabla^K \phi\Vert_\infty \vert k\vert_K,\;\;\; \phi \in C^1_K(H).
\end{equation} 
On the other hand we have (cf. \eqref{estimatess})
\begin{align}\label{estimates24}
 \sup_{x \in H}| \nabla_k  P_t[f](x)  \vert \leq \frac{c}{t^{\frac{3}{2}}}\Vert f \Vert_\infty\vert k\vert_K,\;\;\; f \in C_b(H). 
\end{align} 
Interpolating between \eqref{est1} and \eqref{estimates24} (see Theorem A.1.1 in \cite{DPsecond}) we obtain that, for any $\alpha \in (0,1),$ 
$$   
 \nabla_k P_t  : (C_b(H) , C^1_K(H))_{\alpha , \infty} \, \to \,   {C}_b (H)   
$$ 
is linear and bounded; further 
$$
\sup_{x \in H} \vert \nabla_k P_t[\psi](x)\vert \leq \frac{\tilde c}{t^{\frac{3}{2} (1- \alpha)} }\Vert \psi\Vert_{ (C_b(H) , C^1_K(H))_{\alpha , \infty} } \vert k\vert_K, \;\;\; \psi \in  (C_b(H) , C^1_K(H))_{\alpha , \infty}.
$$  
  Now thanks to  Lemma \ref{interpola}    we deduce         
$$
\sup_{x \in H} \vert \nabla_k P_t[\psi](x)\vert \leq \frac{c}{t^{\frac{3}{2} (1- \alpha)} }\Vert \psi\Vert_{\alpha, K} \, \vert k\vert_K, \;\;\; \psi \in C^{\alpha}_K(H).  
$$ 
If we consider now $\psi = \Phi_h$, we have, for each $x \in H$, $h \in J$,
$$  
|\langle \nabla_k R_t[\Phi](x), h \rangle_J | =
\vert \nabla_k P_t[\Phi_h](x)\vert 
\leq \frac{c}{t^{\frac{3}{2} (1- \alpha)} }\Vert \Phi_h\Vert_{\alpha, K} \, \vert k\vert_K \le \frac{c}{t^{\frac{3}{2} (1- \alpha)} }\Vert \Phi\Vert_{\alpha, K} \, \vert k\vert_K |h|_J. 
$$ 
By taking the supremum over $\{h \in J \, : \,  |h|_J =1\}$ we get 
the first estimate in \eqref{estimates12}.
 
\vskip 1mm  
  To prove the second estimate we fix $t>0,$ $k \in K$, with $|k|_K=1$, $\xi \in U$ with $|\xi|_U=1$  and 
  argue as before.
    We first    introduce the following  linear operators (cf. Lemma \ref{der1}) 
    \begin {equation} \label{ope1}
  \nabla_k \nabla_{\xi}^G P_t : C^1_K(H) \to   {C}_b (H),\;\;\;  \nabla_k \nabla_{\xi}^G  P_t : {C}_b(H) \to  {C}_b (H).  
 \end{equation}  
When $\phi \in C^1_K(H)$ we know that
  \begin{equation} \label{ciser}  
  \nabla_k \nabla_{\xi}^G   P_t[\phi](x)
=\int_H
   \<\Gamma_t G \xi,Q_t^{-\frac{1}{2}}y\>  \nabla_{e^{tA}k} \phi(e^{tA}x+y)  \mu_t(dy). 
\end{equation}     
 Moreover, we have, with $\Gamma_t =  Q^{-1/2}_t e^{tA}$,  
  \begin{gather}    \nonumber     
  \sup_{x \in H}   | \nabla_k  \nabla^G_{\xi } P_t[\phi](x)|^2  \le  \int_H
|\< \Gamma_t   G \xi  ,Q_t^{-\frac{1}{2}}y\> |^2 \, | \nabla_{e^{tA}k} \phi(e^{tA}x+y)|^2    \,   \mu_t(dy)
\\ \label{css} 
\le   \Vert \phi \Vert_{C^1_K}^2 \,    \vert Q^{-1/2}_t e^{tA} G  \xi  \vert_{H}^2 \le  \frac{c}{t} |\xi|_U^2       \Vert \phi \Vert_{C^1_K}^2
  = \frac{c}{t}       \Vert \phi \Vert_{C^1_K}^2.
 \end{gather}        
 On the other hand if $\phi \in C_b(H)$ then   
 \begin{gather*}   
  \nabla_k \nabla_{\xi}^G  P_t[\phi](x)
=\int_H
\big (\< \Gamma_t k,Q_t^{-\frac{1}{2}}y\> \, \< \Gamma_t G \xi ,Q_t^{-\frac{1}{2}}y\> -  \< \Gamma_t k,  \Gamma_t G \xi \> \big)
\phi(e^{tA}x+y)\mu_t(dy),     
\end{gather*} 
  \begin{equation}\label{s7}
  \sup_{x \in H}  | \nabla_k  \nabla^G_{\xi } P_t[\phi](x)|^2 \le C \Vert \phi \Vert_{\infty}^2 
 \,   | \Gamma_t k|_H^2 \,  \vert Q^{-1/2}_t e^{tA} G  \xi  \vert_{H}^2 
 \le  
    c \Vert \phi \Vert_{\infty}^2
 \;  \frac{1}{t^{3}} |k|_K^2 \,   \frac{1}{t} |\xi|_U^2 \le c \Vert \phi \Vert_{\infty}^2
 \;  \frac{1}{t^{4}}. 
 \end{equation}   
   Interpolating between \eqref{css} and \eqref{s7}  as we have done before  we get 
    \begin{equation} \label{es23}
 \sup_{x \in H}  \vert  \nabla_k (\nabla_{\xi}^G P_t[\psi])(x) \vert_{}
\leq \frac{c}{t^{\frac{4 - 3\alpha}{2}}} \Vert \psi\Vert_{{\alpha, K  }},\;\; \psi \in C^{\alpha}_K(H).  
\end{equation}   
Now for  $x \in H$, $k \in K$,  $\Phi \in C^{\alpha}_K (H, J)$, we compute  
\begin{gather*} 
 \Vert  \nabla_k (\nabla_{}^G R_t[\Phi])(x) \Vert_{L(U,J)}^2
= 
\sup_{a \in U, \, |a|_U =1} |\nabla_k (\nabla_{a}^G R_t[\Phi])(x)|^2_J
\\ 
=  \sup_{|a|_U =1}  \, \, \, \sup_{h \in J,  \; |h|_J =1}| \langle 
\nabla_k (\nabla_{a}^G R_t[\Phi])(x), h \rangle_J  |^2
=  \sup_{|a|_U =1}   \sup_{|h|_J=1}|   
\nabla_k (\nabla_{a}^G P_t[\Phi_h])(x) |^2
\\
\le  \frac{c\,  \vert k\vert^2_K}{t^{{4 - 3\alpha}}}  
  \; \sup_{|h|_J=1} \Vert \Phi_h\Vert_{{\alpha, K  }}^2  \; 
\le \;   \frac{c\,  \vert k\vert^2_K  }{t^{{4 - 3\alpha}}} \, \Vert \Phi\Vert_{{\alpha, K}}^2.     
\end{gather*}    
 The second estimate in \eqref{estimates12} follows easily.
 \end{proof}

The next result is  crucial  for our approach to get pathwise uniqueness (see in particular Theorem \ref{forse111}   and   the proof of Theorem \ref{uni1}).   
 We can only prove the result when $\Phi \in  C^{\alpha}_b (H, J)$
using Lemma \ref{intervector}. 
 We do not know if such result holds more generally when $\Phi \in  C^{\alpha}_K (H, J)$.  

We fix an basis $(f_m)$ of $J$ and set $\Phi_m = \langle \Phi , f_m \rangle_J $. We will use the OU semigroup $(P_t)$ given  in \eqref{ptt1}.
   \begin{lemma}\label{der12ab} 
    Let $T>0.$  Under the assumptions of Lemma \ref{teo:derHsemigroup}   
 let  $\Phi \in C^{\alpha}_b (H, J)$,  $\alpha \in (0,1)$. We have, for $t  \in (0,T]$, $k \in K$,  
    \begin{equation}\label{as22}
 \Big ( \sum_{m \ge 1} \sup_{|a|_U =1}| \nabla_k \nabla_{a}^G  P_t[\Phi_m](x)|^2  \Big)^{1/2}\le
    \frac{C}{t^{\frac{4 - 3\alpha}{2}}}  |k|_K  \, \| \Phi\|_{ C^{\alpha}_b(H, J)}.     
\end{equation}
where $C >0$ is independent of $x,k, t$, $\Phi$ and the basis $(f_m)$ of $J$ . 
 \end{lemma}   
  \begin{proof} 
 We recall  that for $k \in K$, $a \in U$
    $$  \nabla_k \nabla_{a}^G  P_t[\Phi_m](x) =   
  \int_H
 \big (\< \Gamma_t k,Q_t^{-\frac{1}{2}}y\> \, \< \Gamma_t G a ,Q_t^{-\frac{1}{2}}y\> -  \< \Gamma_t k,  \Gamma_t G a \> \big) \,   \Phi_m (e^{tA}x   +y) \,  \mu_t(dy).
$$
 We fix $k \in K$.  We have  by the Cauchy-Schwarz inequality (cf. \eqref{w2}) for any $x \in H,$ $m \ge 1,$  
\begin{gather*}
 \sup_{|a|_U =1}| \nabla_k \nabla_{a}^G  P_t[\Phi_m](x)|^2 
 \\
 \le 
  \sup_{|a|_U =1}    \int_H
 \big | \big (\< \Gamma_t k,Q_t^{-\frac{1}{2}}y\> \, \< \Gamma_t G a ,Q_t^{-\frac{1}{2}}y\> -  \< \Gamma_t k,  \Gamma_t G a \> \big) \big |^2  \, \mu_t(dy)\, \cdot 
 \, \int_H |\Phi_m (e^{tA}x   +y)|^2 \,  \mu_t(dy)
 \\
 \le \frac{c}{t^4} |k|_K^2    \, \cdot 
 \, \int_H |\Phi_m (e^{tA}x   +y)|^2 \,  \mu_t(dy).
\end{gather*} 
   Hence  
  $$
  \sum_{m \ge 1}   \sup_{a \in U } \Big | \int_H
 \big (\< \Gamma_t k,Q_t^{-\frac{1}{2}}y\> \, \< \Gamma_t G a ,Q_t^{-\frac{1}{2}}y\> -  \< \Gamma_t k,  \Gamma_t G a \> \big) \,   \Phi_m (e^{tA}x   +y) \,  \mu_t(dy) \Big|^2
 $$   
$$
\le \frac{c}{t^4} |k|_K^2   \sum_{m \ge 1}  \int_H |\Phi_m (e^{tA}x   +y)|^2 \,  \mu_t(dy)  =  \frac{c}{t^4} |k|_K^2   \int_H |\Phi (e^{tA}x   +y)|^2_J \,  \mu_t(dy). 
$$
 It follows that 
\begin{equation}\label{se1}
 \sum_{m \ge 1} \sup_{|a|_U =1}| \nabla_k \nabla_{a}^G  P_t[\Phi_m](x)|^2 \le 
  \frac{c}{t^4} |k|_K^2 \, \| \Phi\|_{\infty}^2. 
\end{equation} 
 Now we fix also $x \in H$. For any $l \in J$ we define $\Phi_l : H \to \R$,
 \begin{gather*} 
 \Phi_l(y) = \langle \Phi(y) , l\rangle_J,\;\;\; l \in J. 
\end{gather*}
 We have  $\Phi_m =   \Phi_ {f_m}  $. 
  We can consider the linear operator  $T_{x,k}:   C_b^{} (H, J) \to  
 L_2 (J, U)$, 
 $$
  T_{x,k}(\Phi) (l) =  \nabla_k \nabla_{\cdot }^G  P_t[   \Phi_l ](x) \in U, \;\;\; \Phi \in C_b^{} (H, J),\;\; l \in J
  $$ 
 (we identify $U$ with $L(U, \R)$).  We have  
  $$  
  | T_{x,k}(\Phi) (l)|^2_U
  = \sup_{|a|_U =1} \Big |  \int_H
 \big (\< \Gamma_t k,Q_t^{-\frac{1}{2}}y\> \, \< \Gamma_t G a ,Q_t^{-\frac{1}{2}}y\> -  \< \Gamma_t k,  \Gamma_t G a \> \big) \,  \langle  \Phi (e^{tA}x   +y) , l \rangle_J  \,\,   \mu_t(dy) \Big|^2.  
 $$ 
 and 
 \begin{gather*}
 \| T_{x,k}(\Phi) \|_{ L_2 (J, U)}^2 =  \sum_{m \ge 1} | T_{x,k}(\Phi) (f_m)|^2_U
 =  \sum_{m \ge 1} \sup_{|a|_U=1} | \nabla_k \nabla_{a}^G  P_t[   \Phi_m ](x)|^2.
\end{gather*}
 By the bound  in \eqref{se1} we deduce that $\| T_{x,k}(\Phi) \|_{ L_2 (J, U)}^2 \le  \frac{c}{t^4} |k|_K^2 \, \| \Phi\|_{\infty}^2 $, where $c$ is independent of $t>0$, $x \in H$, $\Phi$  and $k \in K.$     
  
 The linear operator: $T_{x,k}:   C_b^{} (H, J) \to  
 L_2 (J, U)$ is well defined and continuous;  we have 
 \begin{equation}\label{prima}
 \| T_{x,k} \|_{L (  C_b^{} (H, J),L_2 (J, U)  )} \le   \frac{c}{t^2} |k|_K. 
\end{equation} 
 Now if $\Phi \in C^1_{K}(H, J)$ we find 
 for $k \in K$, $a \in U$ (cf. \eqref{ciser})
  $$ 
  \nabla_k \nabla_{a}^G   P_t[\Phi_m ](x) 
= \int_H 
   \<\Gamma_t G a,Q_t^{-\frac{1}{2}}y\> \,  \langle   \nabla^{K}\Phi_m(e^{tA}x+y),
   {e^{tA}k} \rangle_K \;  \mu_t(dy).  
$$
  We find  (cf. \eqref{s})
\begin{gather*}    
 \sup_{a \in U, |a|_U =1}| \nabla_k \nabla_{a}^G  P_t[\Phi_m](x)|^2 
 \\
 \le 
  \sup_{|a|_U =1}    \int_H
 \big | \< \Gamma_t G a ,Q_t^{-\frac{1}{2}}y\> \big| ^2  \, \mu_t(dy)\, \cdot 
 \, \int_H |  \langle   \nabla^{K}\Phi_m(e^{tA}x+y),
   {e^{tA}k} \rangle_K |^2 \,  \mu_t(dy)  
 \\
 \le \frac{c}{t}     \, \cdot 
 \, \int_H |  \langle   \nabla^{K}\Phi_m(e^{tA}x+y),
   {e^{tA}k} \rangle_K   |^2 \,  \mu_t(dy)
\end{gather*} 
and  so 
  $$
  \sum_{m \ge 1}   \sup_{ |a|_U =1 } | \nabla_k \nabla_{a}^G  P_t[\Phi_m](x)|^2 
 $$     
$$ 
\le \frac{c}{t}   \sum_{m \ge 1}  
\int_H | \langle   \nabla^{K}\Phi_m(e^{tA}x+y),  
   {e^{tA}k} \rangle_K   |^2 \,  \mu_t(dy) 
=  \frac{c}{t}    \int_H |  \nabla^{K} \Phi (e^{tA}x   +y) [e^{tA}k] |^2_J \,  \mu_t(dy). 
$$
  Since $|  \nabla^{K} \Phi (e^{tA}x   +y) [e^{tA}k] |_J  \le 
   \|  \nabla^{K} \Phi (e^{tA}x   +y)\|_{L(K, J)} \,  |k |_K
  $, $t \ge 0,$  it follows that 
\begin{equation}\label{se15}
 \sum_{m \ge 1} \sup_{|a|_U =1}| \nabla_k \nabla_{a}^G  P_t[\Phi_m](x)|^2 \le 
  \frac{c}{t} |k|_K^2 \, \| \Phi\|_{C^1_K(H, J)}^2. 
\end{equation} 
Hence 
 \begin{equation}\label{prima1}
 \| T_{x,k} \|_{L (  C_K^{1} (H, J),L_2 (J, U)  )} \le   \frac{c}{t^{1/2}} |k|_K.  
\end{equation}   
  Interpolating,  between \eqref{prima} and \eqref{prima1} as in the proof of Lemma \ref{der12}, for any $\Phi \in \big(C_b(H,J) , C^1_K(H,J) \big)_{\alpha , \infty}$, we get   
\begin{equation}\label{as2}
\Big ( \sum_{m \ge 1} \sup_{|a|_U =1}| \nabla_k \nabla_{a}^G  P_t[\Phi_m](x)|^2  \Big)^{1/2} \, \le
    \frac{c}{t^{\frac{4 - 3\alpha}{2}}}  |k|_K   \, \| \Phi\|_{ \big(C_b(H,J) , C^1_K(H,J) \big)_{\alpha , \infty}} . 
\end{equation}   
Now we consider any  $\Phi \in  C^{\alpha}_b(H, J) \subset  \big(C_b(H,J) , C^1_K(H,J) \big)_{\alpha , \infty} $
  (see   Lemma \ref{intervector}). We finally  obtain  
  \begin{equation*}
 \Big (\sum_{m \ge 1} \sup_{|a|_U =1}| \nabla_k \nabla_{a}^G  P_t[\Phi_m](x)|^2 \Big)^{1/2} \, \le
    \frac{C}{t^{\frac{4 - 3\alpha}{2}}}  |k|_K \, \| \Phi\|_{ C^{\alpha}_b(H, J),}\;\; t \in (0,T] . 
\end{equation*}  
\end{proof} 
   
\begin{remark} \label{hilsch}{\em We provide here  an  equivalent formulation of 
Lemma \ref{der12ab}    when $J=K$ (this   will be used   in the proof of Theorem \ref{forse111}).
 Let $\Phi \in C^{\alpha}_b (H, K)$,  $\alpha \in (0,1)$. 
Let us fix $t  \in (0,T]$, $x \in H$ and  $k \in K$.  

\vskip 1mm 
Recall the notation  $\Phi_l $ $
 = \langle \Phi , l \rangle_K $, $l \in K.$
 We know that 
 the linear operator $\nabla_k \nabla_{}^G  R_t[ \Phi](x)$ is well defined 
 from $K$ into $U$  by the formula
\begin{gather*}
 l \mapsto \nabla_k \nabla_{}^G  P_t[ \Phi_l](x)
\end{gather*}
(cf. Lemma \ref{der1};  note that the operator $a \mapsto  \nabla_k \nabla_{a}^G  P_t[ \Phi_l](x)$ belongs to 
$L (U , \R)$ and so it can be identified with an element of $U$). Assertion 
of Lemma \ref{der12ab} is equivalent to say that 
\begin{gather}\label{l222} 
 \nabla_k \nabla_{}^G  R_t[ \Phi](x) \in  L_2(K,U) \;\; \text{and}
\\ \nonumber  
\|  \nabla_k \nabla_{}^G  R_t[ \Phi](x) \|_{L_2(K,U)} 
=  \Big (\sum_{m \ge 1} \sup_{|a|_U =1}| \nabla_k \nabla_{a}^G  P_t[\langle \Phi , f_m \rangle_K ](x)|^2 \Big)^{1/2} 
\\ \nonumber \le    \frac{C}{t^{\frac{4 - 3\alpha}{2}}}  |k|_K \, \| \Phi\|_{ C^{\alpha}_b(H, K),}\;\; t \in (0,T];
\end{gather}
here $(f_m)$ is a basis in $K$. Recall that  
 the constant  $C >0$ is independent of $x,k, t$ and $\Phi$. 
   } 
\end{remark}

 We will also use   the following  additional regularity result. 
   \begin{lemma}\label{der12a} Let $T>0.$  Under the assumptions of Lemma \ref{teo:derHsemigroup}   
 let  $\Phi \in C^{\alpha}_b (H, J)$,  $\alpha \in (0,1)$. We have  the following estimate, for $t \in (0,T]$, $x, y \in H$
\begin{align}\label{est12}    
   \Vert  \nabla_{}^G R_t[\Phi](x) -  \nabla_{}^G R_t[\Phi](x')  \Vert_{L(U,J)} 
\leq \frac{c}{t^{\frac{1}{2}}} |x-x'|^{\alpha}  \Vert \Phi\Vert_{\alpha} 
    \end{align}   
\end{lemma} 
\begin{proof} The assertion follows easily by the formula
\begin{gather*}
\nabla^G_{a} R_t[\Phi](x) -  \nabla^G_{a} R_t[\Phi](x') =\int_H 
\<Q_t^{-\frac{1}{2}}e^{tA}Ga,Q_t^{-\frac{1}{2}}y\> \, [\Phi(e^{tA}x+y) - 
 [\Phi(e^{tA}x'+y)]
\caln(0,Q_t)(dy),
\end{gather*} 
 $ a\in U $, using that 
 \begin{gather*}
 |\Phi(e^{tA}x+y) - 
 \Phi(e^{tA}x'+y)|_J \le \Vert \Phi\Vert_{\alpha}  |x-x'|^{\alpha},\;\; t \ge 0, \, y \in H.
\end{gather*}
 \end{proof}   
 In the sequel we will apply  the previous regularity results when 
\begin{equation} \label{jjj}
J = K = V \times U. 
\end{equation} 
 Let $T>0.$ 
  We consider  the following integral equation 
which will be important in  the sequel:
\begin{align} \label{bsde2uno}
u(t,x)
=&\int_t^T R_{s-t}\left[e^{-(s-t) {A}}G B(s,\cdot)\right](x)\,ds+
\int_t^T R_{s-t}\left[e^{-(s-t){A}} \nabla^Gu(s,\cdot) B(s,\cdot) \right](x)\,ds, 
\end{align}
where $u(t,x)$ takes values in $K$ and  $ \nabla^G u(s,x)B(s,x) =  \nabla_{GB(s,x)}u(s,x) \in K $, $(s,x) \in [0,T] \times H$.  

Using the previous lemmas, we will solve the equation in the Banach space $E_0 $   
consisting of all $u \in B_b([0,T]\times H, K)$
such that $u(t, \cdot) $ is   
  { $K$-differentiable  on $H$, with
    $\nabla^{K} u \in B_b([0,T]\times H, L(K,K))$.   } 
     
 We also require that  there exists $C =C_T >0$  such that for any $x, y \in H$ 
 \begin{equation}\label{ass}
\sup_{s \in [0,T]}|\nabla^G u(s, x) - \nabla^G u(s, y)|_{L(U,K)}  \,  \le  \, C |x-y|_H^{\alpha},  
\end{equation}
 where $\alpha \in (2/3, 1)$ is given in Hypothesis \ref{ip-b1}.   
         Finally, to define $E_0$ we require that,             
 for each $\xi \in U$, $t \in [0,T]$, the mapping:  
\begin{equation} \label{ci111}
 x \mapsto \nabla_{\xi}^G u (t,x) \;\; \text{is $K$-differentiable on $H$     
 }    
 \end{equation}    
 $$ \text{
 with  $ \;\; \sup_{(t, x) \in [0,T] \times H } \; \sup_{|\xi|_U =1}  \| \nabla^K 
 \nabla^G_{\xi} u(t,x) \Vert_{L\left(K, K \right)} < \infty.$ }
$$        
Let $\beta \ge 0$ to be  fixed later. It is not difficult to prove that $E_0$ is a Banach space endowed with the norm
$$   
\| u\|_{E_0, \beta} =  
 \sup_{(t, x) \in [0,T] \times H} e^{\beta t} | u (t,x) |_K
+   \sup_{(t, x) \in [0,T] \times H} e^{\beta t} \| \nabla^K u (t,x) \|_{L(K,K)} 
$$
\begin {equation} \label{bett}
 + \sup_{t \in [0,T]} e^{\beta t} \| \nabla^G u (t, \cdot ) \|_{C^{\alpha}_b(H, L(U,K))}     + \sup_{(t, x ) \in [0,T] \times H} \, \sup_{|\xi|_U =1} e^{\beta t}  \Vert \nabla^K \nabla^G_{\xi} u(t,x) \Vert_{L\left(K, K \right)}.
\end{equation}

\begin{lemma} \label{forse13} Let Hypotheses
\ref{base} and  \ref{ip-b1} hold true. 
There exists a unique solution $u \in E_0$  to \eqref{bsde2uno}.  Moreover,  there exists  a function $h(r) = h(r,\alpha) > 0$, $r\ge 0$, such that 
$h(r) \to 0$ as $r \to 0^+$ and  
 if $S \in [0,T]$ verifies 
 $h(T- S) \cdot ( \sup_{t \in [0,T]}\| B(t,\cdot) \|_{\alpha  }) $ $  \le 1/4$,  then 
\begin{equation} \label{de22}
 \sup_{t\in [S,T],\,x\in H} \|\nabla^K    u  (t,x) \|_{L(K,K)} \leq 1/3. 
\end{equation} 
\end{lemma}        

\begin{proof}  We  introduce the following operator ${\cal T}$ defined on $E_0$:
$$
{\cal T} u (t,x) = \int_t^T R_{s-t}\left[e^{-(s-t){A}}G B(s,\cdot)\right](x)\,ds+
\int_t^T R_{s-t}\left[e^{-(s-t){A}} \nabla^G u(s,\cdot) B(s,\cdot)\right](x)\,ds, 
$$
 $u \in E_0,$ $(t,x) \in [0,T] \times H$. In order to apply Lemmas \ref{der12} and \ref{der12a} with $J=K$     we first check the H\"older regularity
  of the terms $G B(s, \cdot)$ and $ \nabla^G u(s,\cdot) B(s,\cdot).$
 
  Since, for any $x \in H$, $h \in H,$  $s \in [0,T]$,
  \begin{gather*}
 |G B(s, x) - G B(s, x+h ) |_K  = | B(s, x) - B(s, x+ h) |_U \le C |h|_H^{\alpha}   
\end{gather*}  
 we get  that $G B \in B_b([0,T]; C^{\alpha}_b(H,K))$, see the definition  in  Hypothesis \eqref{ip-b1}.  
  We need to prove that also 
\begin{equation} \label{sww}  
 \nabla^G u(s,\cdot)B(s,\cdot) 
 \;\; \text{ belongs to  $B_b([0,T]; C^{\alpha}_b(H,K)).$  } 
\end{equation} 
 We write  for $x, y \in H$ with  $|x-y|_H \le 1$
\begin{gather*}
[\nabla^G u(s, x)B(s,x) - \nabla^G u(s, y)B(s,x)] +  \nabla^G u(s, y)[B(s,x)   - B(s,y)].      
\end{gather*} 
 We  bound the second term with    
\begin{gather*}
  | \nabla^G u(s, y)[B(s,x)   - B(s,y)] |_K  \le   \sup_{(t, x) \in [0,T] \times H} e^{\beta t} \| \nabla^G u (t,x) \|_{L(U,K)} \,  | B(s,x)   - B(s,y)|_U
   \le  C |x-y|_H^{\alpha}.
\end{gather*}  
Moreover  we have   
 \begin{gather*}
|[\nabla^G u(s, x) - \nabla^G u(s, y)]  B(s,x)|_K \le |B(s,x)|_U
 |\nabla^G u(s, x) - \nabla^G u(s, y)|_{L(K,U)} 
 \\
 \le \| B \|_{\infty} \,\sup_{t \in [0,T]} e^{\beta t} \| \nabla^G u (t, \cdot ) \|_{C^{\alpha}_b(H, L(U,K))}    \, |x-y|_H^{\alpha}. 
\end{gather*}
  By   Lemmas \ref{der12} and \ref{der12a},  using  that $\alpha >2/3$, it is not difficult to prove that ${\cal T}: E_0 \to E_0$. Let us check that for a suitable value of $\beta $  the map ${\cal  T}$ is a strict contraction (see \eqref{bett}).
We  have to  consider $\| {\cal T}u_1 - {\cal T}u_2\|_{E_0, \beta}$, $u_1, u_2 \in E_0$; we   only treat
 the  term   
$$
\sup_{t,x} \sup_{|\xi|_U =1} e^{\beta t} \| \nabla^K  \nabla_{\xi}^G [{\cal T} u_1 - {\cal T} u_2] (t,x)  
 \|_{L(K,K)}.
$$
Indeed the other terms of $\| {\cal T}u_1 - {\cal T}u_2\|_{E_0, \beta}$ can be estimated in a similar way.  
We have  for any $k \in K$ with $|k|_K =1$ 
 $$
e^{\beta t} | \nabla_k \nabla_{\xi}^G [{\cal T} u_1(t,x) -
 {\cal T} u_2(t,x)] \, |_{K}   
  $$
  $$  
\le \int_t^T e^{-\beta (s-t)} \Big |  \nabla_k \nabla^G_{\xi}  R_{s-t}\left[e^{-(s-t){A}}\, e^{\beta s} 
\{ \nabla^G u_1(s,\cdot) - \nabla^G u_2(s,\cdot) \}  B(s,\cdot) \right](x) \,\Big |_{K}ds 
$$       
$$
\le \int_t^T \frac{c  e^{- \beta  (s-t)}}{(s-t)^{\frac{4 - 3\alpha}{2}}} ds \sup_{t \in [0,T]}\| B(t,\cdot) \|_{\alpha }  \| u_1 - u_2 \|_{E_0, \beta}
\le     
 C_{\beta,T}     \sup_{t \in [0,T]}\| B(t,\cdot) \|_{\alpha}  \| u_1 - u_2 \|_{E_0, \beta}, 
$$
where $C_{\beta,T} >0$ tends to $0$ as $\beta \to + \infty.$  
 Therefore taking the supremum over $k \in K$ with $|k|_K =1$ we infer
$$
e^{\beta t} \| \nabla^K \nabla_{\xi}^G [{\cal T} u_1(t,x) -
 {\cal T} u_2(t,x)] \|_{L(K,K)}  
\le     
 C_{\beta,T}     \sup_{t \in [0,T]}\| B(t,\cdot) \|_{\alpha}  \| u_1 - u_2 \|_{E_0, \beta}. 
$$
 Choosing $\beta$ large enough,  we can apply the   fixed point theorem
and obtain  that there exists a unique solution $u \in E_0$.
 
  In order to prove \eqref{de22},  we first introduce $\| u\|_{E_0, 0, S,T}$ which is defined as $\| u\|_{E_0, 0}$ in \eqref{bett} (with $\beta =0)$ but taking all the supremums over $[S, T] \times H$ instead of $[0,T] \times H$.
  We proceed as before:  
\begin{gather*}
 \| u\|_{E_0,0, S, T}  \le 
\sup_{t \in [S,T]}\int_t^T \frac{c  }{(s-t)^{\frac{4 - 3\alpha}{2}}} ds \, \sup_{t \in [0,T]}\| B(t,\cdot) \|_{\alpha, K }  \, (\| u_0 \|_{E_0, 0, S,T} + 1)  
\\ \le
h(T-S) \, \sup_{t \in [0,T]}\| B(t,\cdot) \|_{\alpha }  \, (\| u_0 \|_{E_0, 0, S,T} + 1),
\end{gather*}
where $ 
 h( r) = \int_0^r \frac{c  }{s^{\frac{4 - 3\alpha}{2}}} ds
$; now \eqref{de22} follows since we have   $\frac{3}{4}\| u\|_{E_0,0, S,T} \le 1/4.$
\end{proof}
   
 To prove the next result we will apply Lemma \ref{der12ab} (see also Remark \ref{hilsch}). To this purpose we fix  a basis $(f_m)$ of $K$ and set $u_m = \langle u , f_m \rangle_K $, where $u$ is the solution given in Lemma \ref{forse13}.    
   \begin{theorem} \label{forse111} Let Hypotheses
\ref{base} and  \ref{ip-b1} hold true. 
 Then the unique solution $u \in E_0$  to \eqref{bsde2uno} (see Lemma \ref{forse13})  verifies in addition  
    \begin{equation}\label{andiamo}
 \Big ( \sum_{m \ge 1} \sup_{|a|_U =1}| \nabla_k \nabla_{a}^G  u_m (t,x)|^2  \Big)^{1/2}\le
    C |k|_K  \, \sup_{t \in [0,T]}\| B(t, \cdot )\|_{ C^{\alpha}_b(H, U)},\;\; \;\;\; k \in K,     
\end{equation}  
where $C >0$ is independent of $x \in H, k \in K$, $t \in [0,T]$, $u$ and the basis $(f_m)$ in $K$.    
\end{theorem} 
\begin{proof} First following the proof of Lemma \ref{forse13} it is not difficult to  prove that there exists $C_T>0$ independent of $u$ such that 
\begin{equation}\label{sff}
 \sup_{t \in [0,T]}  \| \nabla^G u (t, \cdot ) \|_{C^{\alpha}_b(H, L(U,K))} \le  C_T \sup_{t \in [0,T]}\| B(t, \cdot )\|_{ C^{\alpha}_b(H, U)}.
\end{equation}
  Then we write $u =  v + w$, where  
$$
 v(t,x) = \int_t^T R_{s-t}\left[e^{-(s-t){A}}G B(s,\cdot)\right](x)\,ds,\;\;
  w(t,x)=
\int_t^T R_{s-t}\left[e^{-(s-t){A}} \nabla^G u(s,\cdot) B(s,\cdot)\right](x)\,ds. 
$$
We need to prove that \eqref{andiamo} holds when $u$ is replaced by $v$ and $w$. We  concentrate on  $w$ (the proof  for  $v$ is similar).
 We define, for any $s \in [0,T]$, $x\in H,$ $\Phi (s, x) = e^{-(s-t){A}} \nabla^G u(s ,x) B(s,x)$.
 
 Note that $\Phi \in  B_b([0,T]; C^{\alpha}_b(H,K)) $ (see the computations to verify \eqref{sww}). 
   By \eqref{sff} we obtain  
\begin{gather*}  
\sup_{s \in [0,T]}  \| \Phi(s, \cdot )\|_{ C^{\alpha}_b(H, K)} \le C_T \sup_{t \in [0,T]}\| B(t, \cdot )\|_{ C^{\alpha}_b(H, U)} \;\; 
\text{with}
\\ 
 w(t,x)= 
\int_t^T R_{s-t}[ \Phi(s,\cdot)](x)\,ds,\;\;\; t \in [0,T],\; x \in H. 
\end{gather*}  
Let us fix $k \in K$ and $t \in [0,T)$;  set  $w_m = \langle w , f_m \rangle_K $. 
By Lemma \ref{der12ab} (see also Remark \ref{hilsch}) 
 we know that, for any $s \in (t,T]$, the linear operator $\nabla_k \nabla_{}^G  R_{s-t}[ \Phi(s, \cdot)](x)$
  is well defined 
 from $K$ into $U$  by the formula
 $
 l \mapsto \nabla_k \nabla_{}^G  P_{s-t}[ \langle \Phi(s, \cdot ), l \rangle_K ](x) 
$
 and it is a Hilbert-Schmidt operator.  Moreover,
 \begin{gather}\label{aqe}
\|  \nabla_k \nabla_{}^G  R_{s-t}[ \Phi(s, \cdot)](x) \|_{L_2(K,U)} 
=  \Big (\sum_{m \ge 1} \sup_{|a|_U =1}| \nabla_k \nabla_{a}^G  P_{s-t}[\langle \Phi (s, \cdot), f_m \rangle_K ](x)|^2 \Big)^{1/2} 
\\ \nonumber 
 \le    \frac{C}{(s -t)^{\frac{4 - 3\alpha}{2}}}  |k|_K \, \sup_{r \in [0,T]}\| \Phi(r, \cdot)\|_{ C^{\alpha}_b(H, K)}
  \le   \frac{c  |k|_K }{(s -t)^{\frac{4 - 3\alpha}{2}}}  \sup_{t \in [0,T]}\| B(t, \cdot )\|_{ C^{\alpha}_b(H, U)}, 
 \;\; s > t. 
\end{gather} 
Note that also the linear operator $\nabla_k \nabla_{}^G w(t, x)$ is well defined from $K  $ into $U$ ($l \mapsto \nabla_k \nabla_{}^G   [\langle w(s, \cdot ), l \rangle_K ](x) $).

Moreover, we have, for any $x \in H,$
\begin{gather*}
 \nabla_k \nabla_{}^G w(t, x) = 
 \int_t^T  \nabla_k \nabla_{}^G  R_{s-t}[ \Phi(s, \cdot)](x) ds. 
\end{gather*}
Using \eqref{aqe} we deduce that $\nabla_k \nabla_{}^G w(t, x) \in L_2(K,U)$ and 
\begin{gather*}
   \|  \nabla_k \nabla_{}^G w(t, x)\|_{L_2(K,U)}
\le 
 \int_t^T  \|  \nabla_k \nabla_{}^G  R_{s-t}[ \Phi(s, \cdot)](x) \|_{L_2(K,U)} ds \le C_{\alpha, T} |k|_K \,   \sup_{t \in [0,T]}\| B(t, \cdot )\|_{ C^{\alpha}_b(H, U)},
\end{gather*}
where $ C_{\alpha, T} >0 $ is independent of $x \in H$, $t \in [0,T]$, $B$ and $k \in K$. This completes the proof.  
 \end{proof}
  
\begin{remark}  {\em   If try to prove the previous theorem using 
  Lemma \ref{interpola} instead of Lemma \ref{intervector} we could eventually obtain a bound for 
  $\sum_{m \ge 1} \sup_{|a|_U =1}| \nabla_k \nabla_{a}^G  u_m (t,x)|^2 $ 
  by requiring that 
  $$
 \sum_{j \ge 1}  \sup_{t \in [0,T]}  \| \langle B(t, \cdot), e_j \rangle_U \|_{C^{\alpha}_K(H)}^2 < \infty.
  $$
  However this condition is restrictive when it is applied to the examples of Section 3. 
    }
\end{remark}

\section{The related infinite dimensional forward-backward system }\label{Sez-FBSDE}

In a complete probability space $(\Omega, \calf, \P)$, let us consider the following forward-backward system (FBSDE) 
 \begin{equation}
  \left\lbrace\begin{array} 
 [c]{l}%
 d\Xi_\tau^{t,x}  =A\Xi_\tau^{t,x} d\tau+Gd W_\tau
 ,\text{ \ \ \ }\tau\in\left[  t,T\right], \\ 
 \dis
 \Xi_{t}^{t,x} =x,\\ 
 \dis
  -d Y_{\tau}^{t,x}=-{A}Y_\tau^{t,x}+GB(\tau,\Xi^{t,x}_\tau)\;d\tau
  + Z^{t,x}_{\tau} \, B(\tau,\Xi^{t,x}_\tau)  d\tau
  - Z^{t,x}_{\tau}\;dW_\tau,
  \qquad \tau\in [0,T],
   \\\dis
 Y_{T}^{t,x}=0,
 \end{array}
 \right.  \label{fbsde2-wave-notilde}%
 \end{equation}
 where $t \in [0,T]$, $x \in H$, and the forward equation is the abstract formulation of the wave equation (\ref{wave1}) given in (\ref{wave eq ab det}) under Hypothesis \ref{base}; here $B: [0,T] \times H \to U$ is (Borel) measurable  and satisfies
\begin{equation}
\label{bbb}
B(t, \cdot) \in C_b(H,U),\;\; t \in [0,T],\;\; \| B\|_{\infty} = \sup_{t \in [0,T] \times H} |B(t,x)|_U < \infty
\end{equation} 
(clearly, Hypothesis \ref{ip-b1} implies \eqref{bbb}); 
$G $ is defined by (\ref{G}) and $W$ is a cylindrical Wiener process in $U$.

 We extend $\Xi^{t,x}$ to the whole $[0,T]$ by setting $\Xi_\tau^{t,x}=x$ for $0\leq\tau\leq t$, in order to have $(Y^{t,x},Z^{t,x})$ well defined on  $[0,T]$.
The precise meaning of the BSDE in (\ref{fbsde2-wave-notilde}) is given by its mild formulation: for $ \tau\in [0,T]$
 \begin{equation} \label{bsde2-markovian-wave-notilde-mild} Y_{\tau}^{t,x}=\int_\tau^Te^{-(s-\tau){A}}G B(s,\Xi^{t,x}_s)\,ds+\int_\tau^Te^{-(s-\tau){A}} Z_s^{t,x}\, B(s,\Xi^{t,x}_s)\,ds- \int_\tau^Te^{-(s-\tau){A}} Z^{t,x}_{s}\;d W_s,
\end{equation}
 $\P$-a.s.
 (cf. \cite{HuPeng}, \cite{Gua1} and the references therein).
 The solution of \eqref{bsde2-markovian-wave-notilde-mild} will be a pair of processes $(Y^{t,x}, Z^{t,x})$ (see Proposition \ref{Teo:ex-regdep-BSDE}).
 Notice that in order to give sense to the BSDE in (\ref{fbsde2-wave-notilde})
as it is done in (\ref{bsde2-markovian-wave-notilde-mild}), we need that $A$ is the generator of a $C_0$-group of bounded linear operators, so that $-A$
is the generator of a $C_0$-semigroup of bounded linear operators. 

\noindent We also refer to this BSDE as BSDE in a Markovian framework, since the pair of processes $(Y^{t,x},Z^{t,x})$ depends on the Markov process $\Xi^{t,x}$.
  We endow $(\Omega, \calf, \P)$ with the natural filtration $({\cal F}_t^W)$ of $W$, augmented in the usual way with the family of $\P$-null sets of $\calf$. All the concepts of measurability, e.g. predictability, are referred to this filtration.
  
      Notice that  the forward equation evolve in $H$ as we have already discussed in Section \ref{sec-prel}, while the backward equation
takes values in $K$, indeed the term $GB(\cdot, \cdot)$ take values in $K$, and we can give an explicit representation of the solution in terms of $GB(\cdot)$, see \cite{HuPeng}, Sections 2 and 3.

  We denote by $L^2_\calp(\Omega, C([0,T],{K}))$ the  space of all predictable ${K}$-valued processes $Y$ with continuous paths and such that
$$
 \E [ \sup_{\tau\in[0,T]}\vert Y^{}_\tau\vert^2] = \| Y\|_{L^2_\calp(\Omega, C([0,T],{K}))}^2 < \infty.
$$
The space $L^2_\calp(\Omega, C([0,T],{K}))$ is a Banach space endowed with the norm $\| \cdot \|_{ L^2_\calp(\Omega, C([0,T],{K}))} $. On the other hand,    
 $L^2_\calp(\Omega\times[0,T],L_2(U,{K}))$  is the usual $L^2$-space of all predictable processes $Z$ with values in $L_2(U,{K})$.
 We also define the space $\calg^{0,1}([0,T]\times {H},{K})$, as in \cite{fute} Section 2.2, as the subspace of
 $C([0,T]\times {H,K})$ consisting of all functions $f$
which are G\^ateaux differentiable with respect to $x$ and such that the map $\nabla_xf:[0,T]\times H\to L({H},{K})$ is strongly continuous. Similarly one can define the space  $\calg^{0,1}([0,T]\times {H},\R) \subset C([0,T]\times {H},\R).$ 

Following \cite{HuPeng},  it is immediate to get existence and pathwise uniqueness of a solution  $(Y^{t,x},Z^{t,x})$ to the Markovian BSDE (\ref{bsde2-markovian-wave-notilde-mild}).
  Moreover we can show regular dependence on the initial datum $x$ of the solution to the forward equation in (\ref{fbsde2-wave-notilde}). 
\begin{proposition}\label{Teo:ex-regdep-BSDE}
Assume Hypothesis \ref{base} and let  $B$ be as in \eqref{bbb}. Let $t \in [0,T]$ and $x \in H.$ 
Consider the ${K}$-valued BSDE
(\ref{bsde2-markovian-wave-notilde-mild}). 
  Then
  there exists a unique solution 
$(Y^{t,x},Z^{t,x})\in L^2_\calp(\Omega, C([0,T],{K})) \times L^2_\calp(\Omega\times[0,T],L_2(U,{K}))$.
 Moreover,  
 the following estimates hold true: 
 \begin{equation}\label{est-YZ}  
  \E\big [ \sup_{\tau\in[0,T]}\vert Y^{t,x}_\tau\vert_{{K}}^2 \big ]+\E\int_0^T\Vert Z_\tau^{t,x}\Vert^2_{L_2(U,{K})} d \tau 
  \leq C_T \| B\|_{\infty}.
   \end{equation}
In addition, the map:  $(t,x)\mapsto Y_t^{t,x}  $,    $[0,T]\times  H \to {K}$, is deterministic.
If we further assume that
\begin{equation}
\label{diff} 
\text{the map:} \;\;  x\mapsto B(\tau,x),\quad H\rightarrow U, \;\; \text{is G\^ateaux differentiable on $H$,  for  all $\tau\in[0,T]$,}
\end{equation}
then, for any $t \in [0,T],$
the map 
\begin{equation}\label{map-x-toYZ}
 x\mapsto (Y^{t,x},Z^{t,x}),\quad H\rightarrow  L^2_\calp(\Omega, C([0,T],{K}))\times 
 L^2_\calp(\Omega\times[0,T],L_2(U,{K}))
\end{equation} 
is G\^ateaux differentiable on $H$. 
Moreover, assuming \eqref{diff}, the map:  $(t,x)\mapsto Y_t^{t,x}$  belongs to 
$\calg^{0,1}([0,T]\times H,{K})$.
 \end{proposition}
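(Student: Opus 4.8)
The plan is to read \eqref{bsde2-markovian-wave-notilde-mild} as a Lipschitz BSDE in mild form and to follow the scheme of \cite{HuPeng} and \cite{fute}, adapting it to the case in which the backward unknown $Y^{t,x}$ takes values in $H$. For existence, uniqueness and \eqref{est-YZ} I would observe that the driver, written as $\psi(\tau,z)=GB(\tau,\Xi_\tau^{t,x})+z\,B(\tau,\Xi_\tau^{t,x})$ with $z\in L_2(U,H)$, is affine in $z$ with $|z\,B(\tau,\cdot)|_H\le \|z\|_{L_2(U,H)}\|B\|_\infty$, hence globally Lipschitz in $z$ and independent of $y$, while the terminal datum vanishes and the free term $GB(\tau,\Xi_\tau^{t,x})$ is bounded. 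Since $A$ generates a $C_0$-group, $e^{-(s-\tau)A}$ is a bounded operator and the mild formulation is meaningful; a standard contraction argument in $L^2_\calp(\Omega,C([0,T],H))\times L^2_\calp(\Omega\times[0,T],L_2(U,H))$ with an exponentially weighted norm then yields the unique solution, and \eqref{est-YZ} follows by applying It\^o's formula to $|Y_\tau^{t,x}|^2$ (on a Yosida/Galerkin approximation that regularizes the unbounded term $-AY$), using the boundedness of $B$ and Gronwall's lemma.

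Next, the deterministic character of $(t,x)\mapsto Y_t^{t,x}$: on $[t,T]$ the whole coupled system is driven only by the increments $(W_s-W_t)_{s\ge t}$, which are independent of $\mathcal{F}_t$, whereas $Y_t^{t,x}$ is itself $\mathcal{F}_t$-measurable; hence $Y_t^{t,x}$ is $\P$-a.s.\ constant, so deterministic (as in \cite{fute}). I set $v(t,x)=Y_t^{t,x}$.

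For the differentiability under \eqref{diff} I would exploit that the forward equation is linear, so $\Xi_\tau^{t,x}=e^{(\tau-t)A}x+\int_t^\tau e^{(\tau-s)A}G\,dW_s$ is affine in $x$ with $\nabla_h\Xi_\tau^{t,x}=e^{(\tau-t)A}h$. Differentiating \eqref{bsde2-markovian-wave-notilde-mild} formally in the direction $h$, the pair $(\nabla_hY^{t,x},\nabla_hZ^{t,x})$ should solve the linearized BSDE
\begin{align*}
\nabla_h Y_\tau^{t,x}=&\int_\tau^T e^{-(s-\tau)A}\big[\,G\,\nabla B(s,\Xi_s^{t,x})e^{(s-t)A}h+Z_s^{t,x}\,\nabla B(s,\Xi_s^{t,x})e^{(s-t)A}h\\
&\qquad+(\nabla_hZ_s^{t,x})\,B(s,\Xi_s^{t,x})\,\big]\,ds-\int_\tau^T e^{-(s-\tau)A}\,\nabla_hZ_s^{t,x}\,dW_s,
\end{align*}
which is again a Lipschitz BSDE with $L_2(U,H)$-valued $Z$, hence uniquely solvable by the first step. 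The rigorous argument, following \cite{fute} but now with $H$-valued backward component, proceeds by showing that the difference quotients $\tfrac{1}{\epsilon}\big[(Y^{t,x+\epsilon h},Z^{t,x+\epsilon h})-(Y^{t,x},Z^{t,x})\big]$ are bounded uniformly in $\epsilon$ in the product space and converge there to the solution of the variational equation; this rests on the stability estimates for Lipschitz BSDEs together with the boundedness and continuity of $\nabla B$. The identification $\nabla_x v(t,x)h=\nabla_h Y_t^{t,x}$ (deterministic, by the argument of the second step applied to the linearized system) then gives G\^ateaux differentiability of $v$ in $x$, and the membership $v\in\calg^{0,1}([0,T]\times H,H)$ requires in addition the joint continuity of $v$ and the strong continuity of $(t,x)\mapsto\nabla_x v(t,x)$, both obtained from the continuous dependence of the forward process and of the (variational) BSDE solution on the parameters $(t,x)$.

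The main obstacle will be this last step: carrying out the Fuhrman--Tessitore differentiation scheme in the Hilbert-space-valued setting, in particular obtaining the uniform-in-$\epsilon$ bounds on the difference quotients in the $L^2_\calp$ product norm and passing to the limit to identify the variational BSDE, while keeping control of the $L_2(U,H)$-valued process $Z$ and of the coupling term $Z\,B$. The real-valued case is treated in \cite{fute}; the extension to $Y$ with values in $H$ is precisely where the additional work lies.
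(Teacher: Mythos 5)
Your proposal is correct and follows essentially the same route as the paper: the paper obtains existence, uniqueness and \eqref{est-YZ} by citing Lemma 2.1 and Proposition 2.1 of \cite{HuPeng} and Proposition 3.5 of \cite{Gua1} (i.e.\ exactly the Lipschitz-BSDE contraction and a priori estimate you sketch), proves the deterministic character by the same $\mathcal{F}_{t,T}^W$ versus $\mathcal{F}_t^W$ measurability argument, and gets the differentiability statements by invoking Propositions 4.8 and 5.2 of \cite{fute} adapted to the $H$-valued case, which is the difference-quotient/linearized-BSDE scheme you describe.
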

\begin{proof} Existence and uniqueness of a solution come directly from Lemma 2.1 and Proposition 2.1
in \cite{HuPeng}, that we can apply since
$B$ is bounded. Estimate (\ref{est-YZ}) follows from {\cite{GuaTess}, Remark 4.5, estimate (4.19).}
 Since the process $\Xi^{t,x}$ is ${\cal F}_{t,T}^W$-measurable (where ${\cal F}_{t,T}^W$ is the $\sigma$-algebra
generated by $W_r - W_t$, $r \in [t,T]$, augmented with the $\P$-null sets), it turns out that 
   $Y^{t,x}_t$
is measurable both with respect to ${\cal F}_{t, T }^W$ and ${\cal F}_t^W$; it follows that $Y^{t,x}_t$ is indeed
deterministic.

When $B$ is also differentiable with respect to $x$ (see \eqref{diff}) then the differentiability properties 
follow by \cite{fute}, Propositions 4.8 and  5.2, which can be applied in the same way also when the BSDE is ${K}$-valued. 
 \end{proof}  

Let $(Y^{t,x},Z^{t,x})$ be the solution of (\ref{fbsde2-wave-notilde}) assuming only Hypothesis \ref{base} and   \eqref{bbb}. 
By the previous result we can define the deterministic function $v : [0,T] \times H \to K$,  
\begin{equation}\label{v}
 v(t,x)= Y_t^{t,x} \in K, \quad (t,x)\in[0,T]\times H.
 \end{equation}    
 Assuming also the differentiability condition \eqref{diff},  the map defined in (\ref{map-x-toYZ}) is in particular continuous and
it is standard 
to check the following useful identities:  for any  $\, 0\leq t\leq s\leq\tau \leq T$,
\begin{equation}\label{Markov-prop1}
 Y_\tau^{t,x}= Y_\tau^{s,\Xi_s^{t,x}},\quad Z_\tau^{t,x}= Z_\tau^{s,\Xi_s^{t,x}},\; \P-\text{a.s.}.
\end{equation} 
The proof of  (\ref{Markov-prop1})
can be performed as for the real valued BSDEs (see \cite{fute}, formula (5.3)), and it is related to the
fact that the value of the processes $Y^{t,x}$ and $Z^{t,x}$ on the time interval $[s, T]$ is uniquely determined by the values
of $\Xi^{t,x}$ on the same interval.

 Moreover,
if  we assume  differentiability of $B$ (see \eqref{diff}),
 we get
  in particular that, 
 for $t\in [0,T]$, $v(t,\cdot):H\rightarrow K$ is G\^ateaux  
  differentiable on $H$, and, moreover, 
 applying  \eqref{Markov-prop1}, we have:
\begin{equation}\label{v-Markov}
v(\tau,\Xi_\tau^{t,x})=Y_\tau^{t,x},\, \;\;\; \tau \in [t,T], \; \P-a.s..
\end{equation}
Now   we want to prove that the  derivative $\nabla^G v(\tau,\Xi_\tau^{t,x})$ can be identified with $ Z_\tau ^{t,x}$ (see \eqref{dg}).
At first we prove such identification assuming that $B$ is also differentiable (see \eqref{diff}). Then in Theorem \ref{teo-identific-Z} we show that such identification
 holds true only assuming  
 \eqref{bbb}.

\begin{remark} {\em 
The  next identification property   with  $B$ differentiable (see \eqref{diff}) is here presented for the markovian BSDE (\ref{bsde2-markovian-wave-notilde-mild}) related to the Ornstein Uhlenbeck wave process; it remains true  for  general linear Markovian BSDEs with differentiable coefficients and final datum which is related to a forward stochastic equation with additive noise,  $A$ generator of a strongly continuous semigroup, $A$ instead of $-A$ in the backward equation, and Lipschitz continuous and G\^ateaux  differentiable drift. } 
 \end{remark}
  
\begin{lemma} 
\label{prop-identific-Z-diffle}Let $v$ be defined in (\ref{v}) and assume that all the hypotheses of Proposition
\ref{Teo:ex-regdep-BSDE}, including the differentiability of $B$ (see \eqref{diff}) hold true. Let $(Y^{t,x},Z^{t,x})$  be the solution of
(\ref{bsde2-markovian-wave-notilde-mild}).
Then, for any $\tau \in [0,T],$ a.e., we have, $\P$-a.s.,  
\begin{equation}\label{identific-Z}
\nabla^G v(\tau,\Xi_\tau^{t,x}) = Z_\tau ^{t,x}   \,\, \text{in} \; L_2(U,K) .
\end{equation}
\end{lemma}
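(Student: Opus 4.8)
The plan is to follow the real-valued identification of Fuhrman and Tessitore \cite{fute}, adapting it to the present $H$-valued (and $L_2(U,H)$-valued) setting. First I would record the two differentiability facts supplied by Proposition \ref{Teo:ex-regdep-BSDE} under \eqref{diff}: the map $x \mapsto (Y^{t,x},Z^{t,x})$ is G\^ateaux differentiable, and $v(t,\cdot)$ is G\^ateaux differentiable with $v(\tau,\Xi_\tau^{t,x}) = Y_\tau^{t,x}$ for $\tau \in [t,T]$ (identity \eqref{v-Markov}). Since the forward noise is additive, the derivative of the Ornstein--Uhlenbeck process in the initial direction $h$ is the \emph{deterministic} operator $\nabla_h \Xi_\tau^{t,x} = e^{(\tau-t)A}h$. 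Differentiating \eqref{v-Markov} in $x$ along $h$ then yields the chain-rule identity $\nabla_h Y_\tau^{t,x} = \nabla v(\tau,\Xi_\tau^{t,x})\, e^{(\tau-t)A} h$, and because $A$ generates a group, $e^{(\tau-t)A}$ is boundedly invertible, so $\nabla v(\tau,\Xi_\tau^{t,x})$ is completely determined by the initial-condition derivative of $Y_\tau^{t,x}$.

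The identity \eqref{identific-Z} is, at the heuristic level, a matching of martingale parts: in strong form the backward equation reads $dY_\tau = (AY_\tau - GB - Z_\tau B)\,d\tau + Z_\tau\,dW_\tau$, so the martingale part of $Y_\tau$ is $\int Z_\tau\,dW_\tau$; on the other hand $Y_\tau = v(\tau,\Xi_\tau^{t,x})$ and, since $d\Xi_\tau = A\Xi_\tau\,d\tau + G\,dW_\tau$, a formal It\^o expansion of $v(\tau,\Xi_\tau)$ would produce the martingale part $\int \nabla v(\tau,\Xi_\tau) G\,dW_\tau = \int \nabla^G v(\tau,\Xi_\tau)\,dW_\tau$. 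Equating the two integrands gives precisely $Z_\tau = \nabla^G v(\tau,\Xi_\tau^{t,x})$ in $L_2(U,H)$.

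To make this rigorous I would reduce to the scalar case treated in \cite{fute}. For fixed $h \in H$ set $Y_\tau^h := \langle Y_\tau^{t,x}, h\rangle_H$; pairing the mild BSDE \eqref{bsde2-markovian-wave-notilde-mild} with $h$ shows that $(Y^h, \zeta^h)$, with $\zeta^h_\tau := (Z_\tau^{t,x})^* h \in U$, solves a \emph{scalar} Markovian BSDE of the type covered by \cite{fute}: the driver terms $\langle GB, h\rangle_H$ and $\langle Z_\tau B, h\rangle_H = \langle \zeta^h_\tau, B\rangle_U$ are admissible, the generator $-A$ is the one of the semigroup, and the coupling to $\Xi^{t,x}$ is preserved. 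Applying the scalar identification of \cite{fute} to $v^h(\tau,x) := \langle v(\tau,x), h\rangle_H$ gives $\zeta^h_\tau = \nabla^G v^h(\tau,\Xi_\tau^{t,x})$, that is $\langle Z_\tau a, h\rangle_H = \langle \nabla^G_a v(\tau,\Xi_\tau^{t,x}), h\rangle_H$ for all $a \in U$; letting $h$ range over $H$ yields \eqref{identific-Z}, while the Hilbert--Schmidt membership is already guaranteed by Proposition \ref{Teo:ex-regdep-BSDE}.

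The main obstacle is exactly that the formal It\^o computation of the second paragraph is not licensed: under \eqref{diff} the function $v$ is only G\^ateaux $C^1$ in $x$ and has neither a second space-derivative nor a time-derivative, so $v(\tau,\Xi_\tau)$ cannot be differentiated directly. The substance of the argument is therefore the variational one imported from \cite{fute}, which replaces It\^o's formula by the differentiated (linearized) BSDE together with uniqueness of the martingale representation; the chain-rule identity $\nabla_h Y_\tau^{t,x} = \nabla v(\tau,\Xi_\tau^{t,x})e^{(\tau-t)A}h$ and the invertibility of the group are what convert the initial-condition derivative appearing there into the noise-direction derivative $\nabla^G v$. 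The remaining work is bookkeeping: verifying that the scalar reduction genuinely meets the hypotheses of \cite{fute} in mild form, and that all identities, being asserted for a.e.\ $\tau$ and $\P$-a.s., are compatible with the null sets introduced by the differentiation procedure.
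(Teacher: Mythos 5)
Your overall strategy---reduce to the scalar identification of \cite{fute} by pairing with a fixed $h\in H$ and then let $h$ range over $H$---is indeed the one the paper follows, but as written the reduction has a genuine gap: the pairing $\langle Y^{t,x}_\tau,h\rangle_H$ does \emph{not} solve a scalar Markovian BSDE of the type covered by \cite{fute}. The backward equation here carries the unbounded drift $-AY_\tau\,d\tau$, i.e.\ in mild form every integrand in \eqref{bsde2-markovian-wave-notilde-mild} is hit by $e^{-(s-\tau)A}$, with the running time $\tau$ inside the operator. Pairing with $h$ gives
\begin{equation*}
\langle Y^{t,x}_\tau,h\rangle=\int_\tau^T\langle\,\cdots,e^{-(s-\tau)A^*}h\rangle\,ds-\int_\tau^T\langle dW_s,(Z^{t,x}_s)^*e^{-(s-\tau)A^*}h\rangle_U ,
\end{equation*}
whose ``martingale part'' has a $\tau$-dependent integrand $(Z^{t,x}_s)^*e^{-(s-\tau)A^*}h$ rather than your proposed $\zeta^h_s=(Z^{t,x}_s)^*h$; this is not a semimartingale decomposition, so neither the scalar identification theorem nor a martingale-representation-uniqueness argument applies to it. The missing idea is the conjugation by the group: set $\widetilde Y^{t,x}_\tau=e^{-\tau A}Y^{t,x}_\tau$ and $\widetilde v(t,x)=e^{-tA}v(t,x)$. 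Because $A$ generates a group, this removes the $\tau$-dependence from the integrands and makes $\langle\widetilde Y^{t,x}_\tau,h\rangle$ an honest real semimartingale with martingale part $\int_0^\tau\langle dW_s,(Z^{t,x}_s)^*e^{-sA^*}h\rangle_U$. This is where the group property is actually used---not, as in your first paragraph, to invert $e^{(\tau-t)A}$ in a chain rule for $\nabla_h Y$.

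Once this is in place, the paper's conclusion is reached by computing the joint quadratic variation of $\langle\widetilde Y^{t,x},h\rangle$ with the real Wiener process $W^\xi_\tau=\langle\xi,W_\tau\rangle_U$ in two ways: directly from the semimartingale decomposition, giving $\int_0^\tau\langle e^{-sA}Z^{t,x}_s\xi,h\rangle\,ds$, and via Lemma 6.3 of \cite{fute} applied to the scalar function $\widetilde v^h(\tau,x)=\langle\widetilde v(\tau,x),h\rangle$ (which lies in $\calg^{0,1}$ by Proposition \ref{Teo:ex-regdep-BSDE} and \eqref{v-Markov}), giving $\int_0^\tau\langle e^{-sA}\nabla^Gv(s,\Xi^{t,x}_s)\xi,h\rangle\,ds$. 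Equating and using separability of $H$ and $U$, and invertibility of $e^{-sA}$, yields \eqref{identific-Z}. Your second and fourth paragraphs gesture at this (formal It\^o matching of martingale parts, replaced by a rigorous device from \cite{fute}), but you attribute the rigorous device to ``the differentiated BSDE together with uniqueness of the martingale representation'', whereas the argument that actually closes the proof is the joint-quadratic-variation computation; and in any case neither device can be launched before the $e^{-\tau A}$ conjugation is performed. You correctly flag ``verifying that the scalar reduction meets the hypotheses of \cite{fute}'' as remaining work, but that verification fails without this extra step, so it is the substance of the proof rather than bookkeeping.
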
 
\dim The result can be seen as an extension of 
Theorem 6.1 in \cite{fute-infor} (see also Theorem 6.2 in \cite{fute})
to the case of a ${K}$-valued BSDE.
  Let  $\xi  \in  U$ and consider the real Wiener process 
$(W^\xi_\tau)_{\tau\ge 0}$, where
 \[
  W^\xi_\tau:= \langle \xi , W_{\tau} \rangle_U. 
\] 
Let $h \in {K}$. 
Using the group property if we set  $\widetilde  Y_{\tau}^{t,x} 
=  e^{- \tau {A}} Y_{\tau}^{t,x}$ we have, for   $\tau\in [0,T],$ $\P$-a.s.,
\begin{gather*}
\widetilde  Y_{\tau}^{t,x}=\int_\tau^Te^{-s{A}}G B(s,\Xi^{t,x}_s)\,ds+\int_\tau^Te^{-s{A}} 
 Z_s^{t,x}\, B(s,\Xi^{t,x}_s)\,ds -
 \int_\tau^Te^{-s {A}} Z^{t,x}_{s}\;d W_s
\\
= \widetilde  Y_{0}^{t,x} - \int_0^\tau e^{-s{A}}G B(s,\Xi^{t,x}_s)\,ds - \int_0^\tau e^{-s{A}} 
 Z_s^{t,x}\, B(s,\Xi^{t,x}_s)\,ds + 
 \int_0^\tau e^{-s {A}} Z^{t,x}_{s}\;d W_s
\end{gather*}    
and so  
\begin{gather*}
\langle \widetilde  Y_{\tau}^{t,x}, h \rangle =  
 \langle \widetilde  Y_{0}^{t,x} , h \rangle  - \int_0^\tau 
 \langle e^{-s{A}}G B(s,\Xi^{t,x}_s), h \rangle \,ds - \int_0^\tau \langle e^{-s{A}} 
 Z_s^{t,x}\, B(s,\Xi^{t,x}_s), h \rangle \,ds
\\ +
 \int_0^\tau \langle  d W_s , (Z^{t,x}_{s})^* e^{-s A^*  } h \rangle_U 
\end{gather*}  
($A^*$ denotes the adjoint of $A$, $A^* = -A$).
 We study  the joint quadratic variation between $\widetilde  Y_{}^{t,x}$
 and $ W^\xi$ (see, for instance, page 638 in \cite{fute-infor}). We find,
arguing as in the proof of Proposition 17 of \cite{DPFR}, 
\begin{equation} \label{joint}
\< \widetilde  Y_{}^{t,x}, W^{\xi}\>_{\tau} = \int_0^\tau \langle  \xi , (Z^{t,x}_{s})^* e^{-s A^* } h \rangle_U \, ds  = \int_0^\tau \langle e^{-s {A}} Z^{t,x}_{s} \xi ,   h \rangle_{ {K}} ds, \;\; \tau \in [0,T], \; \P\text{-a.s.}  
\end{equation}
Now we compute $\< \widetilde  Y_{}^{t,x}, W^{\xi}\>_{\tau}$
 in a different way, using \eqref{v-Markov0}.

Let us define $\widetilde v (t,x) = e^{-t {A}} v(t, x)$ so that we have $ \widetilde v(\tau,\Xi_\tau^{t,x})= \widetilde Y_\tau^{t,x}$. Moreover, we introduce the real function 
$$
\widetilde v^h(\tau, x) = \< \widetilde v(\tau, x), h \> = \langle v(\tau, x), e^{-\tau (A^*+ \lambda I)} h\rangle, \; \,\tau\in[0,T], \; x \in H.
$$
By \eqref{v} we know that  $\widetilde v^h \in   \calg^{0,1}([0,T]\times H,\R)$. Hence we can argue as in Lemma 6.3 of \cite{fute} (see also Lemma 6.4 in \cite{fute}) and obtain that the real 
process 
$$
(\widetilde v^h(\tau,\Xi_\tau^{t,x}))_{\tau\in[0,T]} = (\langle \widetilde  Y_{\tau}^{t,x}, h \rangle)_{\tau\in[0,T]} 
$$
admits joint quadratic variation with 
$W^{\xi}$ 
given by 
\[
 \<\widetilde v^h(\cdot, \Xi^{t,x}), W^{\xi}\>_{\tau}=
 \int_0^\tau \nabla^G \widetilde v^h(s,\Xi_s^{t,x}) \xi \,ds 
 = \int_0^\tau  \langle \nabla^G  v (s,\Xi_s^{t,x}) \xi, e^{-s{A}^*} h \rangle \,ds, \;\; \tau \in [0,T].
\]
Comparing this formula with \eqref{joint} we discover that
for $s \in [0,T]$, a.e., we have, $\P$-a.s.,
$$
\langle e^{-s {A}} Z^{t,x}_{s} \xi ,   h \rangle 
= \langle e^{- s {A}} \nabla^G  v(s,\Xi_s^{t,x}) \xi, h \rangle.
$$
Thanks to the separability of ${K}$ it follows that  $ e^{-s {A}} Z^{t,x}_{s} \xi 
=  e^{- s {A}} \nabla^G  v(s,\Xi_s^{t,x}) \xi $, for $s \in [0,T]$, a.e., 
$\P$-a.s.. The assertion follows easily.
\qed
    
We introduce now an approximation argument to smooth the coefficient $B$. We need such approximation  in the proof of next theorem.  
 Recall that for $s\in[0,T] $,
 $B(s,\cdot):H\rightarrow  U$, and
 \begin{equation}\label{B1}
GB(s,\cdot)= 
 \left(\begin{array}{l}
     0\\
 B(s,\cdot)
 \end{array}\right),
 \end{equation}
where  $B$ satisfies \eqref{bbb}.
 To perform the approximations of $B$ we follow
\cite{PZ}.  
For every  
$k\in\mathbb{N}$ we consider a nonnegative function $\rho_{k}\in C_{b}^{\infty}\left(  \mathbb{R}%
^{k}\right)  $ with compact support contained in the
ball of radius $\frac{1}{k}$ and such that $
{\displaystyle\int_{\mathbb{R}^{k}}}
\rho_{k}\left(  x\right)  dx=1$. Let $Q_{k}:H\longrightarrow\left\langle g_{1},...,g_{k}%
\right\rangle $ be the orthogonal projection on the linear space $\Lambda_k$ generated by
$g_{1},...,g_{k}$, where $(g_k)_{k\geq 1}$ is a  basis in $H$.
We identify $\Lambda_k $
with $\mathbb{R}^{k}$. For a bounded and continuous function $f:H\rightarrow U$ we set
\[
f^{k}\left(  x\right)  =\int_{\mathbb{R}^{k}}\rho_{k}\Big(  y-Q_{k}%
x\Big)  f\Big(  \sum_{i=1}^{k}y_{i}g_{i}\Big)  dy,
\]  
where for every $k\in\mathbb{N}$, $y_{k}=\left\langle y,g_{k}\right\rangle
_{H}$. It turns out that $f^{k}\in C_{b}^{\infty}\left( H,U\right)  $.

\noindent We will apply this approximation to $f=B(s,\cdot),\, s\in[0,T]$. By \eqref{bbb}
it follows that the sequence  $(B^k(t, \cdot))$ is
 equi-uniformly continuous on $H$, uniformly in $t \in [0,T]$, and $|B^k(t, x) - B(t,x)| \to 0$, as $k \to \infty$, for any $(t,x) \in [0,T] \times H$.
 Moreover, for $ s\in[0,T],$ $B^n(s, \cdot)$ is Fr\'echet differentiable on $H$
and    
\begin{equation}\label{stima-der-Bn}
\sup_{(s,x) \in [0,T]  \times H} \Vert \nabla B^n(s,x)\Vert_{L(H,U)} = c(n)
\end{equation}
where $c(n)\rightarrow \infty$ as $n\rightarrow \infty$.
 For any $n\geq 1$ let us consider the FBSDE (\ref{fbsde2-wave-notilde}) with $B^n$ in the place of $B$ 
  where again the precise meaning of the BSDE is
 given by its mild formulation: 
\begin{align} \label{bsde2-markovian-wave-notilde-mild-n}
 Y_{\tau}^{n,t,x}&=\int_\tau^Te^{-(s-\tau){A}}G B^n(s,\Xi^{t,x}_s)\,ds
+\int_\tau^Te^{-(s-\tau){A}} Z_s^{n,t,x}\, B^n(s,\Xi^{t,x}_s)\,ds\\&-
 \int_\tau^Te^{-(s-\tau){A}} Z^{n,t,x}_{s}\;d W_s.\nonumber
\end{align}
We know that the map in  (\ref{map-x-toYZ}) is G\^ateaux differentiable on $H$, since in  the BSDE (\ref{bsde2-markovian-wave-notilde-mild-n})  
 the coefficients are regular. Let $ \xi \in U$ and  consider the BSDE satisfied by the pair of processes
 $(\nabla_{G\xi}Y^{n,t,x},\nabla_{G\xi}Z^{n,t,x})$, which can be obtained by differentiating (\ref{bsde2-markovian-wave-notilde-mild-n})
 arguing as in \cite{fute}, Proposition 4.8, or following \cite{Gua1}, Proposition 4.4:
\begin{align} \label{bsde2-diff-markovian-wave-notilde-mild-n}
 \nabla_{G\xi}Y_{\tau}^{n,t,x}&=\int_\tau^Te^{-(s-\tau){A}}G \nabla_{} B^n(s,\Xi^{t,x}_s)e^{(s-t)A }G\xi\,ds
+\int_\tau^Te^{-(s-\tau){A}}\nabla_{G\xi} Z_s^{n,t,x}\, B^n(s,\Xi^{t,x}_s)\,ds\\&+
\int_\tau^Te^{-(s-\tau){A}} Z_s^{n,t,x}\,\nabla_{} B^n(s,\Xi^{t,x}_s)e^{(s-t)A }G\xi\,ds
 - \int_\tau^Te^{-(s-\tau){A}} \nabla_{G\xi}Z^{n,t,x}_{s}\;d W_s.\nonumber
\end{align}
 By applying estimate (\ref{est-YZ}) to (\ref{bsde2-diff-markovian-wave-notilde-mild-n}), and since $\Vert \nabla B^n(s,\cdot)\Vert_{L(H,U)}\leq
 c(n)$, where $c(n)$ does not depend on $s$ and $x$, we get
 \begin{equation}\label{est-diffYZ}
\E\big [ \sup_{\tau\in[0,T]}\vert  \nabla_{G\xi}Y^{n,t,x}_\tau\vert^2 \big ]+\E\int_0^T\Vert  \nabla_{G\xi} Z_\tau^{n,t,x}\Vert^2_{L_2(U,{K})} d \tau
  \leq c(n,T)^2 \,\vert \xi\vert_U^2,
 \end{equation}
where $c(n,T) >0$ is a constant that may blow up as $n\to\infty$, it depends on $T$ and $B$ but not on $x$ and $t$.
\newline Recalling  \eqref{v} we have 
\begin{equation*}\label{v_n}
 v^n(t,x)=Y^{n,t,x}_t, 
\end{equation*}
and we know  that $v^n\in\calg^{0,1}([0,T]\times H, {K})$;  by the previous computations we have, for any $n \ge 1$,
 \begin{equation}\label{est-nablaGvn}
\sup_{t \in [0,T], x\in H}\vert  \nabla_{\xi}^G v^n(t,x)\vert
  \leq c(n,T)\vert \xi\vert_U;
 \end{equation}
\newline Let $\tau=t$ and let us take the expectation in \eqref{bsde2-markovian-wave-notilde-mild-n}; we get 
\begin{equation} \label{bsde2-wave-semigroup}%
 Y^{n,t,x}_t=
\E\int_{t}^{T}%
e^{-(s-t){A}}G B^n(s,\Xi^{t,x}_s)\,ds
+\E\int_t^Te^{-(s-t){A}}  Z_s^{n,t,x} B^n(s,\Xi^{t,x}_s) \,ds, \;\;
t\in\left[  0,T\right]  ,
\;x\in H.
\end{equation}
Applying Lemma \ref{prop-identific-Z-diffle}, 
we can write (\ref{bsde2-wave-semigroup}) as
\begin{equation} \label{bsde2-wave-semigroup-bis}
 v^n(t,x)=
\E\int_{t}^{T}%
e^{-(s-t){A}}G B^n(s,\Xi^{t,x}_s)\,ds
+\E\int_t^Te^{-(s-t){A}}  \nabla^G v^n\left(s, \Xi_s^{t,x}\right) B^n(s,\Xi^{t,x}_s) \,ds,
\end{equation}
for $t\in\left[  0,T\right],
\;x\in H.$ Using  
 the ${K} $-valued OU transition semigroup  $(R_t)$  defined in
(\ref{wave-Hsemigroup}), we obtain
\begin{equation} \label{bsde2-wave-semigroup-vn}
v^n(t,x)
=\int_t^T
 R_{s-t}\left[ e^{-(s-t){A}}G B^n(s,\cdot)+ e^{-(s-t){A}} 
\nabla^Gv^n(s,\cdot)\, B^n(s,\cdot)
 \right] (  x) \, ds,
 \;t\in\left[  0,T\right]  ,
\;x\in H.
\end{equation}
Now   we want to show a convergence result of  
  $ Y^{n,t,x}_t$ to $ Y^{t,x}_t$ and  $ Z^{n,t,x}$ to $Z^{t,x}$.
The BSDE satisfied by $( Y^{n,t,x}- Y^{t,x},  Z^{n,t,x}- Z^{t,x})$
is 
\begin{equation*}
  \left\lbrace\begin{array}
 [c]{l}%
  -d\left(Y_{\tau}^{n,t,x}- Y_{\tau}^{t,x}\right)=-{A}\left(Y_{\tau}^{n,t,x}- Y_{\tau}^{t,x}\right)\,d\tau
  +\left( G B^n(\tau,\Xi^{t,x}_\tau)-G B(\tau,\Xi^{t,x}_\tau)\right) d\tau\\
   \quad+\left( Z^{n,t,x}_{\tau} \, B^n(\tau,\Xi^{t,x}_\tau)
    -  Z^{t,x}_{\tau} \, B(\tau,\Xi^{t,x}_\tau)\right) d\tau
    -\left( Z^{n,t,x}_{\tau}-Z^{t,x}_{\tau}\right)\;dW_\tau,
   \\\dis
  Y_{T}^{n,t,x}- Y_{T}^{t,x}=0.
 \end{array}
 \right.  
 \end{equation*} 
 By adding and subtracting $ Z^{t,x}_{\tau}B^n(\tau,\Xi^{t,x}_\tau)$
 this equation can be rewritten as 
 \begin{equation}
  \label{do1}
  \left\lbrace\begin{array}
 [c]{l}%
  -d\left( Y_{\tau}^{n,t,x}- Y_{\tau}^{t,x}\right)=-{A}\left(Y_{\tau}^{n,t,x}- Y_{\tau}^{t,x}\right)\,d\tau
  +\left( G B^n(\tau,\Xi^{t,x}_\tau)-G B(\tau,\Xi^{t,x}_\tau)\right) d\tau\\
   \quad -  Z^{t,x}_{\tau} \, \Big (B(\tau,\Xi^{t,x}_\tau)-B^n(\tau,\Xi^{t,x}_\tau) \Big)\,d\tau
    +\left( Z^{n,t,x}_{\tau} - Z^{t,x}_{\tau} \right)  B^n(\tau,\Xi^{t,x}_\tau)\, d\tau
  -\left( Z^{n,t,x}_{\tau}- Z^{t,x}_{\tau}\right)\;dW_\tau,
   \\\dis
   Y_{T}^{n,t,x}- Y_{T}^{t,x}=0.
 \end{array}
 \right.  
 \end{equation} 
Let $f_n(\tau) = \left( G B^n(\tau,\Xi^{t,x}_\tau)-G B(\tau,\Xi^{t,x}_\tau)\right) 
  -  Z^{t,x}_{\tau} \, \Big (B(\tau,\Xi^{t,x}_\tau)-B^n(\tau,\Xi^{t,x}_\tau) \Big)$. In the sequel we write 
    $ \Vert \cdot \Vert_{} $ instead of 
    $\Vert \cdot \Vert_{L_2(U, {K}))}$ to simplify notation. Let us fix $t \in [0,T]$.  By Remark 4.5, estimate (4.19) in {\cite{GuaTess},  we know that there exists $M= M(A)$ such that, for any $ \tau  \in [0,T]$, 
\begin{equation}
 \label{pe1}
\E  \vert  Y^{n,t,x}_{\tau}- Y^{t,x}_{\tau} \vert ^2
 +\E \int_{\tau}^T\Vert  Z^{n,t,x}_{s}- Z^{t,x}_{s} \Vert ^2\,ds
 \leq M (T-\tau)\E \int_\tau^T  |f_n(s)|^2 d s. 
\end{equation}     
 Recall  that the sequence $(B^n)$ is uniformly bounded on $[0,T] \times H$ by $\| B\|_{\infty}$.
 Let $\eta >0$ be small enough ($\eta M  \| B \|_{\infty} \le 1/2$). If $\tau \in [T - \eta, T]$ we get
\begin{gather*}
 \E \vert  Y^{n,t,x}_{\tau}- Y^{t,x}_{\tau} \vert ^2
 +  \frac{1}{2}\E \int_\tau^T\Vert  Z^{n,t,x}_{s}- Z^{t,x}_{s} \Vert ^2\,ds
 \leq M \eta \E \int_\tau^T  | G B^n(s, \Xi^{t,x}_s)-G B(s,\Xi^{t,x}_s)|^2 d s
\\
  + M \eta \, \E \int_\tau^T  \| Z^{t,x}_{s} \|^2 \, | B(s,\Xi^{t,x}_s)-B^n(s,\Xi^{t,x}_s) |_U^2 \, d s. 
\end{gather*}
Since  $Z^{t,x}\in L^2_\calp(\Omega \times [0,T],L_2(U, {K}))$,
by the pointwise convergence of $B^n(\tau,\cdot)$ to $B(\tau,\cdot)$ and  the dominated convergence theorem we get
\begin{equation}\label{convergence}
  \E \vert  Y^{n,t,x}_{\tau}- Y^{t,x}_{\tau} \vert ^2
 +\E \int_\tau^T\Vert  Z^{n,t,x}_{s}- Z^{t,x}_{s} \Vert ^2\,ds\rightarrow 0 \text{ as }n\rightarrow\infty,
\end{equation}
$\tau \in [T -\eta,T]$. Let now  $\tau \in  [(T - 2\eta)  \vee 0, T - \eta  ]$. We  consider \eqref{do1} on $[0, T - \eta]$ with the terminal condition  $Y_{T - \eta}^{n,t,x}- Y_{T - \eta}^{t,x}=0$. Arguing as before we obtain  \eqref{convergence} when   
$\tau  \in  [(T - 2\eta)  \vee 0, T - \eta  ]$.   Proceeding in this way we finally get \eqref{convergence} for any $\tau \in [0,T]$.

We mention two consequences of \eqref{convergence}. The first one is obtained with $\tau =t$ and gives, 
 setting  $v^n(t,x) = Y^{n,t,x}_{t}$,  
$v^n(t,x) \to v(t,x)$ pointwise as $n \to \infty$ on $[0,T] \times H$.
The second one is that, for $\tau \in [t,T]$, possibly passing to a subsequence,   we can pass to the limit, $\P$-a.s., in 
$
v^n(\tau,\Xi_\tau^{t,x})=Y_\tau^{n,t,x}
$
 and get 
 \begin{equation}\label{v-Markov0}
v(\tau,\Xi_\tau^{t,x})=Y_\tau^{t,x},\, \;\;\; \tau \in [t,T], \; \P-a.s..
\end{equation}
 


Now we are ready to prove the following result.

\begin{theorem}
 \label{teo-identific-Z}  Assume Hypothesis \ref{base} and  that  $B$ satisfy  \eqref{bbb}.
Let  $v$ be the function defined in (\ref{v}).

Then  $v \in B_b([0,T] \times H, H)$ and,  for any $t \in [0,T],$
$v (t, \cdot ) :  H \to {K}$ is $G$-differentiable on $H$ (see the definition after \eqref{dg}). 
   Moreover, for any $(t,x) \in [0,T] \times H$, the map: $\xi \mapsto $   
 $\nabla_{G \xi}v(t,x) = \nabla_{ \xi}^Gv(t,x)$  $ \in L(U, {K})$ and, for any $\xi \in U$,  $\nabla^G_{\xi} v \in B_b([0,T] \times H, {K})$ with  $\sup_{(t,x) \in [0,T] \times H} \| \nabla^Gv(t,x) \|_{L(U, {K})} < \infty$.
Finally, for any $ \tau \in [0,T] $, a.e., we have
\begin{equation}\label{identific-Z--nodiffle}
 \nabla^G v(\tau,\Xi_\tau^{t,x})=  Z_\tau ^{t,x}, \;\; \,\P \text{-a.s.}.
\end{equation}
\end{theorem}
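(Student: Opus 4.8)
The plan is to transfer to the limit the properties already proved for the regularized problems, the crucial point being to derive from the smoothing of the Ornstein--Uhlenbeck semigroup bounds on $\nabla^G v^n$ that are \emph{uniform in} $n$, even though the Lipschitz constants $c(n)$ in \eqref{stima-der-Bn} blow up. First I would record the facts available for the smooth coefficients $B^n$: since $B^n$ is differentiable, Lemma \ref{prop-identific-Z-diffle} gives the identification $\nabla^G v^n(\tau,\Xi_\tau^{t,x})=Z_\tau^{n,t,x}$ and the fixed-point representation \eqref{bsde2-wave-semigroup-vn}. Boundedness of $v$ follows from \eqref{est-YZ}, and from \eqref{convergence} one has $v^n\to v$ pointwise together with $Z^n\to Z$ in $L^2(\Omega\times[0,T];L_2(U,H))$.

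The key step is the uniform estimate on $\nabla^G v^n$. Applying $\nabla^G$ to \eqref{bsde2-wave-semigroup-vn} and using the smoothing estimate \eqref{hil} of Lemma \ref{hil1}, together with the unitarity of $e^{tA}$ and the uniform bound $|B^n|\le\|B\|_\infty$, I would obtain for $\phi_n(t)=\sup_{x\in H}\|\nabla^G v^n(t,x)\|_{L_2(U,H)}$ a weakly singular integral inequality
\[
\phi_n(t)\le \int_t^T \frac{c\,\|B\|_\infty}{(s-t)^{1/2}}\big(C+\phi_n(s)\big)\,ds .
\]
Because the kernel $(s-t)^{-1/2}$ is integrable, iterating gives $\sup_{n,t,x}\|\nabla^G v^n(t,x)\|_{L_2(U,H)}\le C_T$, a bound that does not depend on $c(n)$. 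This gain comes from the regularization by the semigroup rather than from smoothness of $B^n$, and it is exactly the mechanism that permits a merely continuous $B$.

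I would then pass to the limit. Rewriting the nonlinear term of \eqref{bsde2-wave-semigroup-bis} along the process as $\E\int_t^T e^{-(s-t)A}Z_s^{n,t,x}\,B^n(s,\Xi_s^{t,x})\,ds$ (using the per-$n$ identification), and exploiting $Z^n\to Z$ in $L^2$ together with the pointwise convergence $B^n\to B$, dominated convergence yields a limiting integral identity for $v$ in which the irregular derivative no longer appears inside a supremum. Feeding this back into the semigroup form $v(t,x)=\int_t^T R_{s-t}\big[\,e^{-(s-t)A}(GB+\psi)\,\big](x)\,ds$ with a bounded Borel $\psi$, Lemmas \ref{teo:derHsemigroup} and \ref{hil1} show that the right-hand side is automatically $G$-differentiable, so that $v(t,\cdot)$ admits the directional derivatives $\nabla_{G\xi}v(t,x)$, the map $\xi\mapsto\nabla^G_\xi v(t,x)$ belongs to $L(U,H)$ with $\sup_{t,x}\|\nabla^G v(t,x)\|_{L(U,H)}<\infty$, establishing the regularity claims; the identification \eqref{identific-Z--nodiffle} then follows by combining $\nabla^G v^n(\tau,\Xi_\tau^{t,x})=Z_\tau^{n,t,x}$ with $Z^n\to Z$ and the convergence of $\nabla^G v^n(\cdot,\Xi^{t,x})$ to $\nabla^G v(\cdot,\Xi^{t,x})$.

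I expect the main obstacle to be precisely this last convergence of the derivatives. Since $B^n\to B$ only pointwise and not uniformly on $H$, there is no uniform control of second derivatives and hence no Arzel\`a--Ascoli argument, so $\nabla^G v^n$ cannot be shown to converge in sup-norm; a naive Gronwall closure fails because the source terms carry $\sup_{x}|B^n-B|$, which need not tend to $0$. The way around is to avoid uniform convergence entirely: one argues through the Gaussian representation \eqref{nablaG-R}, where $B^n$ and $\nabla^G v^n$ are evaluated inside an integral against $\mathcal{N}(0,Q_{s-t})$, so that the uniform first-order bound together with dominated convergence (or a weak-compactness argument in $L_2(U,H)$, with the weak limit identified through the integral equation) suffices; and, at the level of the process, the required convergence is read off the already proven $L^2$-convergence of $Z^n$ to $Z$ rather than from any pointwise statement on $H$.
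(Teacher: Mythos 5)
Your overall architecture coincides with the paper's: approximate $B$ by the smooth $B^n$, use Lemma \ref{prop-identific-Z-diffle} for each $n$, derive from the semigroup smoothing a bound on $\nabla^G v^n$ that is uniform in $n$ (this is exactly the paper's estimate \eqref{bond}, obtained with the same weighted-supremum/singular-kernel device you describe), and then identify $\nabla^G v$ with the $L^2$-limit of $Z^{n}$. The boundedness of $v$ and the pointwise convergence $v^n\to v$ are also handled as in the paper.

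The genuine gap is in the decisive step, which you yourself flag as ``the main obstacle'' but do not resolve: proving that $\nabla^G v^n(t,x)$ actually converges in $L(U,H)$ for each fixed $(t,x)$. Your first suggestion --- dominated convergence inside the Gaussian representation \eqref{nablaG-R} --- is circular, because the integrand contains $\nabla^G v^n(s,z+e^{(s-t)A}x)$ itself, and its pointwise (in $z$) convergence is precisely what is to be established; a uniform bound alone does not let you pass to the limit. Your second suggestion (weak compactness in $L_2(U,H)$ with the limit identified through the integral equation) is not carried out, and identifying the limit there requires passing to the limit in the product $\nabla^G v^n\cdot B^n$ under the Gaussian integral, which again needs some strong convergence. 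The paper closes this loop with a Cauchy-criterion argument whose mechanism is absent from your proposal: after using the equi-boundedness \eqref{bond} to replace $\Vert \nabla^G v^n-\nabla^G v^{n+p}\Vert^2$ by its first power inside the Gaussian integral, a H\"older inequality in the time variable (with exponents $3/4$ and $1/4$, chosen to keep $(s-t)^{-1/2}$ integrable) reduces everything to
$\bigl(\E\int_t^T\Vert Z^{n,t,x}_s-Z^{n+p,t,x}_s\Vert^2\,ds\bigr)^{1/4}$,
via the key observation that the law of $\Xi^{t,x}_s$ is $\caln(e^{(s-t)A}x,Q_{s-t})$, so the Gaussian integral of $\Vert\nabla^G v^n-\nabla^G v^{n+p}\Vert^2$ \emph{is} $\E\Vert Z^{n}_s-Z^{n+p}_s\Vert^2$ by the per-$n$ identification; this tends to $0$ by \eqref{convergence}. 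Relatedly, your alternative of writing the limit equation in semigroup form with ``a bounded Borel $\psi$'' presupposes that $Z^{t,x}_s$ is a Borel function of $\Xi^{t,x}_s$, i.e.\ a Markovian representation of $Z$, which is part of the conclusion and only becomes available once the pointwise limit $L(t,x)=\lim_n\nabla^G v^n(t,x)$ has been constructed. Finally, even granting the existence of $L$, one still has to show $\nabla^G v=L$; the paper does this by passing to the limit in the integral equation for $v^n$ and in its $G$-differentiated version, a step your proposal only partially anticipates.
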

 \dim To prove the result we will  pass to the limit as $n\rightarrow\infty$ in \eqref{bsde2-wave-semigroup-vn}.
 
We first note that
 by estimate (\ref{convergence}) $v^n(t,x)\rightarrow v(t,x)$ pointwise and so $v \in B_b([0,T] \times H, {K})$. 
 
 Then we show that there exists $L:[0,T]\times H\rightarrow L(U, {K})$ which is a bounded mapping,  such that, 
  for any $\xi\in U$, $L(t,x)\xi$ is measurable in $(t,x)\in [0,T]\times H$, and  
    $
    \nabla^G v^n(t,x)\rightarrow L(t,x)
    $  in $L(U, {K})$
 pointwise  as $n \to \infty$ on $[0,T] \times H$. 
 
 In order to obtain the assertion  we start to  study
 the difference $ v^n(t,x)-v^{n+p}(t,x)$, $n,p\geq 1$. In the sequel we set 
 ${\cal N}(0, Q_t) = \mu_t$. We have 
\begin{gather*}
 v^n(t,x)-v^{n+p}(t,x) =\int_{t}^{T}\int_{H}e^{-(s-t){A}}\left[ \nabla^G v^n(s,z+e^{(s-t){A}}x) \, 
B^n(s,z+e^{(s-t){A}}x)\right.\\
\left.-
 \nabla^G v^{n+p}(s,z+e^{(s-t){A}}x)\, B^{n+p}(s,z+e^{(s-t){A}}x)\right]
\mu_{s-t}(dz)\,ds\\
+\int_t^T\int_{H}e^{-(s-t){A}}\left(GB^n(s,z+e^{(s-t){A}}x)-GB^{n+p}(s,z+e^{(s-t){A}}x)\right)
\mu_{s-t}(dz).
\end{gather*} 
Since $v^n$ and $v^{n+p}$ are G\^ateaux differentiable in the space variable, by the smoothing properties of
the transition semigroup $(R_{s})$, we can differentiate both sides and 
 obtain for all $\xi\in U$ (cf. \eqref{nabla-R}) 
\begin{align*}
& \nabla^G_{\xi} v^n(t,x)-\nabla^G_{\xi} v^{n+p}(t,x) 
=\int_t^T\int_{H}e^{-(s-t){A}}\left(GB^n(s,z+e^{(s-t){A}}x)-GB^{n+p}(s,z+e^{(s-t){A}}x)\right)\\
&\quad\left\langle Q_{s-t}^{-1/2}%
e^{(s-t){A}}G\xi,Q_{s-t}^{-1/2}z\right\rangle 
\mu_{s-t}(dz)\,ds
\\
&+\int_{t}^{T}\int_{H}e^{-(s-t){A}}\left[  \nabla^G v^n(s,z+e^{(s-t){A}}x) \,
B^n(s,z+e^{(s-t){A}}x)\right.\\
&\quad- \left.\nabla^G v^{n+p}(s,z+e^{(s-t){A}}x) \,  B^{n+p}(s,z+e^{(s-t){A}}x) 
\right]
\left\langle Q_{s-t}^{-1/2}%
e^{(s-t){A}}G\xi,Q_{s-t}^{-1/2}z\right\rangle 
\mu_{s-t}(dz)\,ds.
\end{align*}
Now in order to apply the Cauchy criterion we note that by  {(\ref{hi11}) }
\begin{align*}
& \sup_{p \ge 1} \sup_{|\xi|_U =1}\vert \nabla^G_\xi v^n(t,x)-\nabla^G_\xi v^{n+p}(t,x)\vert \\
&\leq C \,\int_t^T(s-t)^{-\frac{1}{2}}
\left( \int_H \sup_{p \ge 1} 
\vert GB^n(s,z+e^{(s-t){A}}x)-GB^{n+p}(s,z+e^{(s-t){A}}x)\vert^2\mu_{s-t}(dz)\right)^{\frac{1}{2}}\,ds
\\
&+C \, \int_t^T(s-t)^{-\frac{1}{2}}
\left(\int_H \sup_{p \ge 1} \vert  \nabla^G v^n(s,z+e^{(s-t){A}}x) \, 
B^n(s,z+e^{(s-t){A}}x) \right.\\
&\left.-
\nabla^G v^{n+p}(s,z+e^{(s-t){A}}x)\,  B^{n+p}(s,z+e^{(s-t){A}}x) \vert
^2\mu_{s-t}(dz)\right)^{\frac{1}{2}}\,ds=I_n+II_n
\end{align*}
(to simplify notation we drop the dependence of $I_n$ and $II_n$ from  $(t,x)$).
We can easily apply the dominated convergence theorem, 
and letting $n\rightarrow\infty $ we get $ I_n\rightarrow 0$.
%
Concerning   $II_n$, by adding and subtracting 
$ \nabla^G v^n(s,z+e^{(s-t){A}}x) B^{n+p}(s,z+e^{(s-t){A}}x)$ we get 
\begin{align*}
II_n &\leq C \, \int_{t}^{T} (s-t)^{-\frac{1}{2}}
\left(\int_H \sup_{p \ge 1} \vert \nabla^G v^n(s,z+e^{(s-t){A}}x) \big[ 
B^n(s,z+e^{(s-t){A}}x)\right. \\ 
&\left.\quad-B^{n+p}(s,z+e^{(s-t){A}}x) \big] \vert
^2
 \mu_{s-t}(dz)\right)^{\frac{1}{2}}\,ds\\& +C \, \sup_{p \ge 1} \int_{t}^{T}(s-t)^{-\frac{1}{2}}
 \left(\int_H \vert  \big (\nabla^G v^n(s,z+e^{(s-t){A}}x)-\nabla^G v^{n+p}(s,z+e^{(s-t){A}}x) \big)\right.\\
 &\left.\quad
 B^{n+p}(s,z+e^{(s-t){A}}x)\vert
 ^2\mu_{s-t}(dz)\right)^{\frac{1}{2}}\,ds=II_n^a +II_n^b.
\end{align*}
In order to estimate $II_n^a$ we  show  that  the sequence $\left( \nabla^G_{} v^n \right)$
is equi-bounded from $[0,T] \times H$ with values in $L(U,K)$ 
(cf. \eqref{est-nablaGvn}). 
 We find,  setting $\| \cdot \|_{L} = \| \cdot\|_{L(U,K)}$,    with $\beta >0$,
\begin{align*}
& e^{\beta t}\Vert  \nabla^G v^n(t,x) \Vert_{L}
\\
 &\leq C \, e^{\beta t} \int_t^T(s-t)^{-\frac{1}{2}}
 \left(\int_H \vert B^n(s,z+e^{(s-t){A}}x)\vert^{2}_U
 \mu_{s-t}(dz)\right)^{\frac{1}{2}}\,ds
\\
 &+ C e^{\beta t}  \, \int_{t}^{T}(s-t)^{-\frac{1}{2}}
 \left(\int_{H}\vert \nabla^G v^n(s,z+e^{(s-t){A}}x)
 B^n(s,z+e^{(s-t){A}}x) \vert^2 \mu_{s-t}(dz)\right)^{\frac{1}{2}}\,ds
\\
  &\leq \, C e^{\beta t} \|B \|_{\infty} \int_t^T(s-t)^{-\frac{1}{2}} ds
   + \, C \|B \|_{\infty} \int_{t}^{T} \frac{ e^{- \beta (s-t)}} {(s-t)^{-\frac{1}{2}} } ds \; 
 \cdot  \sup_{t \in [0,T],\, y\in H} e^{\beta t}\Vert \nabla^G v^n(t,y) \Vert_{L}.
\end{align*}
Since the sequence $(B^n)$ is uniformly bounded,  by taking the supremum over $(t,x)\in [0,T] \times H$ and $\beta $ large enough we get 
on the left hand side we get
\begin{align} \label{bond}
\sup_{n \ge 1}\sup_{t \in [0,T], y\in H}\Vert  \nabla^G_{} v^n(t,y)  \Vert_{L(U,{K})} < \infty.
\end{align}
Coming back to the estimate of $II_n^a$, we find 
\begin{align*}
II_n^a& \leq  C_0 \int_t^T(s-t)^{-\frac{1}{2}}
\left(\int_H  \sup_{p \ge 1} \vert B^n(s,z+e^{(s-t){A}}x)-B^{n+p}(s,z+e^{(s-t){A}}x)\vert_U
^2\mu_{s-t}(dz)\right)^{\frac{1}{2}}\,ds,
\end{align*}
where $C_0$ is independent of $t, x$ and $n$.
By the dominated convergence theorem, using  the  pointwise convergence 
of the approximating sequence $(B^n)$, we find 
that $II_n^a \rightarrow 0$ as $n\rightarrow \infty$. 

Concerning  $II_n^b$, since $(B_n)$ and $(\nabla^G v^n)$ are equi-bounded we have:
\begin{align*}
 &\vert B^{n+p}(s,z+e^{(s-t){A}}x) \vert_U^2\, \Vert \nabla^G v^n(s,z+e^{(s-t){A}}x)-\nabla^G v^{n+p}(s,z+e^{(s-t){A}}x)\Vert^2_{L(U, {K})}
\\&\leq C_1
 \Vert \nabla^G v^n(s,z+e^{(s-t){A}}x)-\nabla^G v^{n+p}(s,z+e^{(s-t){A}}x)\Vert_{L(U, {K})},
 \end{align*}
 where $C_1$ is independent of $t, x$ and $n$, 
 Next, by the H\"older inequality, 
\begin{align*}
II_n^b& \leq c' \sup_{p \ge 1} \int_{t}^{T}(s-t)^{-\frac{1}{2}}
\left(\int_H    \Vert\ \nabla^G v^n(s,z+e^{(s-t){A}}x)-\nabla^G v^{n+p}(s,z+e^{(s-t){A}}x)\Vert_{L} \,
\mu_{s-t}(dz)\right)^{\frac{1}{2}}\,ds \\ 
\nonumber
&\leq c' \sup_{p \ge 1}
\left( \int_{t}^{T}(s-t)^{-\frac{2}{3}}\,ds\right)^{\frac{3}{4}}
\\& \cdot \, \left( \int_{t}^{T}\left(\int_H    \Vert \nabla^G v^n(s,z+e^{(s-t){A}}x)-\nabla^G v^{n+p}(s,z+e^{(s-t){A}}x)\Vert_{L} 
 \; \mu_{s-t}(dz)\right)^{2}\,ds\right)^{\frac{1}{4}} \\ \nonumber
 &  \leq c' \sup_{p \ge 1}
 \left( \int_{t}^{T}\int_H   \Vert \nabla^G v^n(s,z+e^{(s-t){A}}x)-\nabla^G v^{n+p}(s,z+e^{(s-t){A}}x)\Vert^{2}_{L} \, 
 \mu_{s-t}(dz)\,ds\right)^{\frac{1}{4}} \\ \nonumber
 &  = c'\sup_{p \ge 1}\left(
 \E\int_{t}^{T}  \Vert Z^{n,t,x}_s-Z^{n+p,t,x}_s\Vert^{2} 
 \,ds\right)^{\frac{1}{4}}\to 0
\end{align*}
as $n\to\infty$, for $(t,x) \in [0,T] \times H$, having applied (\ref{identific-Z}) and estimate (\ref{convergence}).
 Putting together these estimates we find
\[
\sup_{p \ge 1} \Vert \nabla^G v^n(t,y)-\nabla^G v^{n+p}(t,y)\Vert_{L(U, {K})} \rightarrow 0  \text{ as } n\rightarrow \infty,\;\;\; (t,y) \in [0,T] \times H.
\]
So we have proved that $ \nabla^G v^n(t,x)$ converges as $n\rightarrow \infty$ in $L(U, {K})$.
We set
\[
 L(t,x)=\lim_{n\rightarrow\infty} \nabla^G v^n(t,x).
\]
Clearly, for any $\xi \in U$,  $\nabla^G_{\xi} v^n \in B_b([0,T] \times H,  {K})$ and so $(t,x) \mapsto L(t,x) \xi \in B_b([0,T] \times H, {K})$.

Recall that $v^n$ satisfies (\ref{bsde2-wave-semigroup-vn}),
so, by passing to the limit in (\ref{bsde2-wave-semigroup-vn}),  by  the dominated convergence theorem,
we get
\begin{equation}
v(t,x)=\int_{t}^{T}%
R_{s-t}\left[  e^{-(s-t){A}}GB(s,\cdot)  + e^{-(s-t){A}}L(s,\cdot)B\left(  s,\cdot\right) \right] (  x) ds.
 \label{bsde2-wave-semigroup-limit}
\end{equation}
By differentiating (\ref{bsde2-wave-semigroup-vn}) in the direction $G\xi$,
we get for all $\xi\in U$
\begin{align*}
 \nabla^G_{\xi} v^n(t,x) =\int_t^T\int_{H}e^{-(s-t){A}}GB^n(s,z+e^{(s-t){A}}x)\left\langle Q_{s-t}^{-1/2}%
e^{(s-t){A}}G\xi,Q_{s-t}^{-1/2}z\right\rangle 
\mu_{s-t}(dz)\\
+\int_{t}^{T}\int_{H}e^{-(s-t){A}}  \nabla^G v^n(s,z+e^{(s-t){A}}x)
B^n(s,z+e^{(s-t){A}}x)
\left\langle Q_{s-t}^{-1/2}%
e^{(s-t){A}}G\xi,Q_{s-t}^{-1/2}z\right\rangle 
\mu_{s-t}(dz)\,ds.
\end{align*}
By passing to the limit,
we get
\begin{align*}
 L(t,x)\xi=\int_t^T\int_{H}e^{-(s-t){A}}GB(s,z+e^{(s-t){A}}x)\left\langle Q_{s-t}^{-1/2}%
e^{(s-t){A}}G\xi,Q_{s-t}^{-1/2}z\right\rangle 
\mu_{s-t}(dz)\\
+\int_{t}^{T}\int_{H}e^{-(s-t){A}} L(s,z+e^{(s-t){A}}x)
B(s,z+e^{(s-t){A}}x) \left\langle Q_{s-t}^{-1/2} 
e^{(s-t){A}}G\xi,Q_{s-t}^{-1/2}z\right\rangle 
\mu_{s-t}(dz)\,ds.
\end{align*}
By considering (\ref{bsde2-wave-semigroup-limit}) and taking into account the smoothing properties 
of the semigroup $(R_{t})$  (recall that, for any $s \in [0,T]$,
 $L(s, \cdot) B(s, \cdot) \in B_b(H, {K})$, and  Lemma \ref{teo:derHsemigroup} and \eqref{bbb}),   
we can easily obtain the desired differentiability  of 
 $v(t, \cdot ) : H  \rightarrow {K}$ along the directions $G \xi$, $\xi \in U$. 
 
Taking the directional derivative in (\ref{bsde2-wave-semigroup-limit}) we get, for all $\xi\in U$,
\[
\nabla^G_{\xi} v(t,x) =\int_{t}^{T} 
 \nabla^G_{\xi}
R_{s-t} \Big [ e^{-(s-t){A}}GB(s,\cdot)+e^{-(s-t){A}}L(s,\cdot)B\left(  s,\cdot\right) \big]  (  x) ds,
\;\;\; (t,x) \in [0,T] \times H,
 \]
and we finally deduce that $\nabla^G v(t,x)=L(t,x)$, $(t,x) \in [0,T] \times H$.
By (\ref{convergence}) we know that   
\[
  Z^{n,t,x}\rightarrow  Z^{t,x} \text{ in }L^2(\Omega\times [0,T]; L_2(U, {K})).
\]
Since, for any $n \ge 1,$ $ \tau \in [0,T] $, a.e., we have
$$
 \nabla^G v^n(\tau,\Xi_\tau^{t,x})=  Z_\tau ^{n,t,x}, \;\; \,\P\text{-a.s.},
$$
we get easily that \eqref{identific-Z--nodiffle} holds. The proof is complete.
\qed

\subsection{Additional regularity for the function $v(t,x) = Y_t^{t,x}$}\label{sez-reg-y}

Here we prove additional regularity  properties for the function $v(t,x)$ defined in (\ref{v}).
By the representation formula  given in (\ref{bsde2-markovian-wave-notilde-mild}) using the OU semigroup $(R_t)$
we know that 
\begin{align}\label{bsde2unobis}
v(t,x)
=&\int_t^T R_{s-t}\left[e^{-(s-t){A}}G B(s,\cdot)\right](x)\,ds+
\int_t^T R_{s-t}\left[e^{-(s-t){A}} \nabla^Gv(s,\cdot) B(s,\cdot) \right](x)\,ds. 
\end{align}
Hence $v$ satisfies  an integral equation like \eqref{bsde2uno} which has been studied 
in Theorem \ref{forse111}.
\begin{lemma}\label{forse11}
 Let Hypotheses \ref{base} and 
\ref{ip-b1} hold true. Then the function $v$ defined in (\ref{v}) coincides with the function $u$,  unique solution 
of (\ref{bsde2uno}) given in Lemma  \ref{forse13} and Theorem \ref{forse111}. 
 
\end{lemma}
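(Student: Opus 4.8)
The plan is to read \eqref{bsde2unobis} as the statement that $v$ is a solution of the very integral equation \eqref{bsde2uno} studied in Theorem \ref{forse1}. Since that theorem provides a \emph{unique} solution $u$ inside the Banach space $E_0$, the whole lemma reduces to showing that $v\in E_0$: once this is known, the uniqueness part of Theorem \ref{forse1} forces $v=u$. Thus the task is not to solve an equation but to upgrade the regularity of $v$ coming from the forward-backward system to the full $E_0$-regularity, namely bounded Fr\'echet derivative $\nabla v\in L(H,H)$, bounded $\nabla^G v\in L_2(U,H)$, and the bounded second derivative $\nabla\nabla^G_\xi v$ required in \eqref{ci111}.

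From Theorem \ref{teo-identific-Z} we only know that $v$ is bounded and that $\nabla^G v$ is a bounded $L(U,H)$-valued map; this is not enough to differentiate \eqref{bsde2unobis} once more, because the derivative estimates that make the time integral converge, namely the bounds of Lemma \ref{der12}, require the integrand $e^{-(s-t){A}}\,\nabla^G v(s,\cdot)B(s,\cdot)$ to be $\alpha$-H\"older in $x$, while a priori $\nabla^G v$ is not even known to be continuous. I would therefore first prove that $\nabla^G v$ is $\alpha$-H\"older (indeed Lipschitz) in $x$, uniformly in $t$. To this end I would return to the smooth approximations $v^n$ used in the proof of Theorem \ref{teo-identific-Z}: for each fixed $n$ the coefficient $B^n$ is of class $C^\infty_b$, so differentiating the forward-backward system a second time (as in Proposition \ref{Teo:ex-regdep-BSDE} and \cite{fute}) shows that $\nabla^G v^n(s,\cdot)$ is Lipschitz with a finite, $n$-dependent constant. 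Differentiating the representation \eqref{bsde2-wave-semigroup-vn} and inserting the second estimate of Lemma \ref{der12} then bounds the H\"older seminorm of $\nabla^G v^n(t,\cdot)$ by $\int_t^T c\,(s-t)^{-(4-3\alpha)/2}\,ds\,\cdot\,\bigl(\sup_s\|B^n(s,\cdot)\|_{\alpha}\bigr)\bigl(1+\sup\|\nabla^G v^n\|+[\nabla^G v^n]_{C^\alpha}\bigr)$. This is exactly the contraction mechanism of Theorem \ref{forse1}: the exponent $(4-3\alpha)/2$ is strictly less than $1$ precisely because $\alpha>2/3$, so the kernel is integrable and, on a short interval $[S,T]$, the self-referential term $[\nabla^G v^n]_{C^\alpha}$ can be absorbed. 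Since the mollification does not increase the H\"older seminorm, one has $\sup_n\sup_s\|B^n(s,\cdot)\|_{\alpha}\le \sup_s\|B(s,\cdot)\|_{\alpha}<\infty$, so the absorption and the resulting bound are uniform in $n$; patching finitely many such intervals yields $\sup_n\sup_s[\nabla^G v^n(s,\cdot)]_{C^\alpha}<\infty$.

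With this uniform equi-H\"older bound in hand the rest is routine. Using $\nabla^G v^n\to\nabla^G v$ pointwise in $L(U,H)$, established in the proof of Theorem \ref{teo-identific-Z} together with \eqref{convergence} and \eqref{bond}, the limit $\nabla^G v$ inherits the same $\alpha$-H\"older bound, so that $e^{-(s-t){A}}[GB(s,\cdot)+\nabla^G v(s,\cdot)B(s,\cdot)]\in C^\alpha_b(H,H)$ uniformly in $s$. Feeding this genuinely H\"older integrand into \eqref{bsde2unobis} and applying Lemmas \ref{teo:derHsemigroup}, \ref{hil1}, \ref{der1} and \ref{der12}, which is the same computation that shows ${\cal T}$ maps $E_0$ into $E_0$ in Theorem \ref{forse1}, gives that $\nabla v$, $\nabla^G v$ and $\nabla\nabla^G_\xi v$ are bounded with values in $L(H,H)$, $L_2(U,H)$ and $L(H,H)$ respectively; hence $v\in E_0$. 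The uniqueness statement of Theorem \ref{forse1} then yields $v=u$. I expect the main obstacle to be precisely the uniform-in-$n$ equi-H\"older estimate for $\nabla^G v^n$: it is the one step where the fixed-point argument of Theorem \ref{forse1} must be re-run at the level of the approximations, and it is exactly there that the hypothesis $\alpha>2/3$ is used, to guarantee integrability of the singular time kernel.
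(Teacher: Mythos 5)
Your overall strategy --- prove that $v$ itself lies in $E_0$ and then invoke the uniqueness part of Theorem \ref{forse1} --- is not the route the paper takes, and it is considerably harder than necessary. The paper never shows $v\in E_0$. It subtracts \eqref{bsde2unobis} from \eqref{bsde2uno}, obtaining
\begin{equation*}
u(t,x)-v(t,x)=\int_t^T R_{s-t}\Big[e^{-(s-t)A}\big(\nabla^G u(s,\cdot)-\nabla^G v(s,\cdot)\big)B(s,\cdot)\Big](x)\,ds,
\end{equation*}
applies $\nabla^G$ to both sides, and estimates with Lemma \ref{hil1}: since $B$ is bounded and both $\nabla^G u$ and $\nabla^G v$ are bounded $L(U,H)$-valued maps (the latter is exactly what Theorem \ref{teo-identific-Z} already provides), the weighted quantity $\sup_{t,x}e^{\beta t}\Vert \nabla^G u(t,x)-\nabla^G v(t,x)\Vert$ is bounded by $C_{\beta,T}$ times itself, with $C_{\beta,T}\to 0$ as $\beta\to\infty$; hence $\nabla^G u\equiv\nabla^G v$, and the displayed identity then gives $u\equiv v$. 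No H\"older continuity of $\nabla^G v$, no second derivatives of $v$, and no uniform-in-$n$ estimates on the approximations are needed: only the $t^{-1/2}$ smoothing of Lemma \ref{hil1} applied to a bounded integrand.

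Beyond being a detour, your sketch has a concrete gap at the step that launches the absorption argument. To pass from an inequality of the form $X_n\le \varepsilon X_n + C$ to $X_n\le C/(1-\varepsilon)$ you must first know $X_n<\infty$ for each fixed $n$, i.e. an a priori (possibly $n$-dependent) bound on $\nabla\nabla^G_\xi v^n$. You justify this by ``differentiating the forward--backward system a second time,'' but Proposition \ref{Teo:ex-regdep-BSDE} (and the results of \cite{fute} it relies on) only give first-order G\^ateaux differentiability of $x\mapsto(Y^{t,x},Z^{t,x})$; second-order differentiability of the $H$-valued BSDE is nowhere established in the paper and is not automatic. The natural alternative --- apply Theorem \ref{forse1} with $B^n$ in place of $B$ and identify the resulting fixed point with $v^n$ --- is precisely the statement of the present lemma for smooth coefficients, so it cannot be assumed without an argument such as the subtraction/contraction one above. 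A secondary point you gloss over: Lemma \ref{der12} controls the \emph{Lipschitz} seminorm of $\nabla^G v^n(t,\cdot)$, whereas the self-referential term on the right-hand side is its $\alpha$-H\"older seminorm, so closing the loop requires the interpolation $[f]_\alpha\le(2\Vert f\Vert_\infty)^{1-\alpha}(\mathrm{Lip}\, f)^\alpha$ followed by Young's inequality. All of this could probably be repaired, but the short contraction on the difference makes it unnecessary.
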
  
\dim 
By Theorem \ref{teo-identific-Z} we know that 
that  $v(t,x)$  belongs to $B_b([0,T] \times H, {K})$ and, moreover, there exists 
$\nabla^G v : [0,T] \times H \to L(U, {K})$ which is bounded and such that, for any $\xi \in U$, $ \nabla^G_{\xi} v\in B_b([0,T] \times H, { K})$.

We consider the difference 
\begin{equation} \label{di33}
 u(t,x)-  v(t,x)= 
\int_t^T R_{s-t}\left[e^{-(s-t){A}} \big(    
 \nabla^Gu(s,\cdot) B(s,\cdot) -\nabla^G v(s,\cdot) B(s,\cdot) \big)\right](x)\,ds
\end{equation} 
 and take the $\nabla^G$-derivative:  
\begin{align*}
\Vert \nabla^G u(t,x)- \nabla^G v(t,x)\Vert_{L(U, {K})}=
\Big \Vert\int_t^T \nabla^GR_{s-t}\left[e^{-(s-t){A}} \big(
 \nabla^Gu(s,\cdot) B(s,\cdot) -\nabla^G v(s,\cdot) B(s,\cdot) \big)\right](x)\,ds \Big\Vert_{L},
\end{align*}
where $\| \cdot \|_{L} = \| \cdot \|_{L(U,   {K})} $.
 Since,  $\nabla^Gu(\cdot) B(\cdot)$ and $\nabla^G v(\cdot) B(\cdot)$ both belong to $B_b([0,T] \times H, {K})$ we can apply Lemma \ref{teo:derHsemigroup} and obtain, for  $\beta >0$, $t \in [0,T]$,
$$
\sup_{ x \in H} e^{\beta t}\Vert \nabla^G u(t,x)- \nabla^G v(t,x)\Vert_{L}
$$$$\le  C \| B\|_{\infty} \int_t^T \frac {e^{-\beta (s-t)}} {(s-t)^{\frac{1}{2}} } ds  \cdot \sup_{x\in H , s \in [0,T]} e^{\beta s}\Vert \nabla^Gu(s,x)-\nabla^Gv(s,x)\Vert_{L}
$$$$\le C_{\beta , T} \sup_{x\in H , s \in [0,T]} e^{\beta s}\Vert \nabla^Gu(s,x)-\nabla^Gv(s,x)\Vert_{L }, \;\;\; t \in [0,T],
$$
where $C_{\beta, T} \to 0 $ as $\beta \to \infty$. By choosing $\beta$ large enough, we get $\sup_{x\in H , s \in [0,T]} \Vert \nabla^Gu(s,x)-\nabla^Gv(s,x)\Vert_{L}$ $ =0$. So by \eqref{di33} we get that $u$ and $v$ coincide.
 \qed

\section{Strong uniqueness for the wave equation}\label{sez-uniq-wave}

In this section 
we show how to remove the ``bad'' term $B$ of equation (\ref{wa1}), i.e.,
\begin{equation}
\left\{
\begin{array}
[c]{l}%
dX_\tau^{x}  =AX_\tau ^{x} d\tau+GB(t,X_\tau ^{x} )d\tau+GdW_\tau  ,\text{ \ \ \ }\tau\in\left[
0,T\right], \\
X_0^{x}   =x,
\end{array}
\right.  \label{waveeqabstract-holder-bis}
\end{equation}
and get the  main pathwise uniqueness result.
Let $x \in H$. We consider a (weak) mild solution $(X_\tau^{t,x})$
$=(X_\tau^{t,x})_{\tau \in [0,T]}$ as in \eqref{mild}:
\begin{equation}X_\tau ^{t,x} =e^{(\tau-t)A}x+\int_t^\tau e^{(\tau-s)A}GB(s,X_s)ds+\int_t^\tau e^{(\tau-s)A}GdW_s  ,\text{  }\tau\in\left[
t,T\right], \;\;  X_\tau ^{t,x} =x, \; \;\tau \le t.
\label{waveeqabstract-holder-bis-mild}%
\end{equation}
This in particular is a continuous $H$-valued process defined and adapted on  a  stochastic basis  
$\left(
\Omega, {\mathcal F},
 ({ \mathcal F}_{t}), \P \right) $,  
  on which it is defined a
 cylindrical $U$-valued ${\mathcal F}_{t}$-Wiener process $W$.
Let us consider the FBSDE
\begin{equation}
 \left\lbrace\begin{array}
[c]{l}%
dX_{\tau}^{t,x}  =AX_{\tau}^{t,x} d\tau+GB(\tau,X_{\tau}^{t,x})d\tau+GdW_\tau
,\text{ \ \ \ }\tau\in\left[  t,T\right], \\ \dis
X_{\tau}^{t,x} =x, \;\;\; \tau \in [0,t],\\ \dis
 -d\widetilde Y_{\tau}^{t,x}=- A\widetilde Y_{\tau}^{t,x} d\tau+G B(\tau,X^{t,x}_\tau)\,d\tau 
-\widetilde Z^{t,x}_{\tau}\;dW_\tau,
 \qquad \tau\in [0,T],
  \\\dis
  \widetilde Y_{T}^{t,x}=0.
\end{array}
\right.  \label{fbsde-wave}%
\end{equation}
  The precise meaning of the BSDE in equation (\ref{fbsde-wave}) is
 given by its mild formulation 
 \begin{equation} \label{bsde-markovian-wave-mild}
 \widetilde Y_{\tau}^{t,x}=\int_\tau^Te^{-(s-\tau){A}}G B(s,X^{t,x}_s)\,ds
-\int_\tau^T e^{-(s-\tau){A}}\widetilde Z^{t,x}_{s} dW_s,
 \quad \tau\in [0,T].
\end{equation}
 Let us set
$$\widetilde W_{\tau}=W_\tau+\int_0^\tau B(s,X_s^{t,x})ds,\;\; \tau \in [0,T].
$$ By the Girsanov theorem, see e.g. \cite{DPsecond}
and \cite{Ondre04}, 
 there exists
a probability measure $\widetilde\P$
such that on $(\Omega, \calf,  ({\cal F}_t)_{t \in [0,T]}, \widetilde \P)$ the process $(\widetilde W_{\tau})$
is a cylindrical Wiener process  in $U$ up to time $T$. 
In the stochastic basis $(\Omega, \calf, ({\cal F}_t)_{t \in [0,T]}, \widetilde \P)$ the FBSDE (\ref{fbsde-wave}) can be rewritten as
\begin{equation}
\left\{
\begin{array}
[c]{l}%
dX_{\tau}^{t,x}  =AX_{\tau}^{t,x} d\tau+Gd\widetilde W_{\tau}
,\text{ \ \ \ }\tau\in\left[  t,T\right], \\ \dis
X_{\tau}^{t,x} =x,\;\; \tau \in [0,t],\\ \dis
 -d \widetilde Y_{\tau}^{t,x}=-{A}\widetilde Y^{t,x}_\tau\,d\tau+G B(\tau,X^{t,x}_{\tau})\;d\tau +
   \,\widetilde Z^{t,x}_{\tau} B(\tau,X^{t,x}_{\tau}) d \tau 
 -\widetilde Z^{t,x}_{\tau}\;d\widetilde W_{\tau},
 \qquad \tau\in [0,T],
  \\\dis
  \widetilde Y_{T}^{t,x}=0, 
\end{array}
\right.  \label{fbsde1-wave}
\end{equation}
 \begin{equation} \label{bsde1-markovian-wave-mild}
\widetilde Y_{\tau}^{t,x}=\int_\tau^Te^{-(s-\tau){A}}G B(s,X^{t,x}_s)\,ds+\int_\tau^Te^{-(s-\tau){A}} \widetilde Z_s^{t,x}\, B(s,X^{t,x}_s)\,ds-
 \int_\tau^Te^{-(s-\tau){A}}Z^{t,x}_{s}\;d\widetilde W_s,
\end{equation}
$\tau\in [0,T]$. By the strong uniquenes for equation \eqref{wave eq ab det},  $X^{t,x}_{}$ is an Ornstein-Uhlenbeck process starting from $x$ at  $t$ which  is    ${\cal F}_{t,T}^{\widetilde W}$-measurable (where ${\cal F}_{t,T}^{\widetilde W}$ is the completed $\sigma$-algebra
generated by $\widetilde W_r - \widetilde W_t$, $r \in [t,T]$).
The law of $(X^{t,x}, \widetilde Y^{t,x},\widetilde Z^{t,x})$
depends only on the coefficients of the FBSDE (\ref{fbsde1-wave}) and does not depend on the probability
space on which it is defined  the cylindrical Wiener process. 
Thus the law of $(X^{t,x}, \widetilde Y^{t,x},\widetilde Z^{t,x})$ coincides with
the one  of 
$(\Xi^{t,x}, Y^{t,x},  Z^{t,x})$ solution of the FBSDE (\ref{fbsde2-wave-notilde}).

Moreover  $Y_{t}^{t,x}$ and $\widetilde Y_{t}^{t,x}$ are both deterministic and so they define a unique function  $v(t,x)$ given in \eqref{v}.  Moreover, we have, for any $\tau \in [0,T]$, 
\begin{equation} \label{def-tildeYtildeZ}
\widetilde Y_\tau^{t,x}=v(\tau,X_{\tau}^{t,x}), \;\;\; \P-a.s.;
 \; \text{  for any $\tau \in [0,T]$ a.e.,} \;\; 
\widetilde Z_\tau^{t,x}=\nabla^G v(\tau,X_{\tau}^{t,x}), \;\; \P-a.s. 
\end{equation}
(cf. \eqref{v-Markov0} and \eqref{identific-Z--nodiffle}).
In order to prove strong existence of a mild solution to equation (\ref{waveeqabstract-holder-bis}),
we will rewrite in a different way (\ref{waveeqabstract-holder-bis-mild}),
removing the term $\dis\int_t^\tau e^{(t-s)A}GB(s,X_s)ds$
by means of the BSDE in (\ref{bsde1-markovian-wave-mild}); when
 $t=0$ we denote, for brevity, by $(\widetilde Y^x, \widetilde Z^x)$ the process $(\widetilde Y^{0,x}, \widetilde Z^{0,x})$. 
\begin{proposition}
 Let Hypotheses \ref{base} and \ref{ip-b1} hold true.
Then a (weak) mild solution $X^x = (X_{\tau}^x)$ of 
(\ref{waveeqabstract-holder-bis-mild})  
starting at $t=0$  satisfies, for any $\tau \in [0,T]$, $\P$-a.s., 
 \begin{align}\label{waveeq-nobad}
  X_\tau ^{x}&= {e^{\tau A}x+e^{\tau A}v(0,x)-v(\tau,X_\tau^{x})+\int_0^\tau e^{(\tau-s)A}\widetilde Z_s^x\;dW_s 
  +\int_0^\tau e^{(\tau-s)A}GdW_s}\\ \nonumber
  &= e^{\tau A}x+e^{\tau A}v(0,x)-v(\tau,X_\tau^{x})+\int_0^\tau e^{(\tau-s)A}\nabla^G v(s,X_s^x)\;dW_s 
  +\int_0^\tau e^{(\tau-s)A}GdW_s
 \end{align}
\end{proposition}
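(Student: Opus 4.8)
The plan is to remove the drift term $\int_0^\tau e^{(\tau-s)A}GB(s,X_s^x)\,ds$ from the forward mild equation \eqref{waveeqabstract-holder-bis-mild} (taken with $t=0$) by substituting into it the mild form of the backward equation. The essential structural fact is that $A$ generates a $C_0$-group, so that $e^{\tau A}\in L(H,H)$ is bounded and satisfies $e^{\tau A}e^{-sA}=e^{(\tau-s)A}$; this lets me shift the semigroup kernels of the BSDE to match those of the forward equation. I work throughout on the original stochastic basis under $\P$ with the original Wiener process $W$, where \eqref{waveeqabstract-holder-bis-mild} and the mild BSDE \eqref{bsde-markovian-wave-mild} both hold and where the identifications \eqref{def-tildeYtildeZ}, i.e. $\widetilde Y_\tau^{x}=v(\tau,X_\tau^{x})$ and $\widetilde Z_\tau^{x}=\nabla^G v(\tau,X_\tau^{x})$ ($\P$-a.s.), are available.

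First I would write \eqref{bsde-markovian-wave-mild} (with $t=0$) at time $0$ and at a generic time $\tau$, giving $\widetilde Y_0^{x}=\int_0^T e^{-sA}GB(s,X_s^x)\,ds-\int_0^T e^{-sA}\widetilde Z_s^x\,dW_s$ and $\widetilde Y_\tau^{x}=\int_\tau^T e^{(\tau-s)A}GB(s,X_s^x)\,ds-\int_\tau^T e^{(\tau-s)A}\widetilde Z_s^x\,dW_s$. Applying $e^{\tau A}$ to the first identity and pulling this bounded deterministic operator inside the Bochner and the It\^o integral, the group property converts the kernels into $e^{(\tau-s)A}$; subtracting the second identity then cancels the two tail integrals over $[\tau,T]$ and leaves
\[
\int_0^\tau e^{(\tau-s)A}GB(s,X_s^x)\,ds=e^{\tau A}\widetilde Y_0^{x}-\widetilde Y_\tau^{x}+\int_0^\tau e^{(\tau-s)A}\widetilde Z_s^x\,dW_s.
\]
Inserting this into \eqref{waveeqabstract-holder-bis-mild} (with $t=0$) and then replacing $\widetilde Y_0^{x}$, $\widetilde Y_\tau^{x}$ and $\widetilde Z_s^x$ by means of \eqref{def-tildeYtildeZ}, together with $v(0,x)=\widetilde Y_0^{x}$ (see \eqref{v}), produces precisely \eqref{waveeq-nobad}.

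The algebra is short; the care lies in two points, which together constitute the only real obstacle. The first is bookkeeping of the exponents: all the cancellations depend on the group law $e^{\tau A}e^{-sA}=e^{(\tau-s)A}$ and on the legitimacy of moving the fixed bounded operator $e^{\tau A}$ through the stochastic integral. The second is measure-theoretic consistency: I must use the mild BSDE \eqref{bsde-markovian-wave-mild} written under $\P$ for the original $W$ (so that it is compatible with \eqref{waveeqabstract-holder-bis-mild}), whereas the identification \eqref{def-tildeYtildeZ} was derived via the Girsanov change of measure and the equality in law of $(X^{x},\widetilde Y^{x},\widetilde Z^{x})$ with $(\Xi^{x},Y^{x},Z^{x})$. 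Since $\widetilde\P$ and $\P$ are equivalent, the $\P$-a.s. statements in \eqref{def-tildeYtildeZ} are available under $\P$, and the resulting identity \eqref{waveeq-nobad} holds $\P$-a.s. for every $\tau\in[0,T]$.
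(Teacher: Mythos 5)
Your proposal is correct and follows essentially the same route as the paper: both arguments evaluate the mild BSDE at times $0$ and $\tau$, use the group property of $e^{tA}$ to align the kernels (the paper applies $e^{-\tau A}$ first and $e^{\tau A}$ at the end, you apply $e^{\tau A}$ directly to the time-$0$ identity, which is an equivalent bookkeeping), cancel the tail integrals over $[\tau,T]$, invoke the identifications \eqref{def-tildeYtildeZ}, and substitute into the forward mild equation. Your explicit remark that the $\widetilde\P$-a.s.\ identifications transfer to $\P$ by equivalence of the measures is a point the paper leaves implicit.
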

\dim 
Let us fix $\tau \in [0,T]$.  Writing (\ref{bsde-markovian-wave-mild}) for $t=0$ and $\tau =0$ we find, $\P$-a.s., 
\begin{equation} \label{bsde-markovian-wave-mild-bis-0}
 v(0,x)=\widetilde Y_{0}^{x}=\int_0^Te^{-sA}G B(s,X^{x}_s)\,ds - \int_0^Te^{-sA}\widetilde Z^{x}_{s}\;dW_s
\end{equation}
$$
= \int_0^{\tau}e^{-sA}G B(s,X^{x}_s)\,ds - \int_0^{\tau}e^{-sA}\widetilde Z^{x}_{s}\;dW_s + \int_{\tau}^Te^{-sA}G B(s,X^{x}_s)\,ds - \int_{\tau}^Te^{-sA}\widetilde Z^{x}_{s}\;dW_s.
$$
In (\ref{bsde-markovian-wave-mild}) with $t=0$ we apply to both sides the bounded linear operator $e^{-\tau A}$, we get
\begin{equation} \label{bsde-markovian-wave-mild-ter}
 e^{-\tau A}\widetilde Y_{\tau}^{x}=\int_\tau^Te^{-sA}G B(s,X^{x}_s)\,ds-\int_\tau^Te^{-sA}\widetilde Z^{x}_{s}\;dW_s,
 \qquad \tau\in [0,T].s
\end{equation}
Using \eqref{def-tildeYtildeZ} we obtain, $\P$-a.s., 
\begin{align}\label{v0}
 v(0,x)&=e^{-\tau A} \widetilde Y_\tau^{x}+\int_0^\tau e^{-sA}G B(s,X^{x}_s)\,ds -\int_0^\tau e^{-sA}\widetilde Z ^{x}_{s}\;dW_s\\ 
 \nonumber
&=e^{-\tau A}v(\tau,X_\tau^{x})+\int_0^\tau e^{-sA}G B(s,X^{x}_s)\,ds-\int_0^\tau e^{-sA}\nabla^G v(s,X_s^x)\;dW_s. 
 \end{align}
In particular from (\ref{v0}) we get 
\begin{equation}\label{v0-I}
\int_0^\tau e^{-sA}G B(s,X^{x}_s)\,ds=v(0,x)-e^{-\tau A}v(\tau,X_\tau^{x})+\int_0^\tau e^{-sA}\nabla^G v(s,X_s^x)\;dW_s
 \end{equation}
and by applying the bounded linear operator $e^{\tau A}$ to both sides we deduce that, $\P$-a.s., 
\begin{equation}\label{v0-II}
\int_0^\tau e^{(\tau-s)A}G B(s,X^{x}_s)\,ds=e^{\tau A}v(0,x)-v(\tau,X_\tau^{x})+\int_0^\tau e^{(\tau-s)A}\nabla^G v(s,X_s^x)\;dW_s.
 \end{equation}
Since $
X_{\tau} ^{x} - e^{\tau A}x - \dis\int_0^\tau e^{(\tau-s) A}GdW_s = \dis\int_0^\tau e^{(\tau -s) A}GB(s,X_s)ds
$
we get (\ref{waveeq-nobad}).
\qed
\begin{remark}\label{remark-final1} {\em 
  Notice that formula (\ref{waveeq-nobad}) does not coincide with formula (7) in \cite{DPF}, which is obtained by the so-called It\^o-Tanaka trick.
In fact our function $v$ 
   (see \ref{v}) and the function
 $U$ used in the paper \cite{DPF} are different, and we can see this by comparing  
 (\ref{bsde2unobis})  in the present paper with the mild formula (16) in \cite{DPF}.  
Following the procedure in \cite{DPF}, one  should  consider $U:[0,T]\times H\to H$  represented by the real functions
$U_n:=\<U,e_n \>: [0,T]\times H \rightarrow \R$, where $(e_n)_{n\geq1}$
 is a  basis in $H$, and $U_n$ is the solution to the linear Kolmogorov equation
 \begin{equation}\label{KolmoformaleDPF}
  \left\{\begin{array}{l}\dis
\frac{\partial U_n(t,x)}{\partial t}+\call_t [U_n(t,\cdot)](x)= - G B_n(t,x)
,\,
x\in H, \;\; t \in [0,T],
\\
\dis U_n(T,x)=0.
\end{array}\right.
\end{equation}
where
$ \call_t[f](x)=\frac{1}{2} Tr (\;GG^*\; \nabla^2f(x))
+ \< Ax,\nabla f(x)\>+ \<GB(t,x), \nabla f(x)\>;$ one can  solve (\ref{KolmoformaleDPF}) with techniques similar to the ones used also in \cite{G}.
On the other hand, from \eqref{bsde2unobis}
we   formally see that  $v$
is a
$K$-valued solution of the following  equation which contains the   operator $A$:
 \begin{equation}\label{KolmoformaleHnostra}
  \left\{\begin{array}{l}\dis
\frac{\partial v(t,x)}{\partial t}+\call_t [v(t,\cdot)](x)=Av(t,x) - G B(t,x)
,\,
x\in H, \;\; t \in [0,T],
\\
\dis v(T,x)=0.
\end{array}\right.
\end{equation}
}
 \end{remark} 

\begin{theorem} \label{uni1}
Let Hypotheses  \ref{base} and \ref{ip-b1} hold true.
Then for equation (\ref{wa1}) pathwise uniqueness holds (starting from any initial condition $x \in H$).

  Moreover, there exists $c_T>0$ such that if  $X_\tau^{x_1}$  and $ X_\tau^{x_2}$ are two (weak) mild solutions starting from $x_1$ and $x_2 \in H$ at $t=0$ (defined on the same stochastic basis)  {such that $x_1-x_2\in K$ then $\P$-a.s. $X^{x_1}_{\tau}-X^{x_2}_{\tau}  \in K $ for any   $\tau \in [0,T]$ and}      
  \begin{equation}\label{lip-dependence}
  {  \sup_{\tau \in [0,T]} \E \vert X_\tau^{x_1}-X_\tau^{x_2}\vert^2_K \leq c_T \vert x_1-x_2\vert^2_{K}. } 
  \end{equation}
\end{theorem}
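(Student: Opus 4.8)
The plan is to subtract the two instances of the transformed identity \eqref{waveeq-nobad} and run a Gronwall argument, exploiting that the irregular drift $B$ no longer appears. Writing \eqref{waveeq-nobad} for $X^{x_1}$ and $X^{x_2}$ (on the same basis, driven by the same $W$) and setting $D_\tau=X_\tau^{x_1}-X_\tau^{x_2}$, the common Ornstein--Uhlenbeck convolution $\int_0^\tau e^{(\tau-s)A}G\,dW_s$ cancels and only the "good" terms survive:
\begin{align*}
D_\tau ={}& e^{\tau A}(x_1-x_2) + e^{\tau A}\big(v(0,x_1)-v(0,x_2)\big) - \big(v(\tau,X_\tau^{x_1})-v(\tau,X_\tau^{x_2})\big) \\
&{}+ \int_0^\tau e^{(\tau-s)A}\big(\nabla^G v(s,X_s^{x_1})-\nabla^G v(s,X_s^{x_2})\big)\,dW_s.
\end{align*}
Since $e^{tA}$ is unitary on $H$, the first term contributes exactly $|x_1-x_2|$, and by the It\^o isometry the stochastic integral contributes $\int_0^\tau \E\,\|\nabla^G v(s,X_s^{x_1})-\nabla^G v(s,X_s^{x_2})\|_{L_2(U,H)}^2\,ds$ (the unitary factor drops out of the Hilbert--Schmidt norm). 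The second-order bound of Theorem \ref{forse1} (combined with Lemma \ref{forse11}) says precisely that $x\mapsto \nabla^G v(s,x)$ is Lipschitz from $H$ into $L_2(U,H)$, uniformly in $s$, with some constant $c$; hence this term is $\le c^2\int_0^\tau \E|D_s|^2\,ds$, a harmless Gronwall term. The only dangerous contribution is $v(\tau,X_\tau^{x_1})-v(\tau,X_\tau^{x_2})$, bounded by $\|\nabla v(\tau,\cdot)\|\,|D_\tau|$, which can be absorbed on the left only when $\|\nabla v(\tau,\cdot)\|<1$.

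This smallness is exactly what \eqref{de22} supplies, but only on a final time window. I would fix $\delta>0$ so small that $h(\delta)\cdot(\sup_t\|B(t,\cdot)\|_\alpha)\le 1/4$; by \eqref{de22}, applied to the problem with an arbitrary terminal time $b\le T$ (whose transformed function I denote $v^b$), one has $\|\nabla v^b(\tau,\cdot)\|_{L(H,H)}\le 1/3$ for $\tau\in[b-\delta,b]$, with $\delta$ \emph{independent} of $b$ because $h$ depends only on $\alpha$ and $\|B\|_\alpha$ is uniformly bounded. On such a window $[a,b]$ with $b-a\le\delta$, repeating the computation above with $g(\tau)=\E|D_\tau|^2$ and using $\|\nabla v^b\|\le 1/3$ both for the boundary term and for the term at the left endpoint $a$, one gets $\tfrac{5}{9}g(\tau)\le C\,g(a)+C\int_a^\tau g(s)\,ds$; Gronwall then yields $\sup_{\tau\in[a,b]}g(\tau)\le C_\ast\,g(a)$ with $C_\ast$ depending only on $\delta$ and $c$.

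Finally I would patch. The identity \eqref{waveeq-nobad} holds, by the same derivation shifted in time, for the flow started at any $a$ with terminal time $b$ and transformed function $v^b$; moreover the restriction of a global mild solution to $[a,b]$ is itself a mild solution started at $(a,X_a)$, so on $[a,b]$ one may use $X^{x_i}=X^{a,X_a^{x_i}}$. Splitting $[0,T]$ into $N=\lceil T/\delta\rceil$ consecutive windows of length $\le\delta$ and chaining the window estimate from the terminal window backwards gives $\sup_{\tau\in[0,T]}\E|D_\tau|^2\le C_\ast^{\,N}\,|x_1-x_2|^2$, which is \eqref{lip-dependence} with $c_T=C_\ast^{N}$. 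Taking $x_1=x_2$ forces $\E|D_\tau|^2\equiv 0$, and path-continuity upgrades this to $X^{x_1}=X^{x_2}$ a.s., i.e. pathwise uniqueness. The main obstacle is organizing this patching rigorously: one must verify the flow form of \eqref{waveeq-nobad} on the sub-intervals, check that both $\delta$ and the $L_2$-Lipschitz constant $c$ of $\nabla^G v^b$ can be chosen uniformly in the horizon $b$, and ensure that the constant $C_\ast$ does not degenerate when the finitely many windows are concatenated.
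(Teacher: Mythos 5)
Your proposal follows essentially the same route as the paper: subtract the two copies of the transformed identity \eqref{waveeq-nobad}, use the It\^o isometry together with the uniform bound $\sup_{t,x}\sup_{|h|=1}\Vert \nabla_h\nabla^G_\cdot v\Vert_{L_2(U,H)}<\infty$ from Theorem \ref{forse1} (via Lemma \ref{forse11}) to control the stochastic integral, absorb the term $v(\tau,X^{1}_\tau)-v(\tau,X^{2}_\tau)$ using the smallness \eqref{de22}, apply Gronwall, and iterate over finitely many windows of a fixed length determined by $h$ and $\Vert B\Vert_\alpha$. The one place where you diverge is the patching mechanism, and it is exactly the step you flag as the main obstacle: you restart the flow at the left endpoint $a$ of each window, which requires the identity \eqref{waveeq-nobad} for a solution started at time $a$ from the \emph{random} initial state $X_a^{x_i}$ --- something the paper never proves (the identity is established only for deterministic initial data at $t=0$). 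The paper sidesteps this entirely: at the $k$-th step it keeps the identity anchored at time $0$ with the deterministic data $x_1,x_2$, but replaces the auxiliary BSDE's terminal horizon by $(k+1)T_0\wedge T$, so that the new function $v^{(k)}$ satisfies $\Vert\nabla v^{(k)}\Vert\le 1/3$ precisely on the current window $[kT_0,(k+1)T_0\wedge T]$; the stochastic integral $\int_0^\tau$ is then split as $\int_0^{kT_0}+\int_{kT_0}^\tau$ and the first piece is controlled by the estimate already obtained on the earlier windows. If you want to keep your restarting version you would need to justify \eqref{waveeq-nobad} for $\mathcal{F}_a$-measurable initial conditions (by conditioning or approximation); otherwise the paper's bookkeeping closes the argument with no extra work.
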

\begin{proof}   
We prove \eqref{lip-dependence} which implies the pathwise uniqueness starting from any $x \in H$. Indeed if $x_1 = x_2 =x$ then \eqref{lip-dependence}  implies that 
 $\P$-a.s., 
  $X_\tau^{x_1}=X_\tau^{x_2}$, $\tau \in [0,T]$.  
  
Let us fix $x_1, x_2 \in H$ with {$x_1-x_2 \in K$} and consider two (weak) mild solutions $X^1$ and $X^2$ defined on the same stochastic basis, with respect to the same cylindrical Wiener process and starting respectively from $x_1$ and $x_2$ at time $t=0.$
Notice that 
$$
X^1_{\tau} - X^2_{\tau}
= e^{\tau A}(x_1 - x_2) +\int_0^\tau e^{(\tau-s)A}\Big( GB(s,X^1_s)-GB(X^2_s)\Big)\,ds,
$$
and both $e^{\tau A}(x_1 - x_2) $ and the integral take their values in $K$. 
 Indeed $x_1 - x_2 \in K$,  $GB(s, \cdot ) \in K$ and $e^{rA} : K \to K$
 (cf. \eqref{evolve}).

Let  $T_0 \in (0,T]$  be such that   
 $h(T_0) \cdot ( \sup_{t \in [0,T]}\| B(t,\cdot) \|_{\alpha }) $ $  \le 1/4$ (see \eqref{de22}).  
     
We  consider the FBSDE (\ref{fbsde2-wave-notilde})
 with $T = T_0$ {and we denote its solution  again $(\widetilde Y^{x},\widetilde Z^{x})$}. We find the  function $v^{(0)} : [0,T_0] \times H \to K$ according to \eqref{v} with $T= T_0$.
By  \eqref{waveeq-nobad}  we know that  
\begin{gather} \label{see} 
X^1_{\tau} - X^2_{\tau}
= e^{\tau A}(x_1 - x_2) + e^{\tau A}[v^{(0)}(0,x_1) - v^{(0)}(0, x_2)]
\\ \nonumber 
 - [v^{(0)}(\tau,X_\tau^{1}) - v^{(0)}(\tau,X_\tau^{2})]
+{\int_0^\tau e^{(\tau-s)A}[\widetilde Z^{x_1}_s- \widetilde Z^{x_2}_s]\;dW_s,}\;\; \tau \in [0,T_0], 
\end{gather}  
 where $\int_0^\tau e^{(\tau-s)A}[\widetilde Z^{x_1}_s- \widetilde Z^{x_2}_s]\;dW_s = \int_0^\tau e^{(\tau-s)A}[ \nabla^G v^{(0)}(s,X_s^1) - \nabla^G v^{(0)}(s,X_s^2) ]\;dW_s . $

 
By  the regularity properties of $v^{(0)}$, see Theorem  \ref{forse111}, 
Lemma \ref{forse13} and 
Lemma \ref{forse11}, we  get    
 \begin{gather*}
 \vert e^{\tau A}(x_1 - x_2)\vert_K + \vert e^{\tau A}[v^{(0)}(0,x_1) - v^{(0)}(0, x_2)]\vert_K +
\vert v^{(0)}(\tau,X_\tau^{1}) - v^{(0)}(\tau,X_\tau^{2})\vert_K \\ \leq C\vert x_1-x_2\vert_K +\frac{1}{3}\vert X_\tau^1-X_\tau^2\vert_K,\;\;\; \tau \in [0,T_0].
\end{gather*}
 Concerning  the stochastic integral, we have 
 (see  \cite{DZergo} page 57 or \cite{DPsecond}, Section 4.3) 
\begin{gather} \label{itoo}
\E \Big | \int_0^\tau e^{(\tau-s)A}[ \nabla^G v^{(0)}(s,X_s^1) - \nabla^G v^{(0)}(s,X_s^2) ]\;dW_s \Big|^2_K 
\\  \nonumber 
\le  \E  \int_0^{\tau} \| \nabla^G v^{(0)}(s,X_s^1) - \nabla^G v^{(0)}(s,X_s^2) \|_{L_2(U, K)}^2 ds.  
\end{gather}        
 It the sequel  we will prove that  $\E  \int_0^{\tau} \| \nabla^G v^{(0)}(s,X_s^1) - \nabla^G v^{(0)}(s,X_s^2) \|_{L_2(U, K)}^2 ds$ is finite and we will provide a bound for it.  
   
Let us consider a  basis $(e_k) $ in $U$; 
 by the regularity properties of $v^{(0)}$ we get      
  \begin{equation*}
\E  \int_0^{\tau} \| \nabla^G v^{(0)}(s,X_s^1) - \nabla^G v^{(0)}(s,X_s^2) \|_{L_2(U,K)}^2 
ds = \sum_{j \ge 1} \E  \int_0^{\tau}  | \nabla^G_{e_j} v^{(0)}(s,X_s^1) - \nabla^G_{e_j} v^{(0)}(s,X_s^2) |_{K}^2 ds.  
\end{equation*}
Now, using a basis $(f_m)$ in $K$, we write, for any $s \in [0,\tau]$, $\P$-a.s., 
 \begin{gather*} 
   \sum_{j \ge 1} | \nabla^G_{e_j} v^{(0)}(s,X_s^1) - \nabla^G_{e_j} v^{(0)}(s,X_s^2) |_{K}^2 
=  \sum_{j \ge 1} \sum_{m \ge 1} \langle \nabla^G_{e_j} v^{(0)}(s,X_s^1) - \nabla^G_{e_j} v^{(0)}(s,X_s^2), f_m    \rangle_K ^2 
\\  
= \sum_{m \ge 1} \sum_{j \ge 1}   \langle \nabla^G_{e_j} v^{(0)}(s,X_s^1) - \nabla^G_{e_j} v^{(0)}(s,X_s^2), f_m    \rangle_K ^2 =
\sum_{m \ge 1}   \sum_{j \ge 1} [     \nabla^G_{e_j} v^{(0)}_m (s,X_s^1) - \nabla^G_{e_j} v^{(0)}_m (s,X_s^2) ] ^2 
\\ 
= \sum_{m \ge 1}   |     \nabla^G v^{(0)}_m (s,X_s^1) - \nabla^G  v^{(0)}_m (s,X_s^2) |^2_U,   
\end{gather*}      
using  $ v^{(0)}_m = \langle   v^{(0)} , f_m \rangle_K $  and noting that 
$\nabla^G   v^{(0)}_m (s,X_s^1) \in L(U, \R)$ can be identified by the Riesz theorem with a unique element in $U.$    We have obtained
\begin{gather*}  
\sum_{j \ge 1}   \int_0^{\tau}  | \nabla^G_{e_j} v^{(0)}(s,X_s^1) - \nabla^G_{e_j} v^{(0)}(s,X_s^2) |_{K}^2 ds 
=   \int_0^{\tau} \sum_{m \ge 1}   |     \nabla^G v^{(0)}_m (s,X_s^1) - \nabla^G  v^{(0)}_m (s,X_s^2) |^2_U ds
\\
=      \int_0^{\tau} \sum_{m \ge 1} \sup_{|a|_U =1}  |     \nabla^G_a v^{(0)}_m (s,X_s^1) - \nabla^G_a  v^{(0)}_m (s,X_s^2) |^2  ds.
\end{gather*}
Hence we have, $\P$-a.s., 
 \begin{gather*}
 \sum_{j \ge 1}   \int_0^{\tau}  | \nabla^G_{e_j} v^{(0)}(s,X_s^1) - \nabla^G_{e_j} v^{(0)}(s,X_s^2) |_{K}^2 ds
\\  =  \sum_{m \ge 1}   \int_0^{\tau}   \sup_{|a|_U =1}  \Big |  \int_0^1 \nabla^K \nabla^G_a v^{(0)}_m (s,X_s^1 + r (X_s^2 - X_s^1)) \, [X_s^2 - X_s^1]dr
\Big|^2  ds 
\\
=  \int_0^{\tau}  \sum_{m \ge 1}   \sup_{|a|_U =1}  \Big |  \int_0^1 \nabla_{[X_s^2 - X_s^1]}\nabla^G_a v^{(0)}_m (s,X_s^1 + r (X_s^2 - X_s^1)) \, dr
\Big|^2  ds. 
\end{gather*}
   Moreover,  we find, $\P$-a.s., $r \in [0,1]$, $s \in [0, \tau]$, 
 \begin{gather}\label{sss1}
 \sum_{m \ge 1}   \sup_{|a|_U =1}  \Big |  \int_0^1 \nabla_{[X_s^2 - X_s^1]}\nabla^G_a v^{(0)}_m (s,X_s^1 + r (X_s^2 - X_s^1)) \, dr \Big|^2
 \\ \nonumber 
 \le \int_0^1
 \sum_{m \ge 1}   \sup_{|a|_U =1}   | \nabla_{[X_s^2 - X_s^1]}\nabla^G_a v^{(0)}_m (s,X_s^1 + r (X_s^2 - X_s^1)) |^2   \, dr  
 \\ \nonumber 
 \le  C_T |X_s^2 - X_s^1|_K^2  \, \sup_{t \in [0,T]}\| B(t, \cdot )\|_{ C^{\alpha}_b(H, U)}^2. 
\end{gather} 
 In the last inequality we have used Theorem \ref{forse111} with $u= v^{(0)}$ and $k = X_s^2 - X_s^1$.  
 
 Coming back to \eqref{itoo}  we  obtain  
\begin{gather*}
\E \Big | \int_0^\tau e^{(\tau-s)A}[ \nabla^G v^{(0)}(s,X_s^1) - \nabla^G v^{(0)}(s,X_s^2) ]\;dW_s \Big|^2_K 
\\ \le   \sum_{j \ge 1} \E  \int_0^{\tau}  | \nabla^G_{e_j} v^{(0)}(s,X_s^1) - \nabla^G_{e_j} v^{(0)}(s,X_s^2) |_{K}^2 ds
\\
\le   \E  \int_0^{\tau} ds \int_0^1
 \sum_{m \ge 1}   \sup_{|a|_U =1}   | \nabla_{[X_s^2 - X_s^1]}\nabla^G_a v^{(0)}_m (s,X_s^1 + r (X_s^2 - X_s^1)) |^2   \, dr  
 \\ \nonumber 
 \le  C_T \sup_{t \in [0,T]}\| B(t, \cdot )\|_{ C^{\alpha}_b(H, U)}^2 \,   \E  \int_0^{\tau}    |X_s^2 - X_s^1|_K^2   ds.
\end{gather*}
  Starting from \eqref{see} and using the previous estimates, we can  apply the  Gronwall lemma and  obtain 
   \begin{equation}\label{lip1} 
   \sup_{\tau \in [0,T_0]} \E \vert X_\tau^{x_1}-X_\tau^{x_2}\vert^2_K \leq c_T \vert x_1-x_2\vert^2_K. 
  \end{equation}
  If $T_0 <T$  we  consider the FBSDE (\ref{fbsde2-wave-notilde})
 with terminal time $ (2T_0) \wedge T $. We find  $v^{(1)} : [0,(2T_0) \wedge T] \times H \to K$ according to \eqref{bsde2unobis} with $T$ replaced by  $(2T_0) \wedge T$.
By  \eqref{waveeq-nobad}  we obtain in particular 
\begin{gather*} 
  X_\tau^{1} -  X_\tau^{2}
= e^{\tau A}(x_1 - x_2) + e^{\tau A}[v^{(1)}(0,x_1) - v^{(1)}(0, x_2)]
\\ \nonumber 
 - [v^{(1)}(\tau,X_\tau^{1}) - v^{(1)}(\tau,X_\tau^{2})]
+\int_0^\tau e^{(\tau-s)A}[ \nabla^G v^{(1)}(s,X_s^1) - \nabla^G v^{(1)}(s,X_s^2) ]\;dW_s, \;\; \tau \in [T_0,(2T_0) \wedge T]. 
\end{gather*} 
Arguing as before, we  get, for 
 $\tau \in [T_0,(2T_0) \wedge T]$,  
$$ 
 \vert e^{\tau A}(x_1 - x_2)\vert_K +\vert e^{\tau A}[v^{(1)}(0,x_1) - v^{(1)}(0, x_2)]\vert_K +
\vert v^{(1)}(\tau,X_\tau^{1}) - v^{(1)}(\tau,X_\tau^{2})\vert_K $$$$ \leq C_T\vert x_1-x_2\vert_K +\frac{1}{3}\vert X_\tau^1-X_\tau^2\vert_K
$$
and 
\begin{gather*}
\E \Big | \int_0^\tau e^{(\tau-s)A}[ \nabla^G v^{(1)}(s,X_s^1) - \nabla^G v^{(1)}(s,X_s^2) ]\;dW_s \Big|^2_K  
 \\ 
 \le   C_T \sup_{t \in [0,T]}\| B(t, \cdot )\|_{ C^{\alpha}_b(H, U)}^2 \int_0^{\tau} \E   |X_s^1 - X_s^2 |^2_K ds
\\
=  C_T \sup_{t \in [0,T]}\| B(t, \cdot )\|_{ C^{\alpha}_b(H, U)}^2  \Big (   \int_{0}^{T_0 } \E   |X_s^1 - X_s^2 |^2_K ds 
+
\int_{T_0 }^{\tau} \E   |X_s^1 - X_s^2 |^2_K ds 
 \Big)
\\ \le C_T \sup_{t \in [0,T]}\| B(t, \cdot )\|_{ C^{\alpha}_b(H, U)}^2  \Big (    
  c_T T_0 \, |x_1 - x_2|^2_K + \int_{T_0 }^{\tau} \E   |X_s^1 - X_s^2 |^2_K ds  \big).
\end{gather*}
We have obtained, for $\tau \in  [T_0,(2T_0) \wedge T]$,
\begin{gather*}
\E |X^1_{\tau} - X^2_{\tau}|^2_K \le C_T' |x_1 - x_2|^2_K  +  C_T'
\int_{T_0 }^{\tau} \E   |X_s^1 - X_s^2 |^2_K ds.
\end{gather*}
By the Gronwall lemma we find
  $ \sup_{\tau \in [T_0,  (2T_0) \wedge T ]} \E \vert X_\tau^{x_1}-X_\tau^{x_2}\vert^2_K \leq c_T \vert x_1-x_2\vert^2_K.$ 
Proceeding in this way, in finite steps, we  get \eqref{lip-dependence}.  
\end{proof}

\appendix

\section{Appendix: an estimate on the minimal control energy 
for the controlled wave equation}

\subsection{The control system in $K$}

Recall that we are  considering a positive self-adjoint operator ${\Lambda}$ on a real separable Hilbert space ${U}$, i.e., ${\Lambda} : \mathcal{D}\left(  {\Lambda}\right)  \subset {U} \to {U}$.  Moreover ${\Lambda}^{-1}: U \to U$ is of trace class. The classical  Hilbert space for the abstract wave equation is 
\[
K =\mathcal{D} (  {\Lambda}^{\frac{1}{2}} )  \times  {U} = V \times U   
\]
 endowed with the inner product 
 $\left\langle \left(
\begin{array}
[c]{c}%
y_{1}\\
z_{1}%
\end{array}
\right)  ,\left(
\begin{array}
[c]{c}%
y_{2}\\
z_{2}%
\end{array}
\right)  \right\rangle _{K}     =\left\langle {\Lambda}^{\frac{1}{2}}
y_{1},{\Lambda}^{\frac{1}{2}}y_{2}\right\rangle_{{U} } +\left\langle z_{1},z_{2}\right\rangle _{ {U}  }. 
$  
 This space is also denoted by $V \oplus U$. 
We also   introduce 
\begin {equation} \label{rit11}
\mathcal{D}\left(  A\right)  =\mathcal{D}\left(  {\Lambda}\right)
\times\mathcal{D}\left(  {\Lambda}^{\frac{1}{2}}\right)  ,\text{ \ \ \ \ }%
A\left(
\begin{array}
[c]{c}%
y\\
z 
\end{array}
\right)  =\left(
\begin{array}
[c]{cc}%
0 & I\\
-{\Lambda} & 0
\end{array}
\right)  \left(
\begin{array}
[c]{c}%
y\\
z
\end{array}
\right)  ,\text{ \ for every }\left(
\begin{array}
[c]{c}%
y\\ 
z
\end{array}
\right)  \in\mathcal{D}\left(  A\right)  
\end{equation}
 (cf. \cite{krabs}).      
The operator $A$ is the generator of the unitary  group $e^{tA}$ on $K$
\[
e^{tA}\left(
\begin{array}
[c]{c}%
y\\  
z
\end{array}
\right)  =\left(
\begin{array}
[c]{cc}%
\cos\sqrt{{\Lambda}}t & \frac{1}{\sqrt{{\Lambda}}}\sin\sqrt{{\Lambda}}t\\
-\sqrt{{\Lambda}}\sin\sqrt{{\Lambda}}t & \cos\sqrt{{S}}t
\end{array}
\right)  \left(
\begin{array} 
[c]{c}%
y\\
z
\end{array}
\right)  ,\text{ \ \ }t\in\mathbb{R}.
\]
For the study of stochastic wave equations   we  also need to consider the larger space  
$$
 H =  U \times V', 
$$
where  $V' $ is the dual space of $V$.
 We can extend $A $ on $H$: 
\begin {equation} \label{rit13a}
\mathcal{D}\left(  A\right)  = K =V 
\times {U}  ,\text{ \ \ \ \ }%
A\left(
\begin{array}
[c]{c}%
y\\
z
\end{array}
\right)  =\left(
\begin{array}
[c]{cc}%
0 & I\\
-  \Lambda & 0
\end{array}
\right)  \left(
\begin{array}
[c]{c}%
y\\
z
\end{array}
\right)  ,\text{ \ for every }\left(
\begin{array}
[c]{c}%
y\\
z
\end{array}
\right)  \in\mathcal{D}\left(  A\right), 
\end{equation}
and similarly we can  extend $e^{tA} : H \to H$, $t \ge 0.$  The  control system on $K$ we consider is the following one 
\begin{equation}
\left\{
\begin{array}
[c]{l}%
\overset{\cdot}{w}\left(  t\right)  =Aw\left(  t\right)  +Gu\left(  t\right)
,\\
w\left(  0\right)  =h \in K,
\end{array}
\right.  \label{null1}
\end{equation} 
where $u \in L^2_{loc}(0,\infty; U) $;  $G : U \to K$;   for any $a \in U$,   $ G a  =\left(
 \begin{array}
 [c]{c}%
 0\\
 a
 \end{array}   
 \right).   $
  The solution  of \eqref{null1} is given by 
\begin{gather*}   
 w(t) = e^{tA} h + \int_0^t e^{(t-s)A} Gu(s)ds
\end{gather*}
and takes value in  $K$; in particular, for any $T>0$, $ w \in C([0,T]; K)$ by     Lemma 3.1.5 in \cite{CW} .

Let $t>0$ and  define the operator $\mathcal L_t$,  $\mathcal L_t u:=\displaystyle\int_0^te^{(t-s)A}Gu(s)\,ds$, acting from $L^2(0,t;U)$ with values in $K \subset H$. We know that 
\begin{equation}\label{ffr}
  { \widetilde Q}_t^{1/2} (K) = Im \mathcal L_t = \mathcal L_t (L^2(0,t;U))
\end{equation}
(see, for instance, page 253 in \cite{Za}). 
 We are  considering 
 $\widetilde Q_t : K \to K $:
\begin{gather*}
 \widetilde Q_t  = \int_{0}^{t}e^{\left(  t-s\right)  A}  
 \left(
\begin{array}
[c]{cc}%
0 & 0\\
0 & I_U
\end{array}
\right)
     e^{\left(  t-s\right)  A^*}   
 ds 
\end{gather*}
 which is different from  $Q_t: H \to H$ considered  in \eqref{qttt}: 
 \begin{gather*}
 Q_t  = \int_{0}^{t}e^{\left(  t-s\right)  A}  
 \left(
\begin{array} 
[c]{cc}%
0 & 0\\
0 & \Lambda^{-1} 
\end{array}
\right)
     e^{\left(  t-s\right)  A^*}  
 ds.   
\end{gather*} 
Moreover, it is known that   the controlled abstract wave equation \eqref{null1}  is exactly controllable in $K$  (see \cite{DPZab} or  page 367  in \cite{BD1}).   
 
 Hence  ${ \widetilde Q}_t^{1/2} (K) = Im \mathcal L_t = K$. In particular the control system  is null controllable in $K$ (i.e., for any $h \in K$, $t>0$, there exists a control function $u: [0,t] \to  U$ such that the corresponding solution $w$ verifies $w(t)=0$).  
  We can define  $ \mathcal{E}_{C}\left(  t,h\right) $  as the infimum of 
$$
\Big (\int_0^t |u(r)|^2_{U}dr\Big)^{1/2} 
$$
over all controls  $u \in L^2(0,t; U )$ driving the solution $w$ from $h $ to $0$ in time $t$. By Theorem 15.3 in \cite{Za} it follows that 
\begin{equation}\label{dfff}
\mathcal{E}_{C}\left(  t, h\right) = |{ \widetilde  Q}_t^{-1/2} e^{tA} h|_K,\;\;\; h \in K.
\end{equation} 
  We are interested in the behaviour of  $|{ \widetilde Q}_t^{-1/2} e^{tA} h|_K$
 as $t \to 0^+$ (for related results we refer to  \cite{Tr} and  \cite{AL}).  
 The first estimate in the next result is known. We provide a self-contained proof using special control functions as in  \cite{Tr} (see also Chapter 1 in \cite{Za}).   
Recall that  $k = \left(  
 \begin{array}
 [c]{c}%
 v\\  
 a
 \end{array} 
 \right) \in K$ with $v\in V$ and $a \in U$. 
  \begin{theorem}
\label{ci1} Let $T_0>0$. (i) There
exists a positive constant $C_{T_0} >0$ such that, for any $v \in V$, we have
\begin{equation}\label{energy-G1}
\Big| { \widetilde Q}_T^{-1/2} e^{TA} \left(
 \begin{array}  
 [c]{c}%
 v\\
 0
 \end{array} 
 \right) \Big|_K \le   \frac {C_{T_0} |v|_V}{ T^{\frac{3}{2}}},\;\;     
 \;\; T \in  (0, T_0], 
\end{equation}
(ii) There
exists a positive constant $C >0$ such that,  for any $a \in U$,
 setting $ Ga = \left( 
\begin{array}  
[c]{c}%
 0\\   
a
\end{array}
\right) $
\begin{equation}\label{energy-G}
| { \widetilde Q}_T^{-1/2} e^{TA} Ga|_K \le     \frac {C |a|_U}{ T^{\frac{1}{2}}},    
 \;\; T >0.
\end{equation}
The previous estimates imply that there exists $C_{T_0}>0$ such that
\begin{equation}\label{cont2}
|{ \widetilde Q}_T^{-1/2} e^{TA} |_{L(K,K)}  \le  {C_{T_0}}\, {T^{-3/2}},\;\;\; T \in (0,T_0].  
\end{equation} 
\end{theorem}    
\dim (i)  Let $v \in V$   and $T>0$. We  consider $f(t) = t^2 (T-t)^2$ and 
$$ 
 \phi(t) = \frac{f(t)}{ \int_0^T f(s)ds},\;\;\; t \in [0,T]. 
$$
Note that $\phi(0) = \phi(T)$, $\int_0^T \phi(s) ds =1$ and  there exists $C>0$ (independent of $T>0$) such that $|\phi (t)| 
\le \frac{C}{T}$ and $|\phi'(t)| \le \frac{C}{T^2}$, $t \in [0,T].$ Let $\psi : [0,T] \to K$,
$$ 
\psi (t)= \left (\begin{array}  
[c]{c}
\psi_1(t)\\
 \psi_2(t)
\end{array}
\right) = - \phi(t) \, e^{tA} k =  - 
\left(
\begin{array}
[c]{c}
\phi(t)\cos(\sqrt{{\Lambda}}t)v
\\
- \phi(t){\sqrt{{\Lambda}}}\sin(\sqrt{{\Lambda}}\, t) \, v 
\end{array}
\right),\;\; t \in [0,T].
$$  
Using also the derivative $\psi_1'$ we introduce the control 
 \begin {equation} \label{uu1}
u(t) = \psi_2(t) + \psi_1'(t) \in {U},\;\; t \in [0,T].
 \end{equation}
 We show that it transfers $k$ to $0$ at time $T.$
We have
\[
\int_{0}^{T}e^{\left(  T-s\right)  A} G u(s) ds = 
\int_{0}^{T}e^{\left(  T-s\right)  A}   \left (\begin{array}
[c]{c}
 0 \\
 \psi_2(s)  
\end{array}
\right) ds  + \int_{0}^{T}e^{\left(  T-s\right)  A}  G   
 \psi_1'(s)  
 ds. 
\]
Since $  G   
 \psi_1'(s)  = 
\left(
\begin{array}
[c]{c}
 0
 \\ \psi'_1(s)
\end{array}
\right)   
$ is continuous from $[0,T] $ with values in $K$,  integrating by parts we get
\[
  \int_{0}^{T}e^{\left(  T-s\right)  A}  G   
 \psi_1'(s)   =  \int_{0}^{T}e^{\left(  T-s\right)  A}  A G   
 \psi_1(s)  
 ds
=  \int_{0}^{T}e^{\left(  T-s\right)  A} \left(
\begin{array}
[c]{cc}%
0 & I\\
-{{\Lambda} \,} & 0
\end{array}
\right)   \left(
\begin{array}
[c]{c}
 0
 \\ \psi_1(s)
\end{array}
\right)       
 ds 
\]
$$
= \int_{0}^{T}e^{\left(  T-s\right)  A}    \left(
\begin{array}
[c]{c}
 \psi_1(s) \\ 0
\end{array}
\right)       
 ds. 
$$
  Hence we find 
$$
\int_{0}^{T}e^{\left(  T-s\right)  A} G u(s) ds = 
- \int_{0}^{T} \phi(s) e^{\left(  T-s\right)  A} 
e^{sA} k \, ds  = - e^{TA} k.
$$
Now we compute the energy of the control $u$:
$
 \int_0^T |u(s)|^2_U ds.    
$
Note that 
$$
\int_0^T |\psi_2(t)|^2_U dt = \int_0^T \phi(t)^2    \, |  {\sqrt{{\Lambda}}}\sin(\sqrt{{\Lambda}}\, t) \,    v|^2_U dt \le \frac{c\,  |v|^2_V}{T}; 
$$    
$$
\int_0^T |\psi_1'(t)|^2_U dt = \int_0^T \Big|  - \phi(t) \sqrt{\Lambda}  \sin(\sqrt{{{\Lambda} \,}}t)v \,  - \, \phi'(s)   \, \cos(\sqrt{{{\Lambda} \,}}t)v  \Big|^2_U
dt
$$$$  
\le c   \Big( \frac{  |v|^2_V}{T} +    \int_0^T \Big| \phi'(t)  \cos(\sqrt{{{\Lambda} \,}}t)v    \Big|^2_U
dt \Big)
\le  \frac{ c_{T_0} \,  |v|^2_V}{T^3}, 
$$  
where $c_{T_0}$ is independent of $T$ and $v$.  Collecting the previous estimates we   obtain
$$
 | { \widetilde Q}_T^{-1/2} e^{TA} k|_K  \le \Big(\int_0^T |u(s)|^2_U ds \Big)^{1/2} \le \frac{C_{T_0}   \, |v|_V}{ \sqrt{T^3}}, \;\; T \in (0,T_0]. 
$$ 
(ii) Let us  fix $T>0$ and $k = \left (\begin{array}
[c]{c}
0\\ 
a
\end{array}
\right) $ with $a \in {U}$. Consider  $\phi(t)$ as before. Define 
    $$
\psi (t)= \Big (\begin{array}
[c]{c}
\psi_1(t)\\  
 \psi_2(t)
\end{array}
 \Big ) = - \phi(t) \, e^{tA} k =   - \Big(
\begin{array}
[c]{c}
 \phi(t) \frac{1}{\sqrt{{{\Lambda} \,}}}\sin(\sqrt{{{\Lambda} \,}}t) \, a \\
 \phi(t) \cos(\sqrt{{{\Lambda} \,}}t)a
\end{array}
\Big) ,\;\; t \in [0,T]. 
$$
Using also the derivative $\psi_1'$ we introduce as in the first part of the proof  the control 
 $u(t) = \psi_2(t) + \psi_1'(t) \in {U},$ $ t \in [0,T].$ 
 It transfers $k$ to $0$ at time $T$ since 
 \begin{gather*}
  \int_{0}^{T}e^{\left(  T-s\right)  A}  G   
 \psi_1'(s) ds  =  \int_{0}^{T}e^{\left(  T-s\right)  A}  A G   
 \psi_1(s)  
 ds
=  \int_{0}^{T}e^{\left(  T-s\right)  A} \left(
\begin{array}
[c]{cc}%
0 & I\\
-{{\Lambda} \,} & 0
\end{array}
\right)   \left(
\begin{array}
[c]{c}
 0
 \\ \psi_1(s)
\end{array}
\right)       
 ds 
\\
= \int_{0}^{T}e^{\left(  T-s\right)  A}    \left(
\begin{array}
[c]{c}
 \psi_1(s) \\ 0
\end{array} 
\right)       
 ds  
 \end{gather*} 
 and      
$$
\int_{0}^{T}e^{\left(  T-s\right)  A} G u(s) ds = 
- \int_{0}^{T} \phi(s) e^{\left(  T-s\right)  A} 
e^{sA} k \, ds  = - e^{TA} k.
$$
We compute the energy of the control $u$:
$
 \int_0^T |u(s)|^2_U ds.   
$
First note that
$$
\int_0^T |\psi_2(t)|^2_U dt = \int_0^T \phi^2(t)     \, |\cos(\sqrt{{{\Lambda} \,}}t)a |^2_U dt \le \frac{c\,  |a|^2_U}{T}.   
$$
On the other hand   we get
$$
\int_0^T |\psi_1'(t)|^2_U dt = \int_0^T \Big|  \phi(t)  \cos(\sqrt{{{\Lambda} \,}}t)a  + \phi'(t)  \frac{1}{\sqrt{{{\Lambda} \,}}}\sin(\sqrt{{{\Lambda} \,}}t) \, 
a \Big|^2_U
dt
$$$$ 
\le c \Big ( \frac{  |a|^2_U}{T} +   |a|^2_U \int_0^T \Big| \phi'(t) t \frac{1}{\sqrt{{{\Lambda} \,} } t }\sin(\sqrt{{{\Lambda} \,}}t)  \Big|^2_U
dt \Big)
\le  \frac{ c' |a|^2_U}{T}, 
$$
where $c'$ is independent of $T$ and $a$. Collecting the previous estimates we   obtain
$$
 | { \widetilde Q}_T^{-1/2} e^{TA} Ga|_K   \le \Big(\int_0^T |u(s)|^2_U ds \Big)^{1/2} \le \frac{C \, |a|_U}{ \sqrt{T}}, \;\; T>0. \qed
$$ 

\subsection{The control system in $H$} \label{refer}  
  
 Here we consider  the previous  control system \eqref{null1}   in 
 $H$, i.e., we take the initial condition $h \in H$. 
 The control function $u $ still belongs to 
  $ L^2_{loc}(0,\infty; U) $ with   $G : U \to K \subset H$. Let $t>0$.  We still  have   
  \begin{equation}\label{ffr2}
  {Q}_t^{1/2} (H) = Im \mathcal L_t = \mathcal L_t (L^2(0,t;U))
\end{equation}
 where  $\mathcal L_t$  is the same operator considered in \eqref{ffr} but we have to consider $Q_t$ instead of ${ \widetilde Q}_t$.
 
  It follows that  ${Q}_t^{1/2} (H) =K$. Moreover, any $k \in K$
   we  define  $ \mathcal{E}_{C}\left(  t,k\right) $  as we have done for the control system in $K$.  By Theorem 15.3 in \cite{Za} it follows that
   \begin{equation}\label{dfff1}
\mathcal{E}_{C}\left(  t, k \right) = |{ Q}_t^{-1/2} e^{tA} k|_H
\end{equation} 
  for any $k \in K$. By \eqref{dfff} and \eqref{ffr2} we infer that 
 \begin{equation}\label{abc} 
  |{ \widetilde  Q}_t^{-1/2} e^{tA} k|_K =
  |{ Q}_t^{-1/2} e^{tA} k|_H,\;\;\; k \in K, \; t>0. 
\end{equation} 
It follows  that 
  $Q_t^{-1/2} e^{tA} $      
 belongs to $L(K,H)$ and, applying Theorem \ref{ci1},
  for any  $T>0$, 
 there exists $c= c_T>0$ such that  for any $t \in (0,T]$ 
  we have    
\begin{gather} \label{se19f}   
 |Q^{-1/2}_t e^{tA} k|_H \le   \frac{c}{t^{3/2}} |k|_K,\; \;\; k \in K = V \times U;   
\\
 \nonumber  |Q^{-1/2}_t e^{tA} Ga|_H     \le \frac{c}{t^{1/2}} |a|_U,\; \;\; a \in  U.  
 \end{gather}

 \begin{remark} \label{cont}{\em 
   We  stress that the previous  control system \eqref{null1} when considered  in 
 $H$   is not null controllable in $H$ at any time $t>0$. 
  This follows by the group property of $(e^{tA})$, the fact that $e^{tA}(K)=K$  and  noting that   ${Q}_t^{1/2} (H) =K$, $t>0$.
 
 The lack of null controllability can also be deduced by  applying  the argument in 
 page 180 of  \cite{DZergo}. 
  Indeed    $G: U \to H$  is a Hilbert-Schmidt operator because  
 $$\sum_{k \ge 1} |G e_k|^2_H = \sum_{k \ge 1} \Big |\Big(
 \begin{array}
 [c]{c}%
 0\\  
 e_k
 \end{array} 
 \Big) \Big |^2_H = \sum_{k \ge 1} |  \Lambda^{-1/2} e_k|^2_U < \infty.
 $$
 Hence if $h \in H$ in general we cannot transfer $h$ to 0 at time $t>0$ by a control $u \in  L^2(0,t;U)$. We could transfer $ h$ to 0 by a control
 $
 u \in  L^2(0,t;V') 
$
but then we should change $G$ with $\tilde G: V' \to H$, $\tilde G v' = \Big(
 \begin{array}
 [c]{c}%
 0\\  
 v'
 \end{array} 
 \Big)$ (recall that $G$ is given  in \eqref{wa1}).    }
 \end{remark}


\begin{thebibliography}{11}
{ \small 


\bibitem{AL} G. Avalos and I.  Lasiecka,  Optimal blowup rates for the minimal energy null control of the strongly damped abstract wave equation. Ann. Sc. Norm. Super. Pisa Cl. Sci. (5) 2 (2003), no. 3, 601-616. 


 
\bibitem{BD1}  A. Bensoussan, G. Da Prato,  M. C. Delfour
 and S. K. Mitter, 
  Representation and control of infinite-dimensional systems.
Second edition,   Systems \& Control: Foundations \&
   Applications, Birkh\"auser, Boston, 2007.

   

   
\bibitem{Bre} Z. Brze\'zniak,   M.  Ondrej\'at, J. Seidler,   Invariant measures for stochastic nonlinear beam and wave equations. J. Differential Equations 260 (2016), 4157-4179. 



\bibitem{BP} R. Buckdahn, S. Peng,   Stationary backward stochastic differential equations
 and associated partial differential equations, Prob. Theory Rel. Fields 115 (1999), 383-399. 

 
 \bibitem{canndap} P. Cannarsa, G. Da Prato, Infinite-dimensional elliptic equations with Holder-continuous coefficients, Adv. Differ. Equ. 1 (1996) 425-452.  
  
\bibitem{CH} R. Courant and D. Hilbert, Methods of Mathematical Physics. Vol. I, Interscience Publishers, New York, 1953.


\bibitem {CW}R.F. Curtain, H.J. Zwart, An introduction to
Infinite-Dimensional Linear Systems Theory, Texts in Applied Mathematics 21,
Springer Verlag, New York, 1995.




  

 \bibitem{DPF} G. Da Prato, F. Flandoli, 
  Pathwise uniqueness for a class of SDEs in Hilbert spaces and applications. J. Funct. Anal. 259 (2010), no. 1, 243–267.


\bibitem{DPFR}  G. Da Prato, F. Flandoli, E. Priola, M. R\"ockner,  Strong uniqueness for stochastic evolution equations in Hilbert spaces perturbed by a bounded measurable drift. Ann. Probab. 41 (2013),  3306-3344.


 

\bibitem{DPFPR15}  G. Da Prato, F. Flandoli, E. Priola, M. R\"ockner, 
  Strong uniqueness for stochastic evolution equations with unbounded measurable drift term.
 {J. Theoret. Probab.} {28} (2015) 1571--1600.

 
\bibitem{DFRV} G. Da Prato, F.  Flandoli, M.  R\"ockner, A. Veretennikov, Strong uniqueness for SDEs in Hilbert spaces with non-regular drift,    Ann. Probab. 44 (2016), no. 3, 1985-2023. 
 
%

\bibitem{DPZab} G. Da Prato, A. J. Pritchard and J. Zabczyk, 
  On minimum energy problems, Control Theory
 Center Report No. 156, Warwick University, England.   


\bibitem{DZergo} G. Da Prato and J. Zabczyk, Ergodicity for infinite dimensional systems, London Mathematical Society Lecture Notes, 229, Cambridge University Press,
1996.
  

\bibitem {DP3}G. Da Prato and J. Zabczyk, Second order partial
 differential equations in Hilbert spaces. London Mathematical Society Note
 Series, 293, Cambridge University Press, Cambridge, 2002.

 \bibitem {DPsecond}
G. Da Prato, J. Zabczyk, Stochastic equations in infinite dimensions. 
Second edition. Encyclopedia of Mathematics and its Applications, 152. Cambridge University Press, Cambridge, 2014. 
 
 

\bibitem{DT}  A. Debussche, Y. Tsustumi,  1D quintic nonlinear equation with white noise dispersion, Journal de Math\'ematiques Pures et Appliqu\'es, 96 (2011) 363-376. 

\bibitem{DFV} F. Delarue, F. Flandoli, D. Vincenzi, Noise prevents collaps of Vlasov-Poisson point charges,
Comm. Pure Appl. Math., 67 (2014)  1700–1736.

 



 
\bibitem{FF13} E. Fedrizzi and F. Flandoli,  Noise prevents singularities in linear transport equations, J. Funct. Anal. 264, (2013), pp 1329-1354.

\bibitem{FGP} F. Flandoli, M. Gubinelli, E. Priola, 
 Well-posedness of the transport equation by stochastic perturbation, Invent. Math. 180 (2010), no. 1, 1-53. 

\bibitem{Fl} F. Flandoli,  {Random perturbation of PDEs and
fluid dynamic models}, Saint Flour Summer School Lectures,  Lecture Notes
in Math. 2015, Springer, Berlin.

\bibitem{fute} M. Fuhrman, G. Tessitore, Nonlinear Kolmogorov
equations in infinite dimensional spaces: the backward stochastic
differential equations approach and applications to optimal
control. Ann. Probab. 30 (2002), no. 3, 1397--1465.

\bibitem {fute-infor}M. Fuhrman and G. Tessitore, (2004), Infinite horizon
backward stochastic differential equations and elliptic equations in Hilbert
spaces. Ann. Probab. 32, no. 1B, 607--660.




\bibitem{G} F. Gozzi,
\newblock Regularity of solutions of second order
Hamilton-Jacobi equations in Hilbert spaces and applications to a control
problem,
Comm. Partial Differential Equations {20} (1995) no.5-6,  775-826.

\bibitem{GuaTess}
{ G. Guatteri, G. Tessitore}, 
\newblock On the backward stochastic Riccati equation in infinite dimensions. SIAM J. Control Optim. 44 (2005), no. 1, 159-194. 


\bibitem{Gua1}  
G. Guatteri, On a Class of Forward-Backward Stochastic Differential Systems
in Infinite Dimensions. J. of Appl. Math. and Stoc. Anal.,
 (2007), Article ID 42640, 33 pages.


\bibitem {GP}   I. Gy\"ongy. E. Pardoux, On the regularization effect of space-time white noise on quasi-linear parabolic partial differential equations,
Probability Theory and Related Fields 97  (1993)  211-229


 \bibitem{HuPeng} Y. Hu, S. Peng, Adapted solution of a backward semilinear stochastic evolution equation,
  Stochastic Anal. Appl., 9, (1991), no. 4, 445-459.

  
\bibitem{kim}   J. U. Kim, On a Stochastic Plate Equation, 
Appl. Math. Optim., 44,  (2001)  33-48.
  

\bibitem{krabs}  W.  Krabs, W,
 On time-minimal distributed control of vibrating systems governed by an abstract wave equation.
 Appl. Math. Optim. 13 (1985), no. 2, 137-149. 
   
  
\bibitem{LR15} W. Liu and M. R\"ockner, 
Stochastic Partial Differential Equations: An Introduction,
Universitext  Springer 2015.  
 
 


 \bibitem{Mas} F. Masiero, Semilinear Kolmogorov equations and
 applications to stochastic optimal control, Appl. Math. Optim.,
 51 (2005), pp.~201--250.
%
    \bibitem{Mas1} F. Masiero, Regularizing properties for transition semigroups
    and semilinear parabolic equations in Banach spaces.
    Electron. J. Probab.  12  (2007), no. 13, 387--419.

    
\bibitem{MaPr}    F.  Masiero, E.  Priola, 
 Well-posedness of semilinear stochastic wave equations with H\"lder continuous coefficients. 
J. Differential Equations 263 (2017), no. 3, 1773-1812. 
     
    
\bibitem{MN} L. Mytnik, E. Neuman,
Pathwise uniqueness for the stochastic heat equation with Hölder continuous drift and noise coefficients. 
Stochastic Process. Appl. 125 (2015), no. 9, 3355-3372. 


%
%

\bibitem {Ondre04} M. Ondrej\'at,
Uniqueness for stochastic evolution equations in Banach spaces.
Dissertationes Math. (Rozprawy Mat.) 426 (2004). 

     
 \bibitem{PZ}S. Peszat and J. Zabczyk, {Strong Feller property and
 irreducibility for diffusions on Hilbert spaces, }Ann. Probab. 23 (1995), no.
 1, 157--172.


 
\bibitem{prizambo} E.   Priola, L.  Zambotti,
New optimal regularity results for infinite-dimensional elliptic equations. 
Boll. Unione Mat. Ital. Sez. B Artic. Ric. Mat. (8) 3 (2000), no. 2, 411-429.

  

\bibitem{ShigekawaBook} I.  Shigekawa,  Stochastic Analysis. {American Mathematical Society}, Providence, 2004.


\bibitem{Tr} R. Triggiani,  Constructive steering control functions for linear systems and abstract rank conditions, 
Journal of Optimization Theory and Applications 1992, Vol. 74, Issue 2, pp 347-367.


\bibitem{TW} 
 M. Tucsnak and G. Weiss, Observation and Control 
 for Operator Semigroups, Birkhauser-Verlag, Basel, 2009.





\bibitem{Ver} A. J.  Veretennikov,
 { Strong solutions and explicit formulas
 for solutions of stochastic integral equations,} {Mat. Sb.},
(N.S.) {111 (153)} (3) (1980), 434-452.

\bibitem{WZ} F. Y. Wang, X. Zhang, Degenerate SDEs in Hilbert spaces with rough drifts. Infin. Dimens. Anal. Quantum Probab. Relat. Top. 18 (2015),   25 pp. 

\bibitem{Za} J. Zabczyk,    Mathematical control theory-an introduction. Second edition. Systems \& Control: Foundations \& Applications. 
 Birkh\"auser/Springer,  2020. 

   } 
\end{thebibliography}
\end{document}